\newcommand{\fqt}{\zb{F}_{q^{t-1}}}
\newcommand{\fqu}{\zb{F}_q^*}
\newcommand{\fq}{\zb{F}_q}
\newcommand{\fqq}[1]{\zb{F}_{q^{#1}}}
\newcommand{\fqc}{\quer{\zb{F}}_q}
\newcommand{\gdw}{\Leftrightarrow}
\newcommand{\imp}{\Rightarrow}
\newcommand{\ohne}[1]{\backslash\men{#1}}
\newcommand{\absw}[1]{\left\{\begin{array}{ll}#1\end{array}\right.}
\newcommand{\inn}{\in\zb{N}}
\newcommand{\quer}[1]{\overline{#1}}
\newcommand{\zb}[1]{\mathbb{\uppercase{#1}}}
\newcommand{\fsum}[2]{\sum \limits_{#1}^{#2}}
\newcommand{\fcap}[2]{\bigcap \limits_{#1}^{#2}}
\newcommand{\kl}[1]{\left(#1\right)}
\newcommand{\men}[1]{\left\{#1\right\}}
\newcommand{\anz}[1]{\left|#1\right|}
\newcommand{\ubr}[2]{\underset{#1}{\underbrace{#2}}}
\newcommand{\kw}[1]{\frac{1}{#1}}
\newcommand{\inv}{^{-1}}
\newcommand{\hide}[1]{}
\DeclareMathOperator{\Aut}{Aut}
\DeclareMathOperator{\Char}{char}
\let\NG\relax
\DeclareMathOperator{\NG}{NG}
\DeclareMathOperator{\ex}{ex}
\newcommand{\ep}{\varepsilon}
\newcommand{\isom}{\simeq}
\newtheorem{lemma}{Lemma}
\newtheorem{corollary}{Corollary}
\newtheorem{theorem}{Theorem}
\newtheorem{conjecture}{Conjecture}
\newtheorem{claim}{Claim}
\title{Exploring Projective Norm Graphs}
\author{Tomas Bayer\footnote{Freie Universit\"at Berlin, Institut f\"ur Mathematik (Arnimallee 7, DE-14195 Berlin), tomas.bayer@fu-berlin.de } \and Tam\'as
  M\'esz\'aros\footnote{Freie Universit\"at Berlin, Institut f\"ur Mathematik, (Arnimallee 7, DE-14195 Berlin), tamas.meszaros@fu-berlin.de, Supported by a DRS
    Fellowship of Freie Universit\"at Berlin} \and Lajos
  R\'onyai\footnote{Institute for Computer Science and Control,
    Hungarian Academy of Sciences (L\'agym\'anyosi u. 11, HU-1111 Budapest) and Budapest University of Technology and Economics (Egry J\'ozsef u. 1, HU-1111 Budapest), ronyai.lajos@sztaki.mta.hu} \and Tibor Szab\'o\footnote{Freie Universit\"at Berlin, Institut f\"ur Mathematik (Arnimallee 7, DE-14195 Berlin), szabo@zedat.fu-berlin.de, Supported by GIF grant G-1347-304.6/2016.}}
\begin{document}

\maketitle

\begin{abstract} 
The projective norm graphs $\NG(q,t)$ provide tight constructions for
the Tur\'an number of complete bipartite graphs $K_{t,s}$ with
$s>(t-1)!$. In this paper we determine their automorphism group and
explore their small subgraphs. To this end we
give quite precise estimates on the number of solutions of certain equation
systems involving norms over finite fields.
The determination of the largest integer $s_t$, such that the
projective norm graph $\NG(q,t)$ contains $K_{t,s_t}$ for all large
enough prime powers $q$ is an important open question with far-reaching general
consequences. The best known bounds, $t-1\leq s_t \leq (t-1)!$, are far
apart for $t\geq 4$.
Here we prove that $\NG(q,4)$ does contain (many) $K_{4,6}$ for any
prime power $q$ not divisble by $2$ or $3$. This greatly extends recent work
of Grosu, using a completely different approach.  
Along the way we also count the copies of any fixed $3$-degenerate subgraph, and find that
projective norm graphs are quasirandom with respect to this parameter.
Some of these results also extend the work of Alon and Shikhelman on generalized Tur\'an numbers.
Finally we also give a new, more elementary proof for the $K_{4,7}$-freeness of $\NG(q,4)$.
\vspace{0.2cm}

\noindent \textbf{Keywords:} Tur\'an problem, quasirandomness, norm
graphs, finite fields, norm
equations, character sums, automorphism group\\
\textbf{Mathematics Subject Classification (2010):} 05C25, 05C35, 11T06, 11T24
\end{abstract}

\section{Introduction}

Among both the earliest and most thoroughly studied problems in
extremal graph theory are \emph{Tur\'an}-type problems. 
%For simple graphs $G,H$ denote by $X_H(G)$ the number of copies of
%$H$ in $G$, and call $G$ \emph{$H$-free} if $X_H(G)=0$. 
Given a graph $H$ and integer $n\inn$, the {\em Tur\'an number of $H$}, denoted by $\ex(n,H)$, is
the maximum number of edges 
a simple graph on $n$ vertices may have without containing a subgraph
isomorphic to $H$.
%, i.e 
%
%\begin{equation*}
%\ex(n,H)=\max\men{e(G)\mid G \text{ is a $H$-free simple graph with } v(G) = n}
%\end{equation*}
%
The very first result about Tur\'an numbers is Mantel's theorem~\cite{Man} from 1907,
stating that $\ex(n,K_3)=\lfloor\frac{n^2}{4}\rfloor$. In 1941 Tur\'an~\cite{Tu}
determined $\ex(n,K_t)$ exactly for every $t\geq 3$ and identified the
unique extremal examples.  For arbitrary $H$, a corollary of the Erd\H{o}s-Stone
Theorem~\cite{ESt}, formulated by Erd\H{o}s and Simonovits~\cite{ESi},
gives  
\begin{equation*}\label{eq:ESS}
\ex(n,H) = \kl{ 1-\frac{1}{\chi(H)-1}} \binom n2+o(n^2),
\end{equation*}
where $\chi(H)$ is the chromatic number of $H$. 
If $H$ is not bipartite, this theorem determines $\ex(n,H)$
asymptotically.

For bipartite graphs $H$ the
Erd\H{o}s-Stone-Simonovits theorem merely states that $\ex(n,H)$ 
is of lower than quadratic order. A general
classification of the order of magnitude of bipartite Tur\'an numbers
is widely open, even in the simplest-looking cases of even cycles and complete bipartite graphs.
% In the former case, the Tur\'an problem is also known as the
% Zarankiewicz problem.
Among even cycles the order of magnitude of the Tur\'an number is known
only for $C_4, C_6$ and $C_{10}$~\cite{E,Be}. For the
Tur\'an number of complete bipartite graphs 
a general upper bound,
\begin{equation*}
\ex(n, K_{t,s}) \leq \kw 2 \sqrt[t]{s-1}\cdot n^{2-\kw t}+\frac{t-1}{2}\cdot n,
\end{equation*}
 was proved by K\H{o}v\'ari, T.~S\'os and
Tur\'an~\cite{KST} using an elementary double counting argument.
%This theorem also provides an upper bound for any general bipartite
%graph $H$ by embedding it into a complete bipartite graph $H'$, as
%$\ex(n,H)\leq\ex(n,{H'})$ whenever $H'\subset H$. 
In general it is commonly conjectured (see e.g. \cite{Bo,CG1}) that the order of magnitude in the
K\H{o}v\'ari-T.S\'os-Tur\'an theorem is the right one. 

\begin{conjecture}\label{conj:KST}
	For every $t,s\inn$, $t\leq s$,
		\[ \ex(n, K_{t,s})  = \Theta\kl{n^{2-\kw t}}.\]
\end{conjecture}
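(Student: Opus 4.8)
This is a long-standing open problem, so what follows is the natural line of attack and the point at which it stalls, rather than a complete proof. The upper bound $\ex(n,K_{t,s}) = O(n^{2-1/t})$ is exactly the K\H{o}v\'ari--T.~S\'os--Tur\'an estimate quoted above, so the entire content of the conjecture is the matching lower bound: for every $t\le s$ one must exhibit a $K_{t,s}$-free graph on $n$ vertices with $\Omega(n^{2-1/t})$ edges. The first thing to try is the probabilistic deletion method --- take the random graph $G(n,p)$ with $p=n^{-\alpha}$ for a suitable $\alpha$, and delete one edge from every copy of $K_{t,s}$ --- but a short calculation shows this only yields $\Omega(n^{2-(s+t-2)/(st-1)})$ edges, and the exponent $2-\frac{s+t-2}{st-1}$ is strictly below $2-\frac1t$ for every $t\ge 2$ (it tends to the target only as $s\to\infty$). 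So the construction must be algebraic.

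The algebraic part of the plan proceeds case by case in $t$. For $t=2$ the incidence graph of a projective plane over $\fq$, equivalently the Erd\H{o}s--R\'enyi polarity graph, is $C_4$-free with $\sim\kw2 n^{3/2}$ edges, settling $s\ge 2$. For $t=3$ the projective norm graph $\NG(q,3)$ (or Brown's graph) is $K_{3,3}$-free with $\sim\kw2 n^{5/3}$ edges, settling $s\ge 3$. For general $t$ one uses the projective norm graph $\NG(q,t)$: it has $\sim q^t$ vertices, is $\sim q^{t-1}$-regular, hence has $\sim\kw2 q^{2t-1}=\kw2 n^{2-1/t}$ edges, and --- crucially --- it contains no copy of $K_{t,(t-1)!+1}$. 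Combined with the K\H{o}v\'ari--T.~S\'os--Tur\'an bound this proves the conjecture for all $s\ge (t-1)!+1$; in particular for all $s$ when $t\le 3$, and for all sufficiently large $s$ when $t\ge 4$.

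What remains is the range $t\le s\le (t-1)!$, non-empty precisely when $t\ge 4$; the very first open instance is $\ex(n,K_{4,4})=\Omega(n^{7/4})$. Here no explicit construction is known, and the obvious candidate fails in an instructive way: $\NG(q,t)$ has $\kw2 n^{2-1/t}$ edges, exactly the right density, but it is \emph{not} $K_{t,s}$-free for such small $s$. This is where the program of the present paper enters --- one wants to determine as precisely as possible which complete bipartite graphs $\NG(q,t)$ actually contains, i.e.\ to pin down the threshold $s_t$, since this measures exactly how far the norm-graph approach can be pushed. A copy of $K_{t,s}$ in $\NG(q,t)$ corresponds to a solution of an explicit system of norm equations over $\fq$, so the task reduces to counting such solutions, which one attacks with Weil-type character-sum estimates. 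The main obstacle is precisely that for $s$ close to $t$ these systems have far too many solutions for $\NG(q,t)$ --- or any of its known relatives --- to be $K_{t,s}$-free; turning the correct density into a genuine extremal example in this range would need either a modification of the norm graph that destroys the surplus copies of $K_{t,s}$ while keeping $\Omega(n^{2-1/t})$ edges, or a fundamentally new idea, and none is currently available.
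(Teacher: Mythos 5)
You have correctly recognized that this is an open conjecture, not a theorem proved in the paper, so there is no ``paper's own proof'' to compare against. Your survey of the state of the art matches the paper's introduction point for point: the K\H{o}v\'ari--T.~S\'os--Tur\'an upper bound, the inadequacy of the probabilistic deletion exponent $2-\frac{s+t-2}{st-1}$, the $t=2$ and $t=3$ extremal constructions, the norm graphs of \cite{KRSz,ARSz} settling $s>(t-1)!$, and the residual open range $t\le s\le (t-1)!$ (nonempty exactly for $t\ge 4$) with $K_{4,4}$ as the first unresolved case. You also correctly place this paper's contribution --- pinning down $s_t$ via counting solutions to norm-equation systems --- as a diagnostic for how far the norm-graph program can be pushed, rather than a resolution of the conjecture. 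This is an accurate and honest account; no corrections needed.
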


To prove a matching lower bound, one
needs to exhibit a $K_{t,s}$-free graph that is dense enough. 
A general lower bound of $\Omega(n^{2-\frac{s+t-2}{st-1}})$ can be obtained
using the probabilistic method, but this is of smaller order for all
values of the parameters. Constructions with number of
edges matching the order of the upper bound were first found for
$K_{2,2}$-free graphs (attributed to Esther Klein by Erd\H{o}s~\cite{E})  
and later for $K_{3,3}$-free graphs (Brown~\cite{Br}). 
In both cases further analysis~\cite{F1, F2} has also led to the determination of
the correct leading coefficient.

Koll\'ar, R\'onyai and Szab\'o~\cite{KRSz} proved Conjecture
\ref{conj:KST} for every $t\geq 4$ and $s>t!$ by constructing for every $t\inn^+$ a family
of graphs, called norm graphs, that are $K_{t,t!+1}$-free and their
density matches the order of magnitude of the K\"ov\'ari-S\'os-Tur\'an
upper bound. 
Later Alon, R\'onyai and Szab\'o \cite{ARSz} modified this
construction to verify the conjecture for $s>(t-1)!$.
One way or another all these $K_{t,s}$-free constructions of optimal
density are based on the 
simple geometric intuition that $t$ ``average'', ``generic''
hypersurfaces in the $t$-dimensional space are ``expected'' to have a
$0$-dimensional intersection. In manifestations of this idea 
the neighborhoods of vertices are such hypersurfaces and the largest
common neighborhood more or less corresponds to the degree of the intersection.
Recently Blagojevi\'c, Bukh and Karasev~\cite{BBK} and later Bukh~\cite{Bu} implemented the idea in a random setting,
where the neighborhoods are determined by random polynomials. This gave an
alternative proof of the tightness of Conjecture~\ref{conj:KST} for
$s=f(t)$, with $f(t)$ much larger than $(t-1)!$.

Despite significant effort by numerous researchers 
in the last sixty years, the fundamental question
about the Tur\'an number of
$K_{t,t}$ is wide open, even in the case of $t=4$. For 
$\ex(n, K_{4,4})$ or even for $\ex(n, K_{4,6})$, it  is not even known
whether they are of larger order than $n^{\frac{5}{3}} = \Theta (\ex(n, K_{3,3}))$.

\subsection{The projective norm graphs}
Let $N:\fqt \rightarrow \fq$ denote the $\fq$-\emph{norm on} $\fqt$,
i.e. $N(A)=A\cdot A^q\cdot A^{q^2}\cdots A^{q^{t-2}}$ for $A\in
\fqt$. %(For properties of the norm function, see the Appendix.)
For a prime power $q=p^k$ and integer $t\geq 2$ Alon et al.~\cite{ARSz}
defined the \emph{projective norm graph} $\NG(q,t)$ as the graph with
vertex set $\mathbb{F}_{q^{t-1}}\times \mathbb{F}_q^{*}$ and vertices
$(A,a)$ and $(B,b)$ being adjacent if $N(A+B)=ab$. 
The projective norm graph $\NG(q,t)$ has $n=n(q,t) :=q^{t-1} \cdot
(q-1)= (1+o(1))q^t$ vertices. 
To count the edges, one can consider an arbitrary 
vertex $(A,a)\in \fqt\times\fqu$ and
determine its degree. First note that if  $(X,x)\in\fqt\times\fqu$ is
a neighbor of $(A,a)$, then $X\neq -A$, as otherwise $0=N(A+X) =ax$, a
contradiction to $a,x \neq 0$. 
For any other choice $X\in\fqt\backslash\{-A\}$ the value of $x$ is
determined uniquely, namely $x = \kw{a}\cdot N(A+X)$, and hence
$(X,x)$ is a neighbour unless it is the same vertex as $(A,a)$. This
happens exactly if $N(2A)=a^2$. Vertices satisfying the latter
equality will be called \emph{loop vertices}. The degree of a
non-loop vertex then is $q^{t-1}-1$, while it is one less for a loop
vertex. 
The number of loop vertices is  $q^{t-1}-1$ if $\Char(\fq)\neq 2$
and zero if $\Char(\fq)=2$, by parts (f) and (a) of
Lemma~\ref{lemma:NormProperties} of the Appendix, respectively.
Now, the number of edges can be precisely calculated:
\begin{align*}
e(\NG(q,t)) &=  \absw{\kw 2 \kl{q^{t-1} - 1}q^{t-1} (q-1) &\text{if $q$ is a power of $2$} \\ \kw 2 \kl{q^{t-1} - 1}(q^{t-1} (q-1) - 1) &\text{otherwise}}.
\end{align*}
In other words, the number of edges in both cases is $\approx \frac{1}{2}q^{2t-1}
\approx \frac{1}{2}n^{2-\frac{1}{t}}.$
Using a general algebro-geometric lemma from \cite{KRSz}, it was shown
in \cite{ARSz} that $\NG(q,t)$ is $K_{t,(t-1)!+1}$-free. 
Since $\NG(q,t)$ also has the desired density, it verifies Conjecture
\ref{conj:KST} for $s>(t-1)!$. 

Since their first appearance, projective norm graphs were studied
extensively \cite{ASh,BP1,BP2,G,KMV,MW, PTTW}.
Their various properties were utilized in many other
areas, both within and outside combinatorics. These include, among others, (hypergraph) Ramsey theory \cite{AR,KPR,LM,MS,Mu1,Mu2,WLi,WLin}, (hypergraph) Tur\'an
theory \cite{ASh,AKS2,Nik1,Nik2,PTTW,PT}, other problems in extremal combinatorics, \cite{AKS1,BS,Ma,PR,SVe,SVo}, number theory \cite{Nic,RSV,Ve,Vi}, geometry \cite{FPSSZ,PST} and computer science \cite{AMY,BGW,BGKRSzW,DKL}. 

A drawback of the proof of the $K_{t,(t-1)!+1}$-freeness of $\NG(q,t)$
in \cite{ARSz} is that it does not give any information about complete
bipartite subgraphs with any other parameters. 
In particular, not only it is not known for any
$t\geq 4$ whether $\NG(q,t)$ contains a $K_{t,t}$, but it is also not
known whether it contains a $K_{t, (t-1)!}$.
Considering the fundamental nature of Conjecture~\ref{conj:KST}, it
was already suggested in \cite{KRSz} that the determination of the
largest integer $s_t$, such that $\NG(q,t)$ contains $K_{t, s_t}$ for
{\em every} large enough prime power $q$ is a question of great interest.
It is known that $s_2=1$, $s_3=2$, but the bounds for $t\geq 4$ are
very far apart: $t-1\leq s_t \leq (t-1)!$.
If $s_t$ were found to be less than $(t-1)!$ then the
projective norm graphs verified Conjecture~\ref{conj:KST} for 
more values of the parameters than what is known currently. 
In particular, as already mentioned before, for the Tur\'an number of $K_{4,6}$ no better lower
bound than $\ex(n,{K_{3,3}})=\Theta (n^{\frac{5}{3}})$ is known. 

There was/is a reasonable amount of hope that the method of
\cite{ARSz} was not optimal for $\NG (q,t)$, and that the projective
norm graphs might also 
not contain $K_{t,s}$ for some $s \leq (t-1)!$.  This optimism is mainly inspired by
the generality of the key lemma of \cite{KRSz} used in the proof. 
That lemma provides very general conditions, under which the system
of equations  
$$\prod_{j=1}^t (x_j -a_{i,j}) = b_i, \ \ i=1, 2, \ldots , t,$$
over any field $\mathbb{F}$ has at most $t!$ solutions $(x_1, \ldots ,
x_t) \in \mathbb{F}^t$. Namely, it was enough to assume for the
constants $a_{i,j}, b_i \in \mathbb{F}$, that $a_{i_1, j} \neq a_{i_2, j}$ whenever
$i_1\neq i_2$. 
For the application one has to use the lemma for the field
$\mathbb{F}_{q^{t-1}}$ only in the special case when $a_{i,j} =
a_{i,1}^{q^{j-1}}$ for every $i,j\in [t]$, and one is interested in bounding the number
of only those solutions for which $x_j = x_1^{q^{j-1}}$ for every $j=1, \ldots ,
  t$.
That is, the key lemma is used for a very special choice of constants
and very special type of solutions.

In this direction, Ball and Pepe~\cite{BP1,BP2} recently proved that the
$K_{t,(t-1)!+1}$-free projective norm graphs do not contain a
$K_{t+1,(t-1)!-1}$, which in particular improved the earlier
probabilistic lower bound on $\ex(n,{K_{5,5}})$. 

Recently Grosu \cite{G} showed that there is a sequence of primes of
density $\frac{1}{9}$, such that for any prime $p$ in this sequence
$\NG(p, 4)$ does contain a $K_{4, 6}$. 

In this paper we extend this to any prime power $q=p^k$, $p\neq 2, 3$,
and also show the existence of not only one, but many copies of $K_{4,6}$. 
Our method is entirely different from Grosu's.
On the way, we are able to determine asymptotically the number of any
$3$-degenerate subgraphs. This has implications to the quantitative
quasirandom properties of projective norm graphs and extends results of
Alon and Shikhelman~\cite{ASh} on generalized Tur\'an numbers.

Furthermore we also give a new, commutative algebra-free proof of the 
$K_{4,7}$-freeness of $\NG(q, 4)$. This argument extends to estimating the
size of the common neighborhoods of four element vertex sets
in $\NG(q, t)$, for any $t\geq 4$. For $t\geq 5$ this was not known to follow from the commutative 
algebraic proof of \cite{KRSz, ARSz}.

Finally, we are also able to determine the automorphism groups
of $\NG(q, t)$ for every value of the parameters.
In the next four subsections we state our main theorems.

\subsection{Common neighborhoods}

In our first main result we consider the common neighborhood of 
small vertex sets in the projective norm graphs.

Recall that for some vertices $(A, a)$ of $\NG(q,t)$ we might have 
$N(2A)=a^2$, in which case there is a loop edge at $(A, a)$. While for
the graph theoretical applications one is mostly interested in the
simple graph created by deleting these loops, for the purposes of our
statements and proofs, for the rest of this paper it will be convenient to consider 
$\NG(q,t)$ as a graph {\em together} with the loops. Whenever we would still
like to make statements involving simple graphs, we will emphasize this.

For a graph $G$ (with loops) and a set of vertices $T\subseteq V(G)$ we define the
\emph{common neighbourhood} of $T$ as $\mathcal{N}(T)= \fcap{v\in
  T}{}\mathcal{N}(v)$, where $\mathcal{N}(v)$ denotes the set of
neighbours of vertex $v$. The \emph{common degree} of $T$ is  
$\deg(T)= \anz{\mathcal{N}(T)}$.
%  and for $k\geq 1$ the \emph{minimum} and \emph{maximum $k$-degree} of $G$ as 
% %
% \begin{equation*}
% \delta_k(G)= \min\men{\deg(A) \mid A\in\binom{V(G)}k},
% \end{equation*}
% %
% and
% %
% \begin{equation*}
% \Delta_k(G)= \max\men{\deg(A) \mid A\in\binom{V(G)}k}
% \end{equation*}
% %
% respectively. A moment of thought reveals, that with this definition
% of minimum degree we have that $\delta_k(\NG(q,t))=0$ for every $k\geq
% 2$, by the easy observation that two vertices sharing the first
% coordinate cannot have a common neighbour in $\NG(q,t)$.
With this notation the main result of Alon, R\'onyai and
Szab\'o~\cite{ARSz} can be phrased as $\deg(T) \leq (t-1)!$ for every
subset $T\subseteq V(\NG(q,t))$ of size $t$. 

In this direction we investigate the common neighbourhood of pairs,
triples and quadruples of vertices in $\NG(q,t)$. 
A moment of thought reveals that two vertices with the same first
coordinate do not have a common neighbour in $\NG(q,t)$. 
We call a set of vertices in $\NG(q,t)$ \emph{generic}, if the first
coordinates of them are pairwise distinct. In particular, the common
neighborhood of non-generic vertex sets is empty.

Equality of the second coordinates will also turn out to affect, even if to a 
much smaller extent, the size of 
common neighborhoods. To this end we call a set of vertices {\em
  aligned} if {\em all} its elements have the same second coordinate. 
For the precise statement it will be convenient to introduce the
indicator function of a vertex set being aligned. For $T\subseteq V$ let
\begin{equation*}
\xi (T) = \absw{1 & \text{if $T$ is aligned} \\
  0  & \text{if $T$ is not aligned}}.
\end{equation*}
Furthermore let $\eta_{\fq}$ be the quadratic character of $\fq$ if $q$ is odd.
Our results about generic vertex sets are summarized in the following
theorem. 

\begin{theorem}\label{thm:main}
Let $q=p^k$ be a prime power, $t\geq 2$ an integer, and $T= \{ (A_i,
a_i) : i =1, \ldots , j\}$ a generic $j$-subset of vertices in $\NG(q, t)$.
\begin{enumerate}[(a)]
\item If $|T|=2$, then 
\begin{equation*}
\deg(T) = \frac{q^{t-1}-1}{q-1} -\xi(T).
%\absw{\frac{q^{t-1}-1}{q-1} & \text{if } a_1\neq a_2 \\
 % \frac{q^{t-1}-1}{q-1} - 1 & \text{if } a_1= a_2}.
\end{equation*}
In particular, we have $\deg(T)= (1+o(1)) q^{t-2}$.
\item If $|T|=3$ and $q$ is odd, then 
\begin{equation*}
\deg(T) = \absw{1 - \eta_{\fq}\kl{(1+c_1-c_2)^2-4c_1} -\xi(T) & \text{if } t=3, \\ 2q+1 - \eta_{\fq}(-3) -\xi(T) & \text{if } t=4 \text{ and } (c_1,c_2) = (1,-1), \\ q^{t-3} + O(q^{t-3.5})  & \text{otherwise,}} 
\end{equation*}
where $c_1=c_1(T)=\frac{a_1}{a_3}\cdot
N\kl{\frac{A_2-A_3}{A_1-A_2}}\in \mathbb{F} _q$, $c_2=c_2(T)= \frac{a_2}{a_3}\cdot
N\kl{\frac{A_1-A_3}{A_1-A_2}}\in \mathbb{F} _q$.\\
In particular, for $t\geq 4$ we have $\deg(T) = (1+o(1))q^{t-3}$, unless $t=4$ and $(c_1(T),
c_2(T)) = (1, -1)$.
\item If $|T|=4$ and $t\geq 4$ then $\deg(T)\leq 6(q^{t-4}+q^{t-5}+\cdots + q+1)$.
\end{enumerate}
\end{theorem}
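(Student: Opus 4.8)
Throughout, the observation driving all three parts is: for a generic $j$-set $T=\{(A_i,a_i):i\in[j]\}$ with $j\ge 2$, a vertex $(X,x)$ of $\NG(q,t)$ is a common neighbour of $T$ exactly when $X\in\fqt$ solves the system $a_1 N(X+A_i)=a_i N(X+A_1)$ for $i=2,\dots,j$; the second coordinate is then recovered as $x=N(X+A_1)/a_1\in\fqu$, which cannot be $0$ since $N(X+A_1)=0$ would force $X=-A_k$ for every $k$. So $\deg(T)$ is the number of such $X$, and in each case I would pass to a M\"obius substitution in $X$. For part (a) the single equation reads $N\kl{\frac{X+A_1}{X+A_2}}=a_1/a_2$; since $X\mapsto Y:=\frac{X+A_1}{X+A_2}$ is a bijection from $\fqt\setminus\{-A_2\}$ onto $\fqt\setminus\{1\}$ carrying $-A_1$ to $0$, the admissible $X$ correspond to those $Y\in\fqt\setminus\{0,1\}$ with $N(Y)=a_1/a_2$. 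As $N:\fqt^*\to\fqu$ is a surjective homomorphism with kernel of size $\frac{q^{t-1}-1}{q-1}$ (Lemma~\ref{lemma:NormProperties}), every fibre has exactly $\frac{q^{t-1}-1}{q-1}$ elements; $0$ lies in no fibre, while $1$ lies in this one iff $a_1=a_2$, i.e. iff $\xi(T)=1$. This gives $\deg(T)=\frac{q^{t-1}-1}{q-1}-\xi(T)=(1+o(1))q^{t-2}$.

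For part (b), translating $X$ by $A_3$, inverting, and rescaling transforms the two equations into $N(z)=d_1$, $N(z-1)=d_2$, with $z$ ranging over $\fqt$ minus a three-element set and $d_1,d_2\in\fqu$ explicit in terms of $c_1,c_2$ (the only twist entering is a factor $N(-1)=(-1)^{t-1}$, so $d_i=c_i$ precisely when $t$ is odd). Two of the three excluded points are never solutions, while the third is a solution exactly when $a_1=a_2=a_3$, producing the $-\xi(T)$; hence $\deg(T)=M(d_1,d_2)-\xi(T)$ with $M(d_1,d_2):=\#\{z\in\fqt:N(z)=d_1,\ N(z-1)=d_2\}$. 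For $t=3$ we have $N(z)=z^{1+q}$ over $\fqq{2}$ and the identity $N(z-1)=N(z)-\Tr(z)+1$, so the system becomes $N(z)=d_1$ together with $\Tr(z)=\tau:=d_1+1-d_2$; a standard count of elements of $\fqq{2}$ with prescribed norm and trace gives $M(d_1,d_2)=1-\eta_{\fq}(\tau^2-4d_1)=1-\eta_{\fq}\kl{(1+c_1-c_2)^2-4c_1}$, using $d_i=c_i$. For $t\ge 4$, $M(d_1,d_2)$ is exactly the kind of norm-equation count analysed in the paper: a character-sum / Weil-bound argument for the associated variety yields $M(d_1,d_2)=q^{t-3}+O(q^{t-3.5})$ for all admissible pairs, and for $t=4$ the single configuration $(c_1,c_2)=(1,-1)$ makes that variety split so the count becomes $2q+1-\eta_{\fq}(-3)$; I would invoke those estimates.

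For part (c), the analogous substitution (translate by $A_1$, invert, rescale) turns the three ratio equations into $N(1+\mu_2 s)=b_2$, $N(1+\mu_3 s)=b_3$, $N(1+\mu_4 s)=b_4$, with $\mu_2,\mu_3,\mu_4\in\fqt^*$ pairwise distinct, $b_i=a_i/a_1\in\fqu$, and the finitely many excluded $s$ again never solutions; thus $\deg(T)$ is at most the number of $s\in\fqt$ solving this system. Writing $N(1+\mu_i s)=\prod_{l=0}^{t-2}(1+\mu_i^{q^l}s^{q^l})=N(\mu_i)\prod_{l=0}^{t-2}\kl{x_l-(-\mu_i^{-q^l})}$ with $x_l:=s^{q^l}$, this becomes a product system in the spirit of \cite{KRSz}: $\prod_{l=0}^{t-2}(x_l-a_{i,l})=\widetilde b_i$ for $i\in\{2,3,4\}$, with $a_{i,l}=-\mu_i^{-q^l}$ pairwise distinct in each column $l$ by genericity, to be solved on the Frobenius-diagonal $x_l=x_0^{q^l}$. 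For $t=4$ this is a square system, and an elementary, commutative-algebra-free elimination (a new argument in place of the key lemma of \cite{KRSz}) bounds its number of solutions by $3!=6$ --- the promised new proof that $\NG(q,4)$ is $K_{4,7}$-free. For $t\ge 5$ there are $t-4$ extra variables, the solution set is generically $(t-4)$-dimensional, and the plan is to show it lies on a variety of degree at most $6$ and then apply a Lang-Weil / Schwartz-Zippel count to obtain $\deg(T)\le 6(q^{t-4}+\dots+q+1)$.

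The M\"obius reductions and the bookkeeping of the $\xi(T)$-terms are routine; the genuine obstacles are two. First, in part (b) for $t\ge 4$, establishing the sharp estimate $M(d_1,d_2)=q^{t-3}+O(q^{t-3.5})$, and the exact value in the degenerate $t=4$ case, requires pinning down the right algebraic model and carefully controlling its bad locus before applying Weil's theorem. Second, in part (c), the elimination must be arranged so that the clean constant $6=3!$ emerges rather than some large power of $t-1$; concretely, one must isolate and exploit the extra rigidity coming from the fact that the three norm hypersurfaces are Frobenius twists of a single one. I expect this second point to be the crux.
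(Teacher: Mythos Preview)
Your reductions and the $\xi(T)$-bookkeeping are correct and amount to the paper's Lemma~\ref{lemma:DegreeHelper} and Lemma~\ref{lemma:reduction}. Part (a) is complete. For $t=3$ in part (b), the trace--norm identity over $\fqq 2$ is a clean alternative to the paper's route through the auxiliary quadratic $f_{3,c_1,c_2}$; the count of $z\in\fqq 2$ with prescribed norm and trace is indeed $1-\eta_{\fq}(\tau^2-4d_1)$, since a root in $\fq$ of $Z^2-\tau Z+d_1$ has the correct norm $d_1$ only when the discriminant vanishes.

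The substantive gaps are in part (b) for $t\ge 4$ and in part (c), and in both places your guess about the crux does not match the actual argument.

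For (b) with $t\ge 4$, a direct Weil/Lang--Weil attack on $\{N(z)=c_1,\ N(z+1)=c_2\}$ is not what the paper does, and it is unclear such an attack gives the error $O(q^{t-3.5})$ or isolates the exceptional $(1,-1)$ pair. Instead the paper introduces $f_{t,c_1,c_2}(Y)=n_{t-2}(Y)n_{t-2}(Y+1)+c_1\, n_{t-2}(Y+1)-c_2\, n_{t-2}(Y)$, shows $S_t(c_1,c_2)\subseteq R_t(c_1,c_2)$ (its root set), locates all multiple roots in $\fq$, and then uses a Stepanov-style averaging over \emph{all} $(c_1,c_2)$ to upgrade the degree inequality into the exact identity $|S_t(c_1,c_2)|+|R_t(c_1,c_2)\cap\fqq{t-2}|=2(q^{t-3}+\dots+1)$. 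Classifying the $\fqq{t-2}$-roots by their $(t-2)$-norm then yields the recursion $|S_t(c_1,c_2)|=2(q^{t-3}+\dots+1)-\sum_{b}|S_{t-1}(b,\tfrac{bc_2}{b+c_1})|$. At $t=4$ this sum unwinds, via the $t=3$ formula, to $\sum_b \eta_{\fq}(L(b))$ for an explicit quartic $L$; Weil's character-sum bound handles it \emph{unless} $L$ is a perfect square, which a coefficient comparison shows happens exactly when $(c_1,c_2)=(1,-1)$, whence the value $2q+1-\eta_{\fq}(-3)$. For $t\ge 5$ one feeds the $t-1$ estimate back into the recursion. None of this structure is visible in your sketch.

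For (c), the Frobenius-twist structure plays almost no role and is certainly not what produces the constant $6$. The paper's argument is a pure two-step resultant elimination valid for \emph{arbitrary} constants $C_{i,j}$ with pairwise distinct columns: with $f_i=\prod_j(Y_j-C_{i,j})-b_i$ for $i=1,2,3$, one forms $g_i=\mathrm{Res}_{Y_{t-1}}(f_i,f_3)$ (each quadratic in $Y_{t-2}$) and then $g=\mathrm{Res}_{Y_{t-2}}(g_1,g_2)$, a priori of degree $8$ in each remaining variable $Y_a$. The degree drops to $6$ by two elementary identities having nothing to do with Frobenius: the $Y_a^8$-coefficient of $g$ is itself the resultant of two quadratics sharing the root $C_{3,t-2}$ and hence vanishes; and $h_3=\prod_{j\le t-3}(Y_j-C_{3,j})$ divides $g$, since it visibly divides the first column of the $4\times 4$ Sylvester determinant. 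Only \emph{after} establishing $\deg_{Y_a}(g/h_3)\le 6$ does one specialise $Y_j=Y^{q^{j-1}}$ to get a univariate polynomial of degree at most $6(1+q+\dots+q^{t-4})$ vanishing at every $X\in H(T)$. No Lang--Weil and no Frobenius rigidity are involved.
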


One interesting feature of part $(c)$ is that its proof provides a
new argument for the $K_{4,7}$-freeness of $\NG(q,4)$, which uses more
elementary tools than the ones in \cite{KRSz, ARSz}.

Note that for $t=3$ the proof of F\"uredi~\cite{F2}
strengthening the K\"ov\'ari-S\'os-Tur\'an upper bound, coupled with 
the fact that $\NG(q,3)$ is $K_{3,3}$-free implies that roughly half of
the triples in $\NG(q,3)$ must have two common neighbors and roughly half
of them have none. 
In the first case of part $(b)$ we characterize triples of each type.

The information provided in part $(b)$ about the common neighborhood of 
$3$-element sets in $\NG(q,4)$ will enable us to construct a large
number of copies of $K_{4,6}$.

\begin{theorem} \label{thm:K46}
Let $q=p^k$ be any prime power such that $p\neq 2, 3$. 
In the projective norm graph $\NG(q, 4)$ there exists at least 
$(q^3-1)(q-1)(q^3-24)=(1+o(1))q^7$ copies of $K_{4,6}$.
\end{theorem}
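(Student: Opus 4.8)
The plan is to build copies of $K_{4,6}$ by first choosing a suitable generic triple $T=\{(A_1,a_1),(A_2,a_2),(A_3,a_3)\}$ of vertices in $\NG(q,4)$ and then selecting a fourth vertex from its common neighborhood so that the resulting $4$-set has exactly (or at least) $6$ common neighbors. Part $(b)$ of Theorem~\ref{thm:main} is the key input: for $t=4$ a generic triple $T$ with $(c_1(T),c_2(T))=(1,-1)$ has common degree $2q+1-\eta_{\fq}(-3)-\xi(T)$, which is of order $q$ — much larger than the generic value $q+O(q^{1/2})$ coming from the ``otherwise'' case. So I would restrict attention to \emph{special} triples, those with $(c_1,c_2)=(1,-1)$, since these have an unusually large common neighborhood to draw a fourth vertex from, and any $4$-set containing such a triple has common degree at most $6$ by part $(c)$. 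The strategy is therefore: (1) count the special triples; (2) for each special triple, understand its common neighborhood $\mathcal N(T)$ well enough to find many fourth vertices $v$ with $\deg(T\cup\{v\})=6$; (3) assemble these into $K_{4,6}$'s and correct for overcounting.

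First I would parametrize the special triples. By part $(b)$, the condition is $\frac{a_1}{a_3}N\!\left(\frac{A_2-A_3}{A_1-A_2}\right)=1$ and $\frac{a_2}{a_3}N\!\left(\frac{A_1-A_3}{A_1-A_2}\right)=-1$. Since $N$ is surjective onto $\fqu$ with all fibers of the same size (Lemma~\ref{lemma:NormProperties}), once $A_1,A_2,A_3$ are chosen generic and distinct, these two equations pin down $a_1$ and $a_2$ in terms of $a_3$ and the $A_i$'s uniquely, while $a_3\in\fqu$ is free. So the number of special (ordered) triples is $|\{(A_1,A_2,A_3)\ \text{pairwise distinct}\}|\cdot(q-1)$, up to the easy check that the relevant norms are indeed nonzero; this gives on the order of $q^3(q-1)\cdot q^3 = q^9$ ordered triples, i.e.\ $\sim q^9/6$ unordered ones. (The factor $(q^3-1)(q-1)$ in the statement strongly suggests the count is organized instead around a single ``apex'' vertex plus a choice of the six-element side, so I would also consider fixing one vertex of the small side and counting; the two bookkeeping schemes should agree.)

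Next, for a fixed special triple $T$, I would analyze $\mathcal N(T)$ and the function $v\mapsto \deg(T\cup\{v\})$ on it. For $v\in\mathcal N(T)$ we have $\deg(T\cup\{v\})=\deg(T\cup\{v\}) \le 6$; I want to show that for \emph{most} choices of the remaining data it equals exactly $6$, or at least that enough $6$-sets of common neighbors exist. Here is where I expect the main obstacle: controlling precisely which $v\in\mathcal N(T)$ yield common degree $6$ (rather than $5$ or less) requires solving, over $\fqq3$, the norm equation system
\[
N(A_i+X)=a_i x,\quad i=1,2,3,4,
\]
and counting its solutions $(X,x)$ as a function of $(A_4,a_4)$. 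This is exactly the ``number of solutions of norm equation systems'' that the abstract promises to estimate via character sums. I would reduce the four equations (using that $T$ is special, which forces algebraic degeneracies that presumably cause the jump from $q$ to $2q$ common neighbors) to a single curve whose $\fqq3$-point count I can estimate by Weil's bound, picking out the main term $6$ plus an $O(q^{-1/2})$-type error controlling the exceptional $(A_4,a_4)$. The technical heart is showing this curve is (geometrically) a disjoint union of six lines for generic parameters, so that the count is genuinely $6$ and not a smaller or larger constant.

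Finally, I would do the counting. Each $K_{4,6}$ in $\NG(q,4)$ contains $\binom{6}{3}=20$ triples on its six-element side, all of which must be special (since any $4$-set containing them has common degree $\ge 6$, and by part $(c)$ that forces the $2q+1-\cdots$ case for $t=4$ — here one uses that $6$ is the \emph{maximum}, so equality in part $(c)$ rules out the $q^{t-3}+O(q^{t-3.5})$ regime). Conversely, given a special triple $T$ and a fourth vertex $v$ with $\deg(T\cup\{v\})\ge 6$, the set $\mathcal N(T\cup\{v\})$ has exactly $6$ elements (by part $(c)$), and together with $T\cup\{v\}$ these form a $K_{4,6}$ — provided we also check these six vertices are pairwise non-adjacent issues don't arise, or rather don't matter, since $K_{4,6}$ only requires the bipartite edges. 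Thus the number of $K_{4,6}$'s equals $\frac{1}{20}$ times (number of special triples) times (number of valid fourth vertices per special triple, which is $\deg(T)-O(1) = 2q+O(1)$ minus the few $v$ with common degree $<6$, call it roughly $2q$) divided by an appropriate symmetry factor on the $4$-side. Matching constants, $\frac{1}{6}q^3(q-1)q^3 \cdot (2q)\cdot\frac{1}{\text{(corrections)}}$ should reproduce $(q^3-1)(q-1)(q^3-24)=(1+o(1))q^7$; the ``$-24$'' is presumably the exact count of exceptional fourth vertices (those lying in configurations that spoil the $6$), which I would track through the character-sum error term. The main obstacle, to reiterate, is the exact solution count of the four-fold norm system for special triples; everything else is parametrization and inclusion–exclusion bookkeeping.
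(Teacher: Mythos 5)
Your high-level intuition---focus on quadruples that contain ``special'' triples with $(c_1,c_2)=(1,-1)$---matches the heuristic the paper uses, but both the key technical step and the final counting scheme differ from the paper, and the key step has a genuine gap.

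The central difficulty, as you correctly identify, is to determine exactly how many common neighbours a chosen quadruple has. But your proposed tool is wrong for the job: a Weil bound on a curve gives an error term of size $O(\sqrt q)$, whereas the number in question is an integer between $0$ and $6$ (by part $(c)$), so one must compute it \emph{exactly}, for at least one explicit quadruple. The paper does this via a structural argument (Lemma~\ref{lemma:SpecialQuadruples}), not a character-sum estimate: starting from the factorisation $f_{4,1,-1}(X)=h(X,1)\,h(1,X)$ with $h(Y,Z)=Y^{q+1}+Y^qZ+Z^{q+1}$ (so that $S_4(1,-1)$ is exactly the zero set of $h(X,1)h(1,X)$ in $\fqq3$, Lemma~\ref{lemma:S4bijection_a}), it reduces the three-equation system $N(Y)=1$, $N(Y+A)=-1$, $N(Y+B)=-1$ to four subsystems of the form ``$h(Y,A)=0$ or $h(A,Y)=0$'' crossed with ``$h(Y,B)=0$ or $h(B,Y)=0$''. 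Two of these collapse to linear equations (one root each) and two to quadratics (each with $1+\eta_{\fqq3}(D_i)$ roots), and the discriminants simplify to give the exact count $5+\eta_{\fqq3}(A^2+AB+B^2)$. Lemmas~\ref{lemma:S4bijection_b}--\ref{lemma:S4bijection_c} then convert $\eta_{\fqq3}(A^2+AB+B^2)$ into $\eta_G(C(A/B))$, making it possible to \emph{choose} $A,B$ (in the paper: $B=1$ and $A=C^{-1}(g^2)$ for a generator $g$ of the group $G$ of $3(q-e_q)$-th roots of unity) so that the count is exactly $6$. None of this algebraic reduction is present in your sketch, and it is not a ``bookkeeping'' step---it is the heart of the proof.

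Your final counting scheme also deviates from the paper's and is not carried out. The paper builds exactly \emph{one} explicit quadruple $U=\{(1,1),(1/(A+1),\cdot),(1/(B+1),\cdot),(0,1)\}$ with $\deg(U)=6$ and then applies the affine-linear transformations $(X,x)\mapsto(\beta X+\alpha,\,N(\beta)c\,x)$ (which preserve adjacency) to produce $(q^3-1)(q-1)(q^3-24)$ genuinely distinct $K_{4,6}$'s; the factor $q^3-24$ counts translations $\alpha$ avoiding the at most $24$ values that would collapse one of the $24$ adjacencies into a loop. Your interpretation of the $-24$ as ``the exact count of exceptional fourth vertices'' is therefore incorrect. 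Finally, your restriction that the fourth vertex $v$ lie in $\mathcal N(T)$ is not what the paper does and is in fact not satisfied by its construction: the fourth vertex $(0,1)$ is \emph{not} a common neighbour of the other three. A scheme that counted ``special triples times valid fourth vertices, divided by symmetry'' would need its own careful analysis of which fourth vertices achieve $6$ and of the overcounting factor; this is plausible in spirit but would require essentially re-deriving the paper's exact-count lemma anyway.
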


% In connection with part $c1)$ we remark that recently Grosu~\cite{G} 
% proved that $\NG(p,4)$ contains $K_{4,6}$ for an infinite subset (of
% density $\frac{1}{9}$)  of the primes $p$. 
% Here we prove the result for almost all prime powers with an
% elementary argument.

\subsection{Quasirandomness}
A (sequence of) graph(s) $G$ on $n$ vertices with average degree $d=d(n)$ is
called {\em quasirandom} if  it possesses some property that the Erd\H os-R\'enyi
binomial random graph $G\left(n,\frac{d(n)}{n} \right)$ also has with probability tending
to $1$ as $n$ tends to infinity. For dense graphs $G$, i.e. when
$\frac{d}{n}$ is constant, many of these natural properties are known
to be equivalent. (see the seminal papers of
Thomason~\cite{Th1, Th2}, and
Chung, Graham, and Wilson~\cite{CGW}). 
These include properties
\begin{itemize}
\item[{\bf Q1}] for any two large enough subsets $A$ and
$B$ of vertices, the number of edges going between them is $\approx \frac{d}{n}|A||B|$;
\item[{\bf Q2}] for most pairs of vertices their common degree is $\approx \frac{d^2}{n}$; 
\item[{\bf Q3}] for any fixed graph $H$, the number of labeled copies
  of $H$ is $\approx n^{v(H)}\left(\frac{d}{n}\right)^{e(H)}$;
\item[{\bf Q4}] the second largest
among the absolute values of eigenvalues of $G$, denoted by $\lambda(
G)$, is of smaller order than the degree $d$ (which is the largest
eigenvalue).
\end{itemize}
The relationship between these properties was investigated in several
papers \cite{CG2, CFZ, KRS} for sparse graphs, i.e., when
$d=o(n)$. Properties {\bf Q1} and {\bf Q2} for example always
follow from {\bf Q4} due to the Expander Mixing Lemma~\cite{ASp}, with a smaller second eigenvalue implying stronger quasirandomness.
Some of the implications however, in contrast to the dense case, are
far from being true. It is an interesting general problem to quantify the extent to which one of these properties implies
another.  

The projective norm graphs in particular serve as examples for some
of the equivalences being false. 
Alon and R\"odl~\cite{AR} and Szab\'o~\cite{Sz} showed that the
eigenvalues of $\NG(q,t)$ are exactly $\pm 1$ times the absolute values of the different Gaussian sums over the
field $\mathbb{F}_{q^{t-1}}$ and hence the second largest absolute
value of an eigenvalue is $\lambda = \lambda(\NG(q,t)) = q^{\frac{t-1}{2}}$. 
That is, not only $\lambda$ is of smaller order
than the degree $d\approx q^{t-1}$, so {\bf Q4} holds, but
$\lambda$ is roughly the square root of the degree.
As it is well-known (and not hard to see,
e.g., \cite{KS}) that for every $d$-regular
graph on $n$ vertices $\lambda = \Omega (\sqrt{d})$ (provided 
the density $\frac{d}{n}$ is bounded away from $1$), 
the projective norm graphs are {\em as quasirandom as it gets}, at
least in terms of their second eigenvalue. 
Still, {\bf Q3} can fail for an arbitrary large inverse polynomial density $n^{-\alpha}$,
$\alpha >0$. 
For example, $\NG(q, 4)$ does not contain any $K_{4,7}$, but 
the random graph $G(n, n^{-\frac{1}{4}})$ contains many (i.e. $\Theta(n^{4})$) copies. 

Even though {\bf Q3} might fail for certain graphs, it is an
interesting problem to quantify to what extent the ``perfect
quasirandomness'' of $\NG(q,t)$ in terms of property {\bf Q4}
carries over to property {\bf Q3}. To this end we will call a graph
$G$ {\em $H$-quasirandom} if property {\bf Q3} holds for $H$, that is,
if the number $X_H(G)$ of labeled copies of $H$ in $G$ is
$\Theta(n^{v(H)}\left(\frac{d}{n}\right)^{e(H)})$. If $X_H(G) =
(1+o(1)) n^{v(H)}\left(\frac{d}{n}\right)^{e(H)}$, then we say that
$G$ is {\em asymptotically $H$-quasirandom}.
With this notion any regular graph is asymptotically $K_2$-quasirandom and the
projective norm graph $\NG(q,t)$ is not $K_{t,(t-1)!+1}$-quasirandom. 
 
Alon and Pudlak~\cite{AP} (see also \cite{KS}) have shown using the Expander 
Mixing Lemma that any $d$-regular graph $G$ on $n$ vertices with
second eigenvalue $\lambda$ (such graphs are also called {\em $(n,d,\lambda)$-graphs}) 
and $\lambda \ll \frac{d^{\Delta}}{n^{\Delta -1}}$ contains
$(1+o(1))n^{v(H)}\left(\frac{d}{n}\right)^{e(H)}$ labeled copies of
any $H$ with maximum degree at most $\Delta$. 
In our terminology they have shown that an $(n, d, \lambda)$-graph
with small enough $\lambda$ is asymptotically $H$-quasirandom.

For the projective norm graph this means that if $\Delta (H) <
\frac{t+1}{2}$, then $\NG(q,t)$ is $H$-quasirandom.
For $\Delta=2$ this statement starts to work when $t$ is at least $4$ and for
$\Delta=3$ it starts to work when $t$ is at least $6$. 
Our second main result goes {\em beyond} what is possible in terms of
subgraph containment from the
general eigenvalue bound of the Expander Mixing Lemma and also 
deals with the much wider class of degenerate graphs instead of just
bounded maximum degree.
(Recall that a graph $G$ is $r$-degenerate if every nonempty subgraph
of $G$ has a vertex of degree at most $r$, or equivalently, there is
an ordering of the vertices of $G$ such that every vertex has at most 
$r$ neighbours preceeding it.)

\begin{theorem}\label{thm:subgraph}
Let $q=p^k$ be an odd prime power and $H$ a simple graph. %on $v$ vertices and $m$ edges. 
Then for the number of vertex labeled copies of $H$ in $\NG(q,t)$ we have
that, as $q$ tends to infinity,
\begin{equation}\label{eq:order-count}
X_H(\NG(q,t)) = \Theta \left(q^{tv(H) - e(H)}\right),
\end{equation}
provided $H$ is $3$-degenerate and $t\geq 4$. That is, $\NG(q,t)$ is $H$-quasirandom.\\
%or $H$ is $2$-degenerate and $t\geq 3$. 
Moreover, if $H$ is $3$-degenerate and $t\geq 5$ or  $H$ is $2$-degenerate
and $t\geq 3$, then 
\begin{equation}\label{eq:asymptotic-count}
|X_H(\NG(q,t)) - q^{t\cdot v(H) - e(H)}| \leq   O(q^{t v(H)-e(H)-\frac{1}{2}}).
%+ O\left(q^{t v(H)-e(H)-1}\right).
\end{equation}
In particular, $\NG(q,t)$ is asymptotically $H$-quasirandom in these cases. 
\end{theorem}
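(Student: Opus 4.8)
\textbf{Proof proposal for Theorem~\ref{thm:subgraph}.}

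The plan is to count labeled copies of $H$ by revealing the vertices of $H$ one at a time in a degeneracy order $v_1, v_2, \ldots, v_{v(H)}$, where each $v_\ell$ has at most $r$ neighbours among $v_1, \ldots, v_{\ell-1}$ (with $r=3$ or $r=2$). Placing $v_\ell$ means choosing a vertex of $\NG(q,t)$ in the common neighbourhood of the already-placed images of its back-neighbours. By Theorem~\ref{thm:main}, a single vertex has degree $(1+o(1))q^{t-1}$, a generic pair has common degree $(1+o(1))q^{t-2}$, and a generic triple has common degree $(1+o(1))q^{t-3}$ with the explicit exceptional families quantified in part~(b). Thus, heuristically, a vertex with exactly $b$ back-neighbours contributes a factor of roughly $q^{t-b}$, and multiplying over all vertices of $H$ gives $q^{tv(H)-\sum_\ell b_\ell} = q^{tv(H)-e(H)}$, matching the target order in~\eqref{eq:order-count} and, after sharpening the error terms, the asymptotic count in~\eqref{eq:asymptotic-count}.

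To turn this into a rigorous argument I would set up an auxiliary counting sequence: let $X_\ell$ be the number of labeled partial embeddings of the subgraph induced on $\{v_1,\ldots,v_\ell\}$, and express $X_{v(H)} = X_H(\NG(q,t))$ as a telescoping product-like sum, where at step $\ell$ one sums over placements of $v_\ell$ of the common degree of the (image of the) back-neighbour set. The subtlety is that these common degrees are not exactly constant: they depend on the particular images already chosen, both through the alignment indicator $\xi$ and, more importantly for $r=3$, through whether the triple of images falls into one of the exceptional families of Theorem~\ref{thm:main}(b) (for $t=4$, the locus $(c_1,c_2)=(1,-1)$ where the common degree jumps to $\approx 2q$; for $t=3$ the sign-character fluctuation). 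I would control this by showing the number of partial embeddings whose last-revealed back-neighbour triple is exceptional is of strictly smaller order: there are $O(q^{t})$ choices for a triple in the bad locus rather than $O(q^{2t})$ unconstrained choices of three already-placed vertices that could be back-neighbours, so the total contribution of bad steps is down by a factor $q^{-(t-\mathrm{something})}$, which is negligible once $t\geq 4$ for~\eqref{eq:order-count} and once $t\geq 5$ for the stronger estimate~\eqref{eq:asymptotic-count}. For the $2$-degenerate case every back-neighbour set has size at most $2$, Theorem~\ref{thm:main}(a) gives an \emph{exact} formula for the common degree of a generic pair, and the only fluctuation is the bounded term $\xi$, so the error analysis is much cleaner and works already for $t\geq 3$.

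The main obstacle I anticipate is the bookkeeping of lower-order error terms across the telescoping product: an error of relative size $q^{-1/2}$ at a single step, accumulated multiplicatively over $v(H)$ steps, must be shown to remain $O(q^{-1/2})$ overall rather than blowing up to $O(v(H)q^{-1/2})$ or worse — this requires care because $v(H)$ is a constant but the constant enters. One also has to check that non-generic or degenerate intermediate configurations (two images of distinct $v_i, v_j$ coinciding, or sharing a first coordinate) contribute only lower-order terms; since $H$ is simple and we are counting embeddings, coincidences force extra equations and hence a smaller-dimensional solution set, which is routine but must be stated. Finally, for the lower bound in~\eqref{eq:order-count} I would note that revealing vertices in degeneracy order, at each step \emph{at least} some constant fraction of the claimed common degree is available after discarding the $O(q^{t-1})$ forbidden choices (non-generic, loop, already-used), so the product is $\Omega(q^{tv(H)-e(H)})$, completing the two-sided bound.
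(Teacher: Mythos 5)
Your overall plan — reveal vertices of $H$ in a degeneracy order and bound the number of placements at each step by the common degree of the back-neighbour set, using Theorem~\ref{thm:main} — is precisely the framework of the paper's proof. Where you diverge is in how you propose to control the fluctuations of the common degrees, and there is a conceptual muddle there worth clearing up.

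The paper sidesteps your ``density of exceptional triples'' argument entirely, because it is not needed for the claimed ranges. It introduces $\Delta_d(q,t)$ and $\delta_d(q,t)$, the worst-case largest and smallest common degree of a generic $d$-set, and simply multiplies these worst-case bounds across the steps. The point you are missing is that the exceptional family $(c_1,c_2)=(1,-1)$ in Theorem~\ref{thm:main}(b) exists \emph{only when $t=4$}; for $t\geq 5$ every generic triple has common degree $q^{t-3}+O(q^{t-3.5})$ with no exceptions. So in the ranges where the theorem asserts the asymptotic count \eqref{eq:asymptotic-count} (3-degenerate with $t\geq 5$, or $2$-degenerate with $t\geq 3$), the uniform bound $|\Delta_d-q^{t-d}|,|\delta_d-q^{t-d}|\leq Cq^{t-d-1/2}$ holds for all $d\leq 3$ (resp.\ $d\leq 2$), and there is nothing left to control. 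Your statement that a density argument is ``negligible once $t\geq 5$ for the stronger estimate'' conflates two different things: for $t\geq 5$ the bad locus is empty, not merely sparse. Conversely, for $t=4$ the paper does \emph{not} try to show exceptional triples are rare among the actual partial embeddings (which, as you note, is the delicate part since partial embeddings are not uniformly distributed triples) — it just accepts the factor-$2$ gap between $\Delta_3$ and $\delta_3$ and concludes only the $\Theta$ bound \eqref{eq:order-count}, explicitly conjecturing (Remark 1) that the asymptotic count should hold for $t=4$ too. Your density sketch, if carried out, would be an attempt to prove that conjecture, but your crude count of the bad locus (``$O(q^t)$ out of $O(q^{2t})$'') has the wrong exponents (three vertices live in $\approx q^{3t}$ configurations and the constraint $(c_1,c_2)=(1,-1)$ is two $\mathbb F_q$-equations, cutting it to $\approx q^{3t-2}$), and more importantly you have not addressed how the weighting by partial embeddings interacts with this density. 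Your remarks on error accumulation $(1+Cq^{-1/2})^{v(H)}$ and on discarding non-generic/already-used choices are correct and match the paper's treatment.
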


{\bf Remarks.} 
\begin{itemize}
\item[{\bf 1.}] As $\NG(q,3)$ does not contain $K_{3,3}$ and $\NG(q,2)$
  does not contain $K_{2,2}$, the bound on $t$ for
  \eqref{eq:order-count} is best possible for both $3$- and
  $2$-degenerate graphs.
We conjecture though that the stronger statement \eqref{eq:asymptotic-count} should also be
true for $3$-degenerate graphs and $t=4$.
\item[{\bf 2.}] The theorem remains valid even if $H=H_q$ and $v=v(H_q)$
grows moderately, namely if $v(H_q))=o(\sqrt{q})$ as $q$ tends to infinity, with an error term $o\left(q^{t v(H)-e(H)}\right)$ in (\ref{eq:asymptotic-count}).
\end{itemize}

% The problem of counting copies of a graph $H$ in $\NG(q,t)$ was so far considered only in the case when $H=K_{\ell}$. In all cases the number of copies turned out to be asymptotically the same as it is expected in the corresponding random graph. First Alon and Pudl\'ak in \cite{AP}, motivated by Ramsey problems, did the counting in the case $3\leq \ell\leq \lceil\frac{t}{2}\rceil$. Later Kostochka, Mubayi and Verstra\"ete in \cite{KMV}, motivated by extremal problems in hypergraphs, counted the copies of $K_3$ for $t\geq 3$. Recently Alon and Shikelman in \cite{ASh} counted the copies of $K_{\ell}$ in the case when $t\geq 2\ell-2$ and the copies of $K_3$ in the case $t\geq 2$, and used these results to obtain bounds for generalized Tur\'an problems. 

\subsection{Generalized Tur\'an numbers}

For simple graphs $T$ and $H$ (with no isolated vertices) and a positive
integer $n$ the generalized Tur\'an problem asks for the maximum
possible number $\ex(n,T,H)$ of unlabeled copies of $T$ in an $H$-free graph on
$n$ vertices. Note that by setting $T=K_2$ we recover the original
Tur\'an problem for $H$. Alon and Shikhelman~\cite{ASh} investigated the
problem in the case when $H$ is a complete bipartite
graph $K_{t,s}$ with $t\leq s$, and $T$ is a complete graph $K_\ell$ or a complete
bipartite graph $K_{a,b}$. They have shown that $K_{t,s}$-freeness in
an $n$ vertex graph implies that the number of copies of $T$ is at most
$O\left(n^{v(T)-\frac{e(T)}{t}}\right)$, whenever $T$ is a clique $K_m$
with $m\leq t+1$ or a complete bipartite graph $K_{a,b}$ with $a\leq b
<s$ and $a\leq t$. This, together with the Alon-Pudlak counting of subgraphs in the
projective norm graph implied that for every $s>(t-1)!$, the generalized
Tur\'an number
\begin{equation}\label{eq:general-turan}
ex(n, T,K_{t,s}) = \Theta \left( n^{v(T) - \frac{e(T)}{t}} \right), 
\end{equation}
whenever $T$ is a clique $K_m$ with $m\leq \frac{t+2}{2}$ or a complete
bipartite graph $K_{a,b}$ with $a\leq b \leq \frac{t}{2}$. 
For $T=K_3$, Kostochka, Mubayi and Verstra\"ete~\cite{KMV} and Alon and Shikhelman~\cite{ASh} 
counted triangles in the projective norm graphs more directly, which
extended \eqref{eq:general-turan} from $t\geq 4$ to all $t\geq 2$.

% \begin{theorem}[Alon, Shikelman \cite{ASh}]\label{AShThm}
% \begin{enumerate}[a)]
% \item For any fixed $m\geq 2$ and $s\geq t\geq m-1$
% %
% \begin{equation*}
% \ex(n,K_m,K_{t,s})\leq \left(\frac{1}{m!}+o(1)\right)(t-1)^{\frac{m(m-1)}{2t}}n^{m-\frac{m(m-1)}{2t}}.
% \end{equation*}
% %
% \item For any fixed $m\geq 2$ and $t\geq 2m-2$, $s\geq (t-1)!+1$ or $m=3$ and $t\geq 2$, $s\geq (t-1)!+1$ 
% %
% \begin{equation*}
% \ex(n,K_m,K_{t,s})\geq \left(\frac{1}{m!}+o(1)\right)n^{m-\frac{m(m-1)}{2t}},
% \end{equation*}
% %
% in particular, $\ex(n,K_m,K_{t,s})=\Theta(n^{m-\frac{m(m-1)}{2t}})$.
% \item For any fixed $1\leq a\leq b<t\leq s$
% %
% \begin{equation*}
% \ex(n,K_{a,b},K_{t,s})\leq (1+o(1))\frac{1}{a!(b!)^{1-\frac{a}{t}}}{\binom{s-1}{b}}^{\frac{a}{t}}n^{a+b-\frac{ab}{t}},
% \end{equation*}
% %
% and if $a=b$ the above bound can be divided by $2$.
% \item For any fixed $1\leq a\leq b$, $t\geq 2b-1$ and $s\geq (t-1)!+1$
% %
% \begin{equation*}
% \ex(n,K_{a,b},K_{t,s})\geq (1+o(1))\frac{1}{a!b!}n^{a+b-\frac{ab}{t}},
% \end{equation*}
% %
% and if $a=b$ the above bound should be divided by $2$. In paricular, $\ex(n,K_{a,b},K_{t,s})=\Theta(n^{a+b-\frac{ab}{t}})$.
% \end{enumerate}
% \end{theorem}

%We note that the upper bounds in Theorem \ref{AShThm} are provided by
%projective norm graphs. 

Here we  extend the validity of \eqref{eq:general-turan}, beyond
the eigenvalue bound, for $T=K_4$ and complete bipartite graphs with
one part of size at most three. As each unlabeled copy of $T$ gives rise to
$|\Aut(T)|$ labeled copies, the following is an 
immediate corollary of Theorem~\ref{thm:subgraph}.

\begin{corollary}\label{cor:general-turan}
For every $3$-degenerate simple graph $T$ and any fixed
$t\geq 4$ and $s> (t-1)!$ we have
\begin{equation*}
\ex(n,T,K_{t,s})\geq (1+o(1))\frac{1}{|\Aut(T)|}n^{v(T)-\frac{e(T)}{t}}.
\end{equation*}
\end{corollary}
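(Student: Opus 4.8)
The plan is to derive Corollary~\ref{cor:general-turan} directly from Theorem~\ref{thm:subgraph} by a standard double-counting argument translating labeled copies into unlabeled ones, combined with the observation that the parameters line up exactly. First I would fix $t\geq 4$ and $s>(t-1)!$, and recall that by \cite{ARSz} (equivalently, the $\deg(T)\leq (t-1)!$ statement following from Theorem~\ref{thm:main}), the projective norm graph $\NG(q,t)$ is $K_{t,s}$-free for every $s>(t-1)!$; in particular it is $K_{t,(t-1)!+1}$-free, so it certainly does not contain $K_{t,s}$. Deleting the (at most $q^{t-1}-1$) loop vertices, or simply deleting one edge per loop, yields a simple $K_{t,s}$-free graph $G_q$ on $n=n(q,t)=q^{t-1}(q-1)=(1+o(1))q^t$ vertices; removing $o(q^t)$ vertices changes neither the asymptotics of $n$ nor, as one checks, the leading-order count of copies of the fixed graph $T$.

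Next I would invoke Theorem~\ref{thm:subgraph}: since $T$ is $3$-degenerate and $t\geq 4$, we have $X_T(\NG(q,t)) = \Theta(q^{t v(T) - e(T)})$, and in fact the more precise form shows $X_T(\NG(q,t))=(1+o(1))q^{t v(T)-e(T)}$ when the hypotheses of \eqref{eq:asymptotic-count} hold; for the corollary the $\Theta$-version together with a matching constant is what we need, so I would track constants carefully. Each \emph{unlabeled} copy of $T$ in a graph corresponds to exactly $|\Aut(T)|$ \emph{vertex-labeled} copies, hence the number of unlabeled copies of $T$ in $\NG(q,t)$ (and hence in $G_q$, up to lower-order corrections) is $\frac{1}{|\Aut(T)|}X_T(\NG(q,t))$. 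Plugging in $n=(1+o(1))q^t$, so that $q^{tv(T)-e(T)} = (1+o(1)) n^{v(T)-\frac{e(T)}{t}}$, gives that $G_q$ contains $(1+o(1))\frac{1}{|\Aut(T)|} n^{v(T)-\frac{e(T)}{t}}$ unlabeled copies of $T$, which is exactly the claimed lower bound on $\ex(n,T,K_{t,s})$.

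Finally I would address the one genuinely non-formal point: the values of $n$ realized as $n(q,t)=q^{t-1}(q-1)$ for prime powers $q$ are sparse, whereas $\ex(n,T,H)$ is defined for all $n$. The standard fix is monotonicity: $\ex(n,T,H)$ is non-decreasing in $n$ (add isolated vertices), and consecutive admissible values $n(q,t)$ and $n(q',t)$ for consecutive prime powers $q<q'$ satisfy $n(q',t)/n(q,t)\to 1$ by Bertrand's postulate (or the prime power analogue), so the lower bound at the nearest admissible $n(q,t)\leq n$ transfers to general $n$ with only a $(1+o(1))$ loss. I expect this density-of-parameters bookkeeping, rather than anything substantive, to be the only ``obstacle'', and it is entirely routine; the mathematical content is carried entirely by Theorem~\ref{thm:subgraph}, which is why the statement is phrased as an immediate corollary.
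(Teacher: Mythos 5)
Your proposal is correct and follows the paper's intended route: the paper derives the corollary in one line from Theorem~\ref{thm:subgraph} by exactly the labeled-to-unlabeled conversion (divide by $|\Aut(T)|$), and you fill in the routine bookkeeping (loop removal, $n=(1+o(1))q^t$, density of admissible $n$ via nearby prime powers, monotonicity of $\ex(n,T,H)$ in $n$) that the paper leaves implicit. The one place you hedge --- the $t=4$, $3$-degenerate case, where the theorem as stated only asserts a $\Theta$-bound --- is in fact fine: the proof of Theorem~\ref{thm:subgraph} establishes the one-sided estimate $X_H(\NG(q,4))\geq (1-o(1))q^{tv(H)-e(H)}$ even in that case (only the upper bound picks up the extra factor $2^{c(H)}$), so the leading constant is $1$ exactly as the corollary needs.
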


%We note that the case $t\geq 4$ is just a direct application of Theorem \ref{main subgraph}, while the case $t=4$ will be proven separately.

By combining the upper bound of Alon and Shikhelman~\cite{ASh} with the
lower bound of Corollary~\ref{cor:general-turan} we determine 
the order of magnitude of many new generalized Tur\'an numbers.

\begin{corollary}\label{countingcor2} For every $t\geq 4$ and $s>(t-1)!$ we have 
\begin{equation*} 
ex(n, T, K_{t,s}) = \Theta\left(n^{v(T)-\frac{e(T)}{t}}\right),
\end{equation*}
whenever $T$ is a clique $K_4$ or a complete bipartite graph $K_{a,b}$
with $a\leq b < s$ and $a\leq 3$.
\end{corollary}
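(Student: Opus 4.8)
The plan is to prove Corollary~\ref{countingcor2} by sandwiching the generalized Tur\'an number between a matching upper and lower bound. For the \emph{upper bound} I would simply invoke the result of Alon and Shikhelman~\cite{ASh} quoted in the introduction: if $G$ is an $n$-vertex $K_{t,s}$-free graph and $T$ is a clique $K_m$ with $m\le t+1$ or a complete bipartite graph $K_{a,b}$ with $a\le b<s$ and $a\le t$, then $G$ contains at most $O\bigl(n^{v(T)-e(T)/t}\bigr)$ copies of $T$. Under the hypotheses of the corollary we have $t\ge 4$, so $K_4$ qualifies (since $4\le t+1$), and $K_{a,b}$ with $a\le 3$ and $b<s$ qualifies as well (since $a\le 3\le t$). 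Hence $\ex(n,T,K_{t,s})= O\bigl(n^{v(T)-e(T)/t}\bigr)$ for every admissible $T$.

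For the \emph{lower bound} the key observation is that the permitted $T$ are all $3$-degenerate: $K_4$ is $3$-degenerate, and $K_{a,b}$ with $a\le 3$ is $3$-degenerate because repeatedly removing a vertex from the side of size $a\le 3$ leaves every vertex with at most $3$ neighbours preceding it in the resulting elimination order. Therefore Corollary~\ref{cor:general-turan} applies and yields, for $t\ge 4$ and $s>(t-1)!$,
\begin{equation*}
\ex(n,T,K_{t,s}) \ge (1+o(1))\frac{1}{|\Aut(T)|}\,n^{v(T)-\frac{e(T)}{t}},
\end{equation*}
which is $\Omega\bigl(n^{v(T)-e(T)/t}\bigr)$ since $|\Aut(T)|$ is a constant depending only on the fixed graph $T$. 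This in turn rests on Theorem~\ref{thm:subgraph}: because $t\ge 4$ and $T$ is $3$-degenerate, $\NG(q,t)$ contains $\Theta\bigl(q^{t\,v(T)-e(T)}\bigr)$ labeled copies of $T$; since $\NG(q,t)$ is $K_{t,(t-1)!+1}$-free (hence $K_{t,s}$-free for $s>(t-1)!$) on $n=(1+o(1))q^t$ vertices, dividing by $|\Aut(T)|$ and rewriting $q^{t\,v(T)-e(T)}$ as $n^{v(T)-e(T)/t}$ gives the stated lower bound on $\ex(n,T,K_{t,s})$.

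Combining the two bounds gives $\ex(n,T,K_{t,s})=\Theta\bigl(n^{v(T)-e(T)/t}\bigr)$, completing the proof. I do not expect any genuine obstacle here: the corollary is a bookkeeping combination of an external upper bound and our own Corollary~\ref{cor:general-turan}. The only point requiring a line of care is checking that the families $T=K_4$ and $T=K_{a,b}$ with $a\le 3<s$ simultaneously satisfy the hypotheses of \emph{both} inputs --- that is, the Alon--Shikhelman admissibility conditions ($m\le t+1$, resp.\ $a\le b<s$ and $a\le t$) \emph{and} $3$-degeneracy --- which, as noted above, follows immediately from $t\ge 4$ and $a\le 3$.
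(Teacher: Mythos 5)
Your argument matches the paper's: the paper derives Corollary~\ref{countingcor2} precisely by combining the Alon--Shikhelman upper bound with the lower bound of Corollary~\ref{cor:general-turan}, and your verification that $K_4$ and $K_{a,b}$ with $a\le 3\le b<s$ are simultaneously $3$-degenerate and Alon--Shikhelman admissible is the only content needed. The proof is correct and takes essentially the same route.
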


\subsection{The automorphism group}

Our last main result concerns the automorphisms of $\NG(q,t)$. 
In the statement $Z_n$ denotes the cyclic group of order $n$.

\begin{theorem}\label{thm:aut}
For any odd prime power $q=p^k$ and integer $t\geq 2$, the maps of the form 
\begin{equation*}
(X,x)\mapsto ( C^2 \cdot X^{p^i}, \pm N(C) \cdot x^{p^i} )
\end{equation*}
are automorphisms of $\NG(q,t)$ for any choice of  $C\in\fqt^*$, and
$i\in [k(t-1)]$. \\
For any $q=2^k$ and integer $t\geq 2$, the maps of the form 
\begin{equation*}
(X,x)\mapsto ( C^2 \cdot X^{p^i}+A, N(C) \cdot x^{p^i} )
\end{equation*}
are automorphisms of $\NG(q,t)$ for any choice of  $C\in\fqt^*$, $A\in
\fqt$, and $i\in [k(t-1)]$.\\
Moreover, for $q>2$ and $t\geq 2$ these include all automorphisms and
the automorphism group has the following structural description:
\begin{equation*}
\Aut(\NG(q,t))\isom 
\left\{\begin{array}{ll}
Z_{q^{t-1}-1}\rtimes Z_{k(t-1)}& \mbox{if $q$ and $t-1$ are both odd}\\
\left(Z_2\times Z_{\frac{q^{t-1}-1}{2}}\right)\rtimes Z_{k(t-1)}& \mbox{if $q$ is odd and $t-1$ is even}\\
\left((Z_p)^{k(t-1)}\rtimes Z_{q^{t-1}-1}\right)\rtimes Z_{k(t-1)}& \mbox{if $q$ is even} 
\end{array}\right.
\end{equation*}
\end{theorem}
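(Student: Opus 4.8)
The plan is to proceed in three stages: first verify that the listed maps are automorphisms, then show they form the claimed group, and finally prove there are no others. The first stage is a direct computation. For $q$ odd, if $\varphi(X,x)=(C^2X^{p^i},\varepsilon N(C)x^{p^i})$ with $\varepsilon=\pm1$, then using multiplicativity of $N$, the Frobenius being a field automorphism of $\fqt$ (so it commutes with $N$ and with addition), and $N(C^2)=N(C)^2$, one checks that $N(C^2X^{p^i}+C^2Y^{p^i}) = N(C)^2 N(X+Y)^{p^i}$, which equals $(\varepsilon N(C)x^{p^i})(\varepsilon N(C)y^{p^i})$ exactly when $N(X+Y)=xy$; since $\varepsilon^2=1$ this is an equivalence, so adjacency (including loops) is preserved. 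Bijectivity is clear since $C\mapsto C^2$ is a bijection on $\fqt^*$ only up to sign, but the map $(X,x)\mapsto(C^2X^{p^i},\ldots)$ is visibly invertible on the whole vertex set. The even case is analogous, now with translations $X\mapsto C^2X^{p^i}+A$ permitted because in characteristic $2$ one has $N(A+B)$ depending only on $A+B$ and there is no sign ambiguity (Lemma~\ref{lemma:NormProperties}(a)).

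For the group structure, I would identify the subgroups: the maps with $i=0$, $\varepsilon=1$, $A=0$ form a copy of $\fqt^*/\{\pm1\}$ acting by $X\mapsto C^2X$ — this is cyclic of order $q^{t-1}-1$ if $t-1$ is odd (squaring is onto since $-1$ is a square, or rather $\fqt^*$ has odd-index-2 subtlety) and is $Z_2\times Z_{(q^{t-1}-1)/2}$ when $t-1$ is even; the sign $\varepsilon$ contributes a $Z_2$ absorbed appropriately; the Frobenius powers $i\in[k(t-1)]$ give a $Z_{k(t-1)}$ acting on the normal subgroup by conjugation; and in the even case the translations $A\in\fqt$ give an extra normal $(Z_p)^{k(t-1)}$. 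Working out which semidirect products arise, and in particular how the $\pm1$ interacts with the square map depending on the parity of $t-1$, is the bookkeeping that produces the three cases in the statement. One must be a little careful that $C^2=D^2$ iff $C=\pm D$, so the "scaling part" is genuinely $\fqt^*/\{\pm 1\}$, and that the extra $\varepsilon$ really does enlarge this by a factor of $2$ (it does, since $\varepsilon N(C)$ ranges over all of $\fqt^*$ as a set but the pair $(C^2,\varepsilon N(C))$ is not determined by $C^2$ alone).

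The main obstacle — and the heart of the proof — is showing these are \emph{all} the automorphisms. Here I would argue as follows. An automorphism $\varphi$ preserves the common-neighborhood structure, so by Theorem~\ref{thm:main}(a) it must map generic pairs to generic pairs and aligned pairs to aligned pairs (the $-\xi(T)$ term is the only invariant distinguishing them, for $t\ge 3$; for $t=2$ a separate small argument is needed). Preserving the "non-generic" relation "$A_1=A_2$" means $\varphi$ induces a well-defined bijection on the set of first coordinates $\fqt$, and preserving "aligned" means it respects the fibration by second coordinate. The relation $A_1 = A_2$ on vertices is exactly an equivalence relation whose classes are the fibers $\{A\}\times\fqu$; so $\varphi$ descends to a bijection $\sigma:\fqt\to\fqt$ and, on each fiber, to bijections of $\fqu$. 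Now I would extract a strong rigidity constraint from the defining equation: adjacency $(A,a)\sim(B,b)$ says $N(A+B)=ab$, and one can recover, up to the global scaling, the "norm geometry" on $\fqt$ — e.g., the quantity $c_1,c_2$ from Theorem~\ref{thm:main}(b) are automorphism-invariant, which pins down $\sigma$ as an affine-semilinear map $A\mapsto C^2 A^{p^i} + A_0$ (with $A_0=0$ forced when $q$ is odd because then $0\in\fqt$ is characterized as the unique first coordinate $A$ for which $(A,a)$ and $(-A,b)=(A,b)$ coincide... more precisely, the loop-vertex structure at $2A=0$ breaks translation symmetry). This is where the bulk of the work lies: translating combinatorial invariants back into algebraic constraints on $\sigma$ and the fiber maps, and then checking the fiber maps are forced to be $x\mapsto \pm N(C)x^{p^i}$. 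I expect the cleanest route is to first prove $\sigma$ is semilinear using that $\varphi$ preserves the relation "$\exists$ common neighbor of $(A,a),(B,b),(D,d)$ with prescribed count" encoded by $c_1,c_2$, then to observe that once $\sigma(A)=C^2A^{p^i}$ is known, the equation $N(A+B)=ab$ forces the second-coordinate action to be multiplication by $N(C)$ times a fixed sign (independent of the fiber, by a connectedness/consistency argument across overlapping equations). The even characteristic case is where translations survive, and the argument must be adjusted accordingly — there the absence of loop vertices removes the obstruction to $A_0\ne 0$.
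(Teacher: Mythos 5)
Your outline gets the overall architecture right: verify the displayed maps are automorphisms (your computation is essentially correct), show that any automorphism decouples into an action on first coordinates and an action on second coordinates (you extract this from the pair-degree dichotomy of Theorem~\ref{thm:main}(a), which is the same starting point the paper uses, phrased via ``poor sets'' rather than preserved equivalence relations), and then pin down the first-coordinate permutation $\Psi$ as affine-semilinear. It is exactly in this last step that the proposal has a genuine gap. You write that one should ``extract a strong rigidity constraint from the defining equation'' using the invariance of $c_1,c_2$ from Theorem~\ref{thm:main}(b), and then assert that this ``pins down $\sigma$ as an affine-semilinear map'' --- but you give no argument for why a permutation of $\mathbb{F}_{q^{t-1}}$ preserving such norm relations must be $\mathbb{F}_{q^{t-1}}$-semilinear. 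That implication is itself a nontrivial theorem. The paper observes the cleaner fact that adjacency preservation directly yields $N(\Psi(X)-\Psi(Y))=\psi(N(X-Y))\,\psi(1)$ for all $X\neq Y$ (because $(X,N(X-Y))$ is adjacent to $(-Y,1)$ and $\Psi(-Y)=-\Psi(Y)$), and then invokes Lenstra's theorem (\cite[Theorem~2]{L}) on permutations of finite fields compatible with a norm-type homomorphism; this is the step you have no substitute for. Without Lenstra's theorem or an equivalent rigidity result, the proof does not close, and the paper's Lemma~\ref{lemma:KernelIsField} (needed to check Lenstra's kernel hypothesis for $t>2$) and the separate treatment of $t=2$ (where that hypothesis fails) are further details your sketch does not anticipate.

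A couple of smaller inaccuracies: the scaling subgroup $\{(X,x)\mapsto(C^2X,N(C)x)\}$ is \emph{not} $\fqt^*/\{\pm1\}$ in general; it is the image of $C\mapsto\sigma_C$, whose kernel is $\{\pm1\}$ precisely when $N(-1)=1$, i.e.\ when $t-1$ is even (you partially acknowledge this but your first sentence on the subgroup is misleading). Also, your proposed mechanism for forcing $A_0=0$ in odd characteristic via loop vertices does not work as stated: $A=0$ is not the unique first coordinate carrying no loop vertices (roughly half of all first coordinates carry none). The paper's route --- that $\overline{S}_X$ and $\overline{S}_{-X}$ are characterized as the unique pair of mutually non-adjacent fibers, whence $\Psi(-X)=-\Psi(X)$ and thus $2A_0=0$ --- is what actually does the job. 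These are fixable, but the missing Lenstra-type rigidity lemma is the essential hole.
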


\vspace{0.2cm}

Note that if $q=2$ then $\NG(2,t)$ is a complete graph on $2^{t-1}$ vertices, and so $\Aut(\NG(2,t))$ is the whole symmetric group of order $2^{t-1}$.
%When $q>2$ and $t=2$ we strongly believe that the description
 %of the automorphism group in the theorem is still valid, but our method is not working. 

%Theorem \ref{thm:aut} implies nice symmetries of projective norm
%graphs and certain automorphisms will be used in some of the proofs as well. 

\vspace{0.5cm}

\paragraph{Organization of the paper.}
This paper is organized as follows. In
Section~\ref{sec:common-neighborhoods} we prove parts $(a)$ and $(b)$
of Theorem~\ref{thm:main} and Theorem~\ref{thm:K46}, where the first
two play an integral part in the third. By using a completely different
machinery, we prove part $(c)$ of Theorem~\ref{thm:main} 
in Section~\ref{sec:quadruples}. 
In Section~\ref{sec:applications} we apply part $(a)$ and $(b)$ of
Theorem~\ref{thm:main} to show Theorem~\ref{thm:subgraph}.
Theorem~\ref{thm:aut} will be handled in
Section~\ref{sec:auto} using an algebraic theorem of Lenstra.
In Section~\ref{sec:conclusion} we conclude with some remarks
and propose a few intriguing open questions. Finally, in the Appendix
we present some of the technical calculations and collect a few
standard algebraic facts for the convenience of the reader.

\section{Common Neighborhoods}\label{sec:common-neighborhoods}

In this section we first prove a useful lemma reformulating
the common degree of an arbitrary vertex set, which will be used
throughout the paper.

Let $\ell \geq 2$ be an integer. For a generic vertex set $U=\men{(A_i,a_i)\mid i\in [\ell]}\subseteq
V(\NG(q,t))$ of size $\ell$ and every $i\in [\ell -1]$ we define elements
$$B_i = B_i(U) := \kw{A_i-A_\ell} \in \mathbb{F}_{q^{t-1}}^* \ \ \
\ \text{and } \ \ \ b_i = b_i(U):=
\frac{a_i}{a_\ell} \cdot N(B_i) \in \mathbb{F}_q^*.$$
Note that as $U$ is a generic set, the $B_i$s are indeed well-defined.
Furthermore as they are non-zero, the $b_i$s are not zero either. 
The equation system 
\begin{equation}\label{neighboureqB}
N\kl{Y + B_i} = b_i \quad \forall i\in
  [\ell -1]
\end{equation}
is strongly related to the one definig the common neighbourhood of $U$.
Let $H(U)$ be the solution set of (\ref{neighboureqB}), i.e.
\begin{equation*} 
H(U) := \men{ Y\in\fqt \mid N\kl{Y + B_i} = b_i, \quad i\in
  [\ell -1] }.
\end{equation*}
\begin{lemma} \label{lemma:DegreeHelper}Let $\ell\geq 2$ be an integer. For any generic vertex set $U=\men{(A_i,a_i)\mid i\in [\ell]}\subseteq
V(\NG(q,t))$ of size $\ell$ we have 
\begin{equation*} 
\deg(U)=|H(U)\setminus \{0\}|.
\end{equation*}
In particular 
\begin{equation*}
\deg(U) = \absw{ \anz{H(U)} - 1 &\text{if $U$ is aligned,}\\
%  a_1=a_2=\dots=a_{\ell}, \\ 
\anz{H(U)} &\text{otherwise.}}
\end{equation*}
\end{lemma}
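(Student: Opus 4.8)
The plan is to unwind the definition of the common neighborhood directly in terms of the norm equation defining adjacency in $\NG(q,t)$, and then perform the affine change of variables that produces the system \eqref{neighboureqB}. Let $U = \{(A_i, a_i) : i \in [\ell]\}$ be a generic vertex set, and let $(X, x)$ be an arbitrary vertex of $\NG(q,t)$. By definition $(X,x) \in \mathcal{N}(U)$ if and only if $N(X + A_i) = x a_i$ holds for every $i \in [\ell]$. First I would observe that the first coordinate is what matters: since $U$ is generic the $A_i$ are pairwise distinct, but more to the point, each such equation forces $X \neq -A_i$ (otherwise $0 = x a_i$, contradicting $x, a_i \neq 0$), so in fact $X + A_i \in \fqt^*$ for all $i$ once even one equation holds. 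I would then use the multiplicativity of the norm (Lemma~\ref{lemma:NormProperties} of the Appendix) to divide the $i$-th equation by the $\ell$-th one: $(X,x) \in \mathcal{N}(U)$ if and only if $x = a_\ell^{-1} N(X + A_\ell)$ together with
\[
\frac{N(X+A_i)}{N(X+A_\ell)} = \frac{a_i}{a_\ell}, \qquad i \in [\ell-1].
\]
Crucially, the value of the second coordinate $x$ is then \emph{determined} by $X$ via the $\ell$-th equation, so counting common neighbors reduces to counting admissible first coordinates $X$.

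**The change of variables.**

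Next I would substitute $X = Y(A_\ell - X)$ — or more transparently, introduce the new variable. Actually the cleanest route: from $\frac{N(X+A_i)}{N(X+A_\ell)} = \frac{a_i}{a_\ell}$ and the multiplicativity of $N$, write $N\!\left(\frac{X+A_i}{X+A_\ell}\right) = \frac{a_i}{a_\ell}$, then $\frac{X+A_i}{X+A_\ell} = 1 + \frac{A_i - A_\ell}{X + A_\ell}$. Setting $Y := \frac{1}{X + A_\ell}$ (legitimate since $X + A_\ell \in \fqt^*$, and this is a bijection between $X \in \fqt \setminus \{-A_\ell\}$ and $Y \in \fqt^*$), the condition becomes $N(1 + (A_i - A_\ell) Y) = \frac{a_i}{a_\ell}$, i.e. $N\big((A_i - A_\ell)(Y + B_i)\big) = \frac{a_i}{a_\ell}$ where $B_i = (A_i - A_\ell)^{-1}$. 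Multiplicativity gives $N(A_i - A_\ell)\, N(Y + B_i) = \frac{a_i}{a_\ell}$, hence $N(Y+B_i) = \frac{a_i}{a_\ell} N(B_i) = b_i$, recovering \eqref{neighboureqB} exactly. So the map $(X,x) \mapsto Y = (X+A_\ell)^{-1}$ is a bijection from $\mathcal{N}(U)$ onto $\{Y \in \fqt^* : N(Y + B_i) = b_i, \ i \in [\ell-1]\} = H(U) \cap \fqt^* = H(U) \setminus \{0\}$ — note that $Y \ne 0$ is automatic from the domain, but I should also check that no point of $H(U)$ other than $0$ is excluded, which holds since the correspondence $Y \leftrightarrow X$ is a genuine bijection $\fqt^* \leftrightarrow \fqt \setminus\{-A_\ell\}$ and the image of $\mathcal N(U)$ lands in $H(U)\setminus\{0\}$ while every nonzero element of $H(U)$ pulls back. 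This yields $\deg(U) = |H(U) \setminus \{0\}|$.

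**The loop/alignment refinement.**

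For the second displayed formula I would determine exactly when $0 \in H(U)$: this happens iff $N(0 + B_i) = b_i$ for all $i$, i.e. $N(B_i) = b_i = \frac{a_i}{a_\ell} N(B_i)$, i.e. $\frac{a_i}{a_\ell} = 1$ for all $i \in [\ell-1]$, which is precisely the condition that all $a_i$ are equal — that is, $U$ is aligned. Hence $|H(U) \setminus \{0\}| = |H(U)| - 1$ when $U$ is aligned and $|H(U)|$ otherwise, giving the case distinction. I expect no real obstacle here; the only points requiring care are (i) justifying that $X + A_i \neq 0$ for \emph{all} $i$ whenever $(X,x)$ satisfies even the single $\ell$-th adjacency equation (so that all the division steps and the substitution $Y = (X+A_\ell)^{-1}$ are legitimate), and (ii) making sure the bijection is set up on the correct domain so that loops are handled consistently — recall $\NG(q,t)$ is being treated \emph{with} loops, so a loop vertex $(A,a)$ with $N(2A) = a^2$ counts as its own neighbor, and one should check this is compatible with the $Y = 0$ bookkeeping above (it is: $(X,x) = (A_\ell, a_\ell)$ corresponds to $Y$ undefined, i.e. falls outside $\fqt^*$, so it is not a neighbor of $U$ via this equation unless $\ell=1$, which is excluded). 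The genuinely substantive content is entirely in the first two paragraphs; the third is routine verification.
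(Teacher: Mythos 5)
Your proof is correct and takes essentially the same approach as the paper: both use the fractional linear substitution $Y = (X+A_\ell)^{-1}$ to establish a bijection between $\mathcal{N}(U)$ and $H(U)\setminus\{0\}$, and both observe that $0 \in H(U)$ precisely when $U$ is aligned. The one cosmetic difference is that you establish injectivity directly by noting $x$ is determined by $X$ via the $\ell$-th equation, whereas the paper deduces the same fact by remarking that $\mathcal{N}(U)$ is itself generic; these are two phrasings of the same observation.
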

\begin{proof}
By definition
\begin{equation*}
\mathcal{N}(U)  = \men{ (X,x) \in \fqt \times\fqu \mid \forall i\in [\ell]: N(X+A_i)=a_i\cdot x }.
\end{equation*}
For $(X,x)\in \mathcal{N}(U)$ define $\displaystyle
\phi((X,x))=\frac{1}{X+A_\ell}$.
Note that $\phi((X,x))$ is well-defined, since  
$N(X+A_\ell) = a_\ell \cdot x \neq 0$, hence $X+A_\ell \neq 0$ as well.
We show that $\phi$ is a bijection
between $\mathcal{N}(U)$ and $H(U)\setminus \{ 0\}$.

Clearly $ \phi((X,x)) \neq 0$. Now for  $i\in [\ell-1]$ we have
\begin{equation*}
N(\phi((X,x))+B_i)=N\kl{\kw{A_\ell +X}+B_i} %= N\kl{\kw{(A_n-X)\cdot B_i}+1} \cdot N(B_i)
%\end{equation*}
%
%\begin{equation*}
%= N\kl{\frac{A_i-A_n}{A_n-X}+1} \cdot N(B_i) 
= N\kl{\frac{A_i+X}{A_\ell +X}} \cdot N(B_i) = \frac{a_i}{a_\ell}  \cdot N(B_i)=b_i,
\end{equation*}
hence, $\phi((X,x))\in H\ohne{0}$.  

As $\mathcal{N}(\mathcal{N}(U))
\supseteq U \neq \emptyset$, the neighborhood
$\mathcal{N}(U)$ is generic, so $\phi$ is injective. 
For the surjectivity of $\phi$ let $Y\in H\backslash\{0\}$. We show
that $\displaystyle \kl{\frac{1}{Y}-A_\ell,\frac{1}{a_{\ell}\cdot N(Y)}}\in
\mathcal{N}(U)$. Note that this is sufficient 
as this vertex is clearly in $\phi^{-1}(Y)$. Indeed for $i\in [\ell-1]$ we have
\begin{equation*}
N\kl{A_i + \kl{\kw{Y}-A_\ell}}= N\kl{\kw{B_i}+\kw{Y}} = N\kl{\frac{Y+B_i}{B_iY}} = \frac{b_i}{N(B_i)N(Y)}= \frac{1}{a_\ell\cdot N(Y)}  \cdot a_i,
\end{equation*}
and for $i=\ell$ we have
\begin{equation*}
N\kl{A_\ell-\kl{\kw{Y}-A_\ell}}= N\kl{\kw{Y}} = \frac{1}{a_\ell\cdot N(Y)}\cdot a_\ell. 
\end{equation*}

For the second statement of the lemma note that $0\in H$ if and only
if $N(B_i)=b_i=\frac{a_i}{a_{\ell}}\cdot N(B_i)$ for every $i\in [\ell
-1]$, which in turn is equivalent to $\ a_1=a_2=\cdots=a_{\ell}$.

\end{proof}

\subsection{Generic pairs}\label{subsec:pairs}

% For a vertex set $U=\men{(A_i,a_i)\mid i\in [\ell]}\subseteq
% V(NG(q,t))$ the common neighbors are characterized by being the 
% solution $(X,x)$ of the $\ell$ equations $N(X+A_i) = a_ix$.
% We transform this system so to only care for $\ell -1$ equations.

As a simple application of Lemma \ref{lemma:DegreeHelper} we 
prove part $(a)$ of Theorem \ref{thm:main} about the common degree of
generic pairs.

\begin{proof}[Proof of Theorem~\ref{thm:main}(a)]
To use Lemma~\ref{lemma:DegreeHelper} we have to compute $|H(T)|$,
i.e. the number of solutions $Y\in\fqt$ to the equation $N\kl{Y + B}
= b$ where $\displaystyle B = \kw{A_1-A_2}$ and $\displaystyle b =
\frac{a_1}{a_2} \cdot N(B)$. 

By part (f) of \Cref{lemma:NormProperties}, the number of elements $Y$
in the set $N^{-1}(b) -B$ is precisely $\frac{q^{t-1}-1}{q-1}$, so 
part (a) of Theorem~\ref{thm:main}  follows from the second statement of Lemma~\ref{lemma:DegreeHelper}.
\qedhere
\end{proof}

\subsection{Generic triples}\label{triple subsect}

In this subsection we investigate the common neighbourhood of
generic vertex triples and prove part $(b)$ of Theorem \ref{thm:main}.

Let $T=\men{(A_1,a_1),(A_2,a_2),(A_3,a_3)}\subseteq V$ be a
generic triple of vertices in $\NG(q,t)$. Starting from
Lemma~\ref{lemma:DegreeHelper}, 
we give yet another formulation of the common degree $\deg(T)$. 
%To this end we introduce 
%We reformulate further its common degree $\deg (T)$. 
To this end recall from the statement of Theorem \ref{thm:main} that 
\begin{equation*} 
c_1(T)=\frac{a_1}{a_3}\cdot N\kl{\frac{A_2-A_3}{A_1-A_2}}\in \mathbb{F} _q\ \ \text{
  and } \ \ c_2(T)=
\frac{a_2}{a_3}\cdot N\kl{\frac{A_1-A_3}{A_1-A_2}}\in \mathbb{F} _q,
\end{equation*}
and define the set 
\begin{equation*}
S(T) : = \{ X\in \fqt : N(X) = c_1(T) \mbox{\ and \ }	N(X+1) =
c_2(T) \}.
\end{equation*} 
\begin{lemma}\label{lemma:reduction} For every generic triple
  $T=\men{(A_1,a_1),(A_2,a_2),(A_3,a_3)}\subseteq V$ we have 
\begin{equation*}
\deg(T) = \absw{ \anz{S(T)} - 1 &\text{if } a_1=a_2=a_3  \\ \anz{S(T)} &\text{otherwise}}
\end{equation*}
\end{lemma}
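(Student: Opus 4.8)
The plan is to start from Lemma~\ref{lemma:DegreeHelper}, which reduces $\deg(T)$ to $|H(T)\setminus\{0\}|$, and then construct an explicit bijection between $H(T)$ and $S(T)$. Recall that with $\ell=3$ and $A_3$ playing the role of $A_\ell$, we have $B_1 = (A_1-A_3)^{-1}$, $B_2 = (A_2-A_3)^{-1}$, $b_i = \tfrac{a_i}{a_3} N(B_i)$, and $H(T) = \{Y\in\fqt : N(Y+B_1)=b_1,\ N(Y+B_2)=b_2\}$. The idea is to apply an affine substitution of the form $Y = \alpha X + \beta$ that sends $B_1$ and $B_2$ to $0$ and $-1$ respectively (or some fixed pair), so that the two norm equations become $N(X)=c_1$ and $N(X+1)=c_2$. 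Concretely, since $N$ is multiplicative, writing $Y+B_1 = (B_2-B_1)\,X'$ for a new variable $X'$ turns $N(Y+B_1)=b_1$ into $N(X') = b_1/N(B_2-B_1)$, and $Y+B_2 = (B_2-B_1)(X'+1)$ turns $N(Y+B_2)=b_2$ into $N(X'+1) = b_2/N(B_2-B_1)$. So the substitution $X := (Y+B_1)/(B_2-B_1)$ is the natural candidate, and it is a bijection $\fqt\to\fqt$ because $B_2-B_1\neq 0$ (the $A_i$ are distinct, hence $B_1\neq B_2$).

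The main computational step is then to verify that the two resulting constants are exactly $c_1(T)$ and $c_2(T)$ as defined before the lemma. That is, one must check
\begin{equation*}
\frac{b_1}{N(B_2-B_1)} = \frac{a_1}{a_3}\cdot N\!\left(\frac{A_2-A_3}{A_1-A_2}\right) = c_1(T), \qquad \frac{b_2}{N(B_2-B_1)} = \frac{a_2}{a_3}\cdot N\!\left(\frac{A_1-A_3}{A_1-A_2}\right) = c_2(T).
\end{equation*}
Substituting $B_i = (A_i-A_3)^{-1}$ gives $B_2 - B_1 = \tfrac{(A_1-A_3)-(A_2-A_3)}{(A_1-A_3)(A_2-A_3)} = \tfrac{A_1-A_2}{(A_1-A_3)(A_2-A_3)}$, so $N(B_2-B_1) = \tfrac{N(A_1-A_2)}{N(A_1-A_3)N(A_2-A_3)}$. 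Then
\begin{equation*}
\frac{b_1}{N(B_2-B_1)} = \frac{\tfrac{a_1}{a_3}N(A_1-A_3)^{-1}}{\tfrac{N(A_1-A_2)}{N(A_1-A_3)N(A_2-A_3)}} = \frac{a_1}{a_3}\cdot\frac{N(A_2-A_3)}{N(A_1-A_2)} = c_1(T),
\end{equation*}
and symmetrically for $c_2(T)$ (interchanging the roles of indices $1$ and $2$). This is routine once set up, and I do not expect an obstacle here — the only care needed is keeping track of which difference goes in the numerator, which is exactly why the definitions of $c_1,c_2$ are ``asymmetric'' in the way they are.

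Having established that $X\mapsto (B_2-B_1)X - B_1 =: Y$ is a bijection $S(T)\to H(T)$, the first statement reduces $\deg(T)$ to $|S(T)|$ if $0\notin H(T)$, and to $|S(T)|-1$ if $0\in H(T)$. By Lemma~\ref{lemma:DegreeHelper}, $0\in H(T)$ is equivalent to $a_1=a_2=a_3$, which finishes the proof. (Alternatively, one can note that $0\in H(T)$ corresponds under the bijection to the element $X_0 = B_1/(B_2-B_1)\in S(T)$, and trace through directly; but invoking the second statement of Lemma~\ref{lemma:DegreeHelper} is cleaner.) The only genuinely delicate point in the whole argument is making sure the affine map is the right one and is a genuine bijection — everything else is bookkeeping with the multiplicativity of $N$.
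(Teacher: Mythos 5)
Your proof is correct and takes essentially the same route as the paper: the map $\phi(Y) = (Y+B_1)/(B_2-B_1)$ is exactly the bijection $\fqt\to\fqt$ used in the paper's argument (the paper presents it in the forward direction $H(T)\to S(T)$ rather than its inverse, and expresses the constants as $c_1=(a_1/a_3)N(B_1/(B_2-B_1))$ instead of expanding $N(B_2-B_1)$, but these are cosmetic differences), and both proofs close by invoking the second part of Lemma~\ref{lemma:DegreeHelper}.
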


\begin{proof}

Recall that
\begin{equation*}
H(T) = \men{ Y\in\fqt \mid N(Y+B_i) = b_i \quad\text{for}\quad i=1,2}
\end{equation*}
where $\displaystyle B_i = \kw{A_i-A_3}$ and $\displaystyle b_i =
\frac{a_i}{a_3} \cdot N(B_i)$ for $i=1,2$. 

Once we prove $|S(T)| = |H(T)|$,  \Cref{lemma:DegreeHelper}
delivers the statement of Lemma~\ref{lemma:reduction}.
For every $Y\in \fqt$ we define $\phi(Y)=\displaystyle
\frac{Y+B_1}{B_2-B_1}$. Note that as $T$ is generic, we have $B_1\neq
B_2$, so $\phi$ is well defined. Furthermore $\phi$, as a linear
function, is clearly a
bijection from $\fqt$ to $\fqt$. Hence to establish $|H(T)|=|S(T)|$,
all we need to show is that
$Y\in H(T)$ if and only if $\phi(Y) \in S(T)$. This equivalence holds
because each pair of corresponding equations are equivalent.

On the one hand $c_1(T) = \frac{a_1}{a_3}\cdot
N\kl{\frac{A_2-A_3}{A_1-A_2}} = \frac{a_1}{a_3} \cdot
N\kl{\frac{B_1}{B_2-B_1}}$ is equal to $N\kl{\frac{Y + B_1}{B_2-B_1}}
= N(\phi(Y))$ if and only if  $b_1 = N\kl{Y + B_1}$. 

On the other hand $c_2(T) = \frac{a_2}{a_3}\cdot
N\kl{\frac{A_1-A_3}{A_1-A_2}} = \frac{a_2}{a_3} \cdot N\kl{\frac{B_2}{B_2-B_1}}$
is equal to $N\kl{\frac{Y + B_2}{B_2-B_1}} = N\kl{\phi(Y)+1}$ if and
only if $b_2 = N\kl{Y + B_2}$.
\end{proof}
For our analysis we classify generic triples according to the pair
$(c_1(T),c_2(T)) \in \kl{\fqu}^2$. Accordingly for $(c_1, c_2)\in\kl{\fqu}^2$ let
\begin{equation*}
S_t(c_1,c_2)=\{X\in\fqt\ :\ N(X) = c_1 \mbox{ and }N(X+1) = c_2\}.
\end{equation*}
Note that with this notation Lemma \ref{lemma:reduction} just says that if $T\subseteq V$ is a generic triple, then $\deg(T)$ is either $|S_t(c_1(T),c_2(T))|$ or $|S_t(c_1(T),c_2(T))|-1$.

Before coming to the actual proof of part $(b)$ of
Theorem~\ref{thm:main} we still need a crucial preparatory
step. For $(c_1, c_2)\in\kl{\fqu}^2$ and $t\geq 3$ we will define a
polynomial that is strongly related to the defining equations of
$S(T)$, and whose roots we are able to ``locate'' and count.

For this definition we will use norms on two different fields. To distinguish them, for
$t\geq 1$, we put
\begin{equation*}
n_t(Y) = Y^{q^{t-1}+q^{t-2}+\dots+1}\in\fq[Y].
\end{equation*}
For $t\geq 3$ and $(c_1, c_2)\in\kl{\fqu}^2$ we define the polynomial
\begin{equation*}
f_{t,c_1,c_2}(Y)= n_{t-2}(Y+1)\cdot n_{t-2}(Y) + c_1 \cdot n_{t-2}(Y+1) - c_2 \cdot n_{t-2}(Y) \in\fq[Y]. 
\end{equation*}
%
%Recall that for $X\in\zb F_{q^t}$, $n_t(X)$ is just the $\fq$-norm of
%$X$. 
Denote by $R_t(c_1,c_2)$ the set of roots of $f_{t,c_1,c_2}$ in the
algebraic closure $\fqc$ of $\fq$, and by $R^*_t(c_1,c_2)\subseteq
R_t(c_1,c_2)$ the set of multiple roots among them.
In the following lemma we connect the elements of $S_t(c_1, c_2)$ (the
common roots of the equation system $N(X)=c_1, N(X+1)=c_2$) to the 
roots in $R_t(c_1, c_2)$. It turns out that every root of $f_{t,c_1,
c_2}$ is contained in the union of the fields $\mathbb{F}_{q^{t-1}}$ and $\mathbb{F}_{q^{t-2}}$. 
Furthermore all multiple roots are contained in the intersection 
$\fqt \cap \mathbb{F}_{q^{t-2}} = \fq$ and have multiplicity  two.

\begin{lemma}\label{lemma:TripleNeighRec}
For $(c_1,c_2)\in\kl{\fqu}^2$ and $t\geq 3$ we have	

\begin{enumerate}[(i)]
\item $S_t(c_1,c_2) \subseteq R_t(c_1,c_2)$;
\item $S_t(c_1,c_2) \cap \fq = R_t^*(c_1,c_2)$;
\item $\anz{S_t(c_1,c_2)}+\anz{R_t(c_1,c_2)\cap\fqq{t-2}} = 2(q^{t-3}+q^{t-4}+\dots+1)$.
\item For $t\geq 4$ 
\begin{equation*}
\anz{S_t(c_1,c_2)} = 2(q^{t-3}+q^{t-4}+\dots+1) - \fsum{b\in\fqu\ohne{-c_1}}{}\anz{S_{t-1}\kl{b,\frac{bc_2}{b+c_1}}}.
\end{equation*}
\end{enumerate}

\end{lemma}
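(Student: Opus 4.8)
The plan is to work with the polynomial $f_{t,c_1,c_2}$ and to interpret the norm conditions $N(X)=c_1$, $N(X+1)=c_2$ directly in terms of $n_{t-1}$. Observe first that for $X\in\fqt$ we have $N(X)=n_{t-1}(X)$, and that $n_{t-1}(Y)=Y\cdot n_{t-2}(Y)^q$ — more precisely $n_{t-1}(Y)=Y^{1+q(1+q+\dots+q^{t-3})}$, so one has the Frobenius-twisted factorisation $n_{t-1}(Y)=Y\cdot\bigl(n_{t-2}(Y)\bigr)^{q}$ only after being careful; the clean identity I will actually use is $n_{t-1}(Y) = n_{t-2}(Y)\cdot Y^{q^{t-2}}$, and combined with $N(X)=c_1\in\fq$ this lets me eliminate the top power of Frobenius.

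For \textbf{(i)}: take $X\in S_t(c_1,c_2)$. From $N(X)=c_1$ and $N(X+1)=c_2$ I want to derive $f_{t,c_1,c_2}(X)=0$. The idea is that $n_{t-1}(X)=c_1$ forces $X^{q^{t-2}} = c_1/n_{t-2}(X)$ (valid since $c_1\neq 0$ implies $X\neq 0$), and similarly $(X+1)^{q^{t-2}} = c_2/n_{t-2}(X+1)$ (valid since $c_2\neq 0$ implies $X+1\neq 0$). Subtracting these two identities, the left-hand side is $X^{q^{t-2}}-(X+1)^{q^{t-2}} = -1$ because $q^{t-2}$ is a power of the characteristic and $1^{q^{t-2}}=1$. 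Clearing denominators in $c_1/n_{t-2}(X) - c_2/n_{t-2}(X+1) = -1$ gives exactly $c_1 n_{t-2}(X+1) - c_2 n_{t-2}(X) + n_{t-2}(X)n_{t-2}(X+1) = 0$, i.e. $f_{t,c_1,c_2}(X)=0$. So $S_t(c_1,c_2)\subseteq R_t(c_1,c_2)$.

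For \textbf{(ii)} and the location of roots: I factor the picture according to whether a root $Y$ of $f_{t,c_1,c_2}$ makes $n_{t-2}(Y)$ and $n_{t-2}(Y+1)$ behave like genuine norms. The key is to multiply: one checks (this is the routine computation I will spell out) that $f_{t,c_1,c_2}(Y)$ divides, or is related by a short manipulation to, the product $\bigl(n_{t-1}(Y)-c_1\bigr)\cdot\text{(something)}$; more honestly, I will run the argument of (i) backwards. If $Y\in\fqc$ is a root with $Y, Y+1\neq 0$, set $u=n_{t-2}(Y)$, $v=n_{t-2}(Y+1)$; then $uv + c_1 v - c_2 u = 0$, so $c_1/u - c_2/v = -1$ provided $u,v\neq 0$. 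Now $Y^{q^{t-2}} = n_{t-1}(Y)/n_{t-2}(Y) = n_{t-1}(Y)/u$ is a formal identity in $\fqc$, and I want to conclude $n_{t-1}(Y) = c_1$. This works when $Y\in\fqt$: then $n_{t-1}(Y)=N(Y)\in\fq$, raising to $q^{t-2}$ does nothing new, and one deduces $N(Y)=c_1$, $N(Y+1)=c_2$, i.e. $Y\in S_t(c_1,c_2)$. The remaining roots must then lie in $\fqq{t-2}$ (or be $0$, $-1$, handled separately since $c_1,c_2\neq0$ exclude these): on $\fqq{t-2}$ the map $n_{t-2}$ is the genuine $\fq$-norm, and one argues by a symmetric Frobenius-elimination using $q^{t-3}$ that $f_{t,c_1,c_2}(Y)=0$ with $Y\in\fqq{t-2}$ is equivalent to a pair of norm equations over $\fqq{t-2}$, which has solutions. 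This dichotomy $R_t(c_1,c_2)\subseteq \fqt\cup\fqq{t-2}$ is the geometric content. Since $\fqt\cap\fqq{t-2}=\fq$ (as $\gcd(t-1,t-2)=1$), a multiple root of $f_{t,c_1,c_2}$ — lying in both factors — must lie in $\fq$, and conversely every $Y\in S_t(c_1,c_2)\cap\fq$ is also a root in $\fqq{t-2}$, hence a common root of the two ``branch'' polynomials, hence multiple. A degree/derivative count (the derivative $f'_{t,c_1,c_2}$ has controlled degree) shows such roots have multiplicity exactly two. This gives (ii).

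For \textbf{(iii)}: count with multiplicity. The polynomial $f_{t,c_1,c_2}$ has degree $\deg n_{t-2} \cdot 2 = 2(q^{t-3}+q^{t-4}+\dots+q+1)$ — since $n_{t-2}$ has degree $q^{t-3}+\dots+1$ and the product term $n_{t-2}(Y+1)n_{t-2}(Y)$ dominates, its leading term being $Y^{2(q^{t-3}+\dots+1)}$, and this is not cancelled. So the multiset of roots in $\fqc$ has size $2(q^{t-3}+\dots+1)$. By (i) and the dichotomy, the underlying set of roots is the disjoint-ish union $S_t(c_1,c_2)\sqcup\bigl(R_t(c_1,c_2)\cap\fqq{t-2}\bigr)$ with the overlap being exactly $S_t(c_1,c_2)\cap\fq = R_t^*(c_1,c_2)$, and by (ii) each overlap point is counted twice. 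Hence $\#\{\text{roots with multiplicity}\} = \anz{S_t(c_1,c_2)} + \anz{R_t(c_1,c_2)\cap\fqq{t-2}}$, which equals the degree. That is (iii). (I will double-check that no root of $f_{t,c_1,c_2}$ outside $\fq$ is multiple, using that $f_{t,c_1,c_2}$ restricted to each branch is separable because the corresponding norm-equation systems have distinct solutions; and that $0$, $-1$ are not roots since $f_{t,c_1,c_2}(0)=c_1\neq0$, $f_{t,c_1,c_2}(-1)=-c_2\neq0$.)

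For \textbf{(iv)}: here is where the recursion enters. I want to evaluate $\anz{R_t(c_1,c_2)\cap\fqq{t-2}}$ and rewrite it as a sum of $\anz{S_{t-1}(\cdot,\cdot)}$. Take $Y\in\fqq{t-2}$ with $f_{t,c_1,c_2}(Y)=0$; then $n_{t-2}(Y)=N'(Y)$ and $n_{t-2}(Y+1)=N'(Y+1)$ are the genuine $\fq$-norms on $\fqq{t-2}$, both lying in $\fq$. Put $b=N'(Y)\in\fq$; note $b\neq 0$ (since $c_1\neq 0$ forces $Y\neq 0$ — indeed plugging $Y=0$ gives $c_1=0$) and $b\neq -c_1$ (otherwise the equation $f_{t,c_1,c_2}(Y)=0$ reads $0\cdot n_{t-2}(Y+1) = c_2 b$, forcing $c_2 b=0$, impossible). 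Solving $f_{t,c_1,c_2}(Y)=0$ for $N'(Y+1)$: $N'(Y+1)\cdot(b + c_1) = c_2 b$, so $N'(Y+1) = \dfrac{bc_2}{b+c_1}$. Thus $Y\in S_{t-1}\bigl(b, \tfrac{bc_2}{b+c_1}\bigr)$, and conversely any such $Y$ is a root of $f_{t,c_1,c_2}$ lying in $\fqq{t-2}$. Partitioning $R_t(c_1,c_2)\cap\fqq{t-2}$ by the value $b\in\fqu\setminus\{-c_1\}$ yields
\begin{equation*}
\anz{R_t(c_1,c_2)\cap\fqq{t-2}} = \sum_{b\in\fqu\setminus\{-c_1\}}\anz{S_{t-1}\!\left(b,\tfrac{bc_2}{b+c_1}\right)}.
\end{equation*}
Substituting this into (iii) gives exactly the claimed identity in (iv). (Note $t\geq 4$ is needed so that $\fqq{t-2}$ is a proper field over $\fq$ and $S_{t-1}$ is defined.)

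\textbf{Main obstacle.} The delicate point is establishing the dichotomy in (ii) — that \emph{every} root of $f_{t,c_1,c_2}$ lies in $\fqt\cup\fqq{t-2}$, and that roots off $\fq$ are simple. The forward inclusion $S_t\subseteq R_t$ is a one-line Frobenius computation, but the converse ``a root in $\fqt$ of $f$ actually satisfies both norm equations'' requires using $N(Y)\in\fq$ to undo the $q^{t-2}$-th power (which is genuinely lossy over $\fqc$), and then checking nothing is lost — and separately ruling out roots in no common extension of the two fields. I would handle this by a clean algebraic identity expressing $f_{t,c_1,c_2}(Y)$ (up to a unit and the harmless factors $Y$, $Y+1$) as the resultant-type product that vanishes precisely on $\{N_{\fqt}(Y)=c_1\}\cup\{N_{\fqq{t-2}}(Y)=c_1\}$ intersected appropriately — essentially factoring $f$ over $\fqt$ and over $\fqq{t-2}$ and comparing degrees — so that the containment and separability fall out of a degree count rather than case analysis. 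Parts (i), (iii), (iv) are then bookkeeping on top of (ii).
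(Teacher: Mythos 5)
Your part~(i) matches the paper's computation and is fine, and part~(iv) is the same classification-by-norm argument the paper uses. The gap is in parts~(ii) and~(iii), and it is a genuine one: your entire argument rests on two claims you never establish, namely the \emph{dichotomy} that every root of $f_{t,c_1,c_2}$ in $\fqc$ lies in $\fqt\cup\fqq{t-2}$, and the \emph{multiplicity control} that roots outside $\fq$ are simple while roots in $S_t(c_1,c_2)\cap\fq$ have multiplicity exactly two. You are candid about this in the ``Main obstacle'' paragraph, but what you offer there is a plan (``a clean algebraic identity expressing $f_{t,c_1,c_2}$\dots as the resultant-type product\dots''), not a proof; no such factorisation is exhibited, and it is not at all clear one exists in usable form. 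The dichotomy and the multiplicity structure are in fact \emph{consequences} of the equality in~(iii), so taking them as inputs to a proof of~(iii) is circular unless you supply an independent argument.

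The paper does something structurally different that sidesteps this issue. It only proves \emph{one-sided} containments pointwise: $S_t\subseteq R_t$ (your part~(i)) and $S_t\cap\fq\subseteq R_t^*$ (a short formal-derivative computation). From these, for each fixed $(c_1,c_2)$, it deduces the inequality
\[
\anz{S_t(c_1,c_2)}+\anz{R_t(c_1,c_2)\cap\fqq{t-2}}\ \leq\ \anz{R_t(c_1,c_2)}+\anz{R_t^*(c_1,c_2)}\ \leq\ \deg f_{t,c_1,c_2}=2(q^{t-3}+\dots+1),
\]
by a union--intersection count plus the trivial fact that the number of distinct roots plus the number of repeated roots is at most the degree. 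It then \emph{averages} over all $(c_1,c_2)\in(\fqu)^2$: a clean double count (every $X\in\fqt\setminus\{0,-1\}$ lies in exactly one $S_t$, and for each fixed $X\in\fqq{t-2}$ and $c_1$ the condition $X\in R_t(c_1,c_2)$ is a linear equation in $c_2$) shows the average of the left-hand side equals $2(q^{t-3}+\dots+1)$ exactly. Since no individual term exceeds this bound and the average attains it, every term attains it, which proves~(iii) and, as a by-product, the reverse inclusion $R_t^*\subseteq S_t\cap\fq$ needed for~(ii). This averaging step is the key idea you are missing; without it, or without an explicit factorisation of $f_{t,c_1,c_2}$ that you do not supply, your proof of~(ii)--(iii) does not close.
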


\begin{proof}
First we prove part $(i)$. Let $X\in S_t(c_1,c_2)$, that is
\begin{equation*}
c_1 = n_{t-1}(X) \mbox{\ and\ }c_2 = n_{t-1}(X+1).
\end{equation*}
Multiplying the equations by $n_{t-2}(X+1)$ and $n_{t-2}(X)$, respectively, and subtracting them from one another we obtain
\begin{equation*}
c_1 n_{t-2}(X+1) - c_2 n_{t-2}(X) = n_{t-1}(X)n_{t-2}(X+1) - n_{t-1}(X+1)n_{t-2}(X).
\end{equation*}
By substituting $n_{t-1}(X)=n_{t-2}(X) X^{q^{t-2}}$ and $n_{t-1}(X+1)=n_{t-2}(X+1) (X+1)^{q^{t-2}}$ we get
\begin{equation*}
c_1 n_{t-2}(X+1) - c_2 n_{t-2}(X) = n_{t-2}(X)n_{t-2}(X+1) X^{q^{t-2}} - n_{t-2}(X+1)n_{t-2}(X) (X+1)^{q^{t-2}}
\end{equation*}
\begin{align*}
&= n_{t-2}(X+1)n_{t-2}(X)\kl{ X^{q^{t-2}} - (X+1)^{q^{t-2}} }%\\
%&= n_{t-2}(X+1)n_{t-2}(X)\kl{ X^{q^{t-2}} - (X^{q^{t-2}}+1) }
= n_{t-2}(X+1)n_{t-2}(X) (-1).
\end{align*}
This proves that $X$ is a root of $f_{t,c_1,c_2}$, i.e. $X\in R_t(c_1,c_2)$.
		
For part $(ii)$ let us first consider an arbitrary $X\in S_t(c_1,c_2)\cap\fq$. By part
$(i)$ we know that $X$ is a root of $f_{t,c_1,c_2}$. To show that it
is a multiple root, we check that $X$ is also root of the formal
derivative $f'_{t,c_1,c_2}$. 
As $X\notin\men{0,-1}$, the formal derivative $f'_{t,c_1,c_2}$ at $X$ can be expressed as $q^{t-3}+\dots +q+1$ times
\begin{equation*}
 \left( \frac{n_{t-2}(X+1)n_{t-2}(X)}{X} +\frac{n_{t-2}(X+1)n_{t-2}(X)}{X+1}+\frac{c_1n_{t-2}(X+1)}{X+1} - \frac{c_2n_{t-2}(X)}{X}\right).
\end{equation*}
Since $X\in S_t(c_1,c_2)$, we may replace $c_1$ and $c_2$ by
$N(X)=n_{t-2}(X) X^{q^{t-2}}$ and $N(X+1)=n_{t-2}(X+1)
(X+1)^{q^{t-2}}$, respectively. As $q^{t-3}+\dots +q+1=1$ in $\mathbb{F}_q$, this results in
\begin{equation*}
f'_{t,c_1,c_2}(X) = n_{t-2}(X) n_{t-2}(X+1) \kl{ \kw{X} + \kw{X+1} + \frac{X^{q^{t-2}}}{X+1} - \frac{(X+1)^{q^{t-2}}}{X} }.
\end{equation*}
However as $X\in\fq$, we have $X^q=X$, so the last factor simplifies to
\begin{equation*}
\kw{X} + \kw{X+1} + \frac{X}{X+1} - \frac{X+1}{X} = 0,
\end{equation*}
proving that $f'_{t,c_1,c_2}(X)=0$. Consequently $X\in R^*_t(c_1,c_2)$, hence
\begin{equation}\label{lem3iiiold}
S_t(c_1,c_2) \cap \fq \subseteq R_t^*(c_1,c_2).
\end{equation}
Before proving that in (\ref{lem3iiiold}) we have actually equality, we show part $(iii)$.  

We start by bounding the union and
intersection of the sets $S_t(c_1,c_2)$ and $R_t(c_1,c_2)\cap\fqq{t-2}$.
%$=\anz{S_t(c_1,c_2) \cup \kl{R_t(c_1,c_2)\cap\fqq{t-2}}} + \anz{S_t(c_1,c_2) \cap R_t(c_1,c_2) \cap \fqq{t-2}}.$
%
By part $(i)$ we have 
\begin{equation*}
S_t(c_1,c_2) \cup \kl{R_t(c_1,c_2)\cap\fqq{t-2}} \subseteq S_t(c_1,c_2) \cup R_t(c_1,c_2)=R_t(c_1,c_2).
\end{equation*}
Since $S_t(c_1, c_2) \subseteq \fqt$ and $\fqt\cap\fqq{t-2}=\fq$, by
$(i)$ and $(\ref{lem3iiiold})$ we obtain
\begin{equation*}
S_t(c_1,c_2) \cap  R_t(c_1,c_2)\cap\fqq{t-2} = S_t(c_1,c_2) \cap \fqq{t-2}=S_t(c_1,c_2) \cap \fq\subseteq R^*_t(c_1,c_2).
\end{equation*}
These two observations together imply
\begin{equation*}
\anz{S_t(c_1,c_2)}+\anz{R_t(c_1,c_2)\cap\fqq{t-2}}\leq \anz{R_t(c_1,c_2)} + \anz{R^*_t(c_1,c_2)}.
\end{equation*}
Now note that as $\anz{R_t(c_1,c_2)}$ is the  number of different
linear factors of $f_{t,c_1,c_2}$ in $\overline{\mathbb{F}}_q$ and $\anz{R^*_t(c_1,c_2)}$ is the number of different linear factors that appear at least twice, their sum is necessarily bounded from above by the degree of $f_{t,c_1,c_2}$ i.e. by $2(q^{t-3}+\dots+q+1)$. This proves
\begin{equation}\label{lem3iiilower}
\anz{S_t(c_1,c_2)}+\anz{R_t(c_1,c_2)\cap\fqq{t-2}} \leq 2(q^{t-3}+q^{t-4}+\dots+1).
\end{equation}
To get the desired equality for every pair $(c_1, c_2) \in (\fq^*)^2$
we will use a Stepanovesque trick of considering their average and
using double counting to transfer the difficult task of bounding the 
number of solutions of a high degree equation into the easy task of bounding the
number of solutions of a linear equation.
In other words we will show that the desired equality holds for the average, i.e., 
\begin{equation}\label{eq:average}
\kw{(q-1)^2}\fsum{c_1\in\fqu}{}\fsum{c_2\in\fqu}{}\kl{\anz{S_t(c_1,c_2)}+\anz{R_t(c_1,c_2)\cap\fqq{t-2}}} = 2(q^{t-3}+q^{t-4}+\dots+1).
\end{equation}
Note that this indeed will be enough, as we have already obtained the
same upper bound for the individual terms, so equality for the average
is possible only if each individual term matches the upper bound. 
				
To prove \eqref{eq:average}, we split the sum and evaluate each part 
separately. For the first part we use double-counting to obtain
\begin{align*}
\fsum{c_1\in\fqu}{}\fsum{c_2\in\fqu}{}\anz{S_t(c_1,c_2)}
&= \fsum{X\in\fqt}{}\anz{\men{(c_1,c_2)\in\kl{\fqu}^2 \mid
  S_t(c_1,c_2) \ni X}}\\
&= \fsum{X\in\fqt\ohne{0,-1}}{} 1 = q^{t-1}-2.
\end{align*}
The next to last equality holds since the sets $S_t(c_1, c_2)$
partition $\fqt \setminus \{0, -1\}$. Indeed, each $X \in \fqt \setminus \{0, -1\}$ is contained in exactly one of
them, namely $S_t(N(X), N(X+1))$.
Similarly, 
\begin{equation*}
\fsum{c_1\in\fqu}{}\fsum{c_2\in\fqu}{}\anz{R_t(c_1,c_2)\cap\fqq{t-2}}= \fsum{X\in\fqq{t-2}}{}\fsum{c_1\in\fqu}{}\anz{\men{c_2\in\fqu \mid X \in R_t(c_1,c_2) }}.
\end{equation*}
Now for fixed $X\in \fqq{t-2}$ and $c_1\in \fqu$ the expression
$f_{t,c_1,c_2}(X)$ becomes a linear polynomial in $c_2$. It has no
root in $\fq^*$ if $X\in\men{0,-1}$ or $c_1=-n_{t-2}(X)$, otherwise there is a unique $c_2$ for which $f_{t,c_1,c_2}(X)=0$, namely $\displaystyle c_2=\frac{n_{t-2}(X+1)(n_{t-2}(X) + c_1)}{n_{t-2}(X)}$. Hence
\begin{equation*}
\fsum{c_1\in\fqu}{}\fsum{c_2\in\fqu}{}\anz{R_t(c_1,c_2)\cap\fqq{t-2}}=\fsum{X\in\fqq{t-2}\ohne{0,-1}}{}\fsum{c_1\in\fqu\ohne{-n_{t-2}(X)}}{}1= (q^{t-2}-2)(q-2).
\end{equation*}
Summing up both parts, we get
\begin{align*}
\fsum{c_1\in\fqu}{}\fsum{c_2\in\fqu}{}\kl{\anz{S_t(c_1,c_2)}+\anz{R_t(c_1,c_2)\cap\fqq{t-2}}}
  &  =\kl{(q^{t-1}-2)+(q^{t-2}-2)(q-2)} \\
& = 2(q-1)^2(q^{t-3}+\cdots+1),
\end{align*}
which proves \eqref{eq:average}.

Now we turn back to finish the proof of $(ii)$. The equality in $(iii)$ implies that in the proof of (\ref{lem3iiilower}) all displayed inequalities and containments must hold with equality, in particular, we have equality in $(\ref{lem3iiiold})$ as well. 
        
Finally, we prove $(iv)$. To express $|S_t(c_1,c_2)|$ we first count
the elements of $R_t(c_1,c_2)\cap\fqq{t-2}$  through classifying them by
their $(t-2)$-norm and then use part $(iii)$.
\begin{align*}
\anz{R_t(c_1,c_2)\cap\fqq{t-2}} 
&= \fsum{b\in\fq}{}\anz{\men{X\in R_t(c_1,c_2)\cap\fqq{t-2}  : n_{t-2}(X)=b}}\\
&= \fsum{b\in\fq}{}\anz{\men{X\in \fqq{t-2} : n_{t-2}(X)=b \text{ and } n_{t-2}(X+1)(b + c_1) =  b \cdot c_2}}.
\end{align*}
 Note that $0\notin R_t(c_1,c_2)\cap\fqq{t-2}$, since $c_1\neq
 0$. Hence for $b=0$ this set is empty. Moreover it is also empty for $b=-c_1$,
 since neither $c_1$, nor $c_2$ is $0$. Consequently,
\begin{align*}
\anz{R_t(c_1,c_2)\cap\fqq{t-2}} 
&= \fsum{b\in\fqu\ohne{-c_1}}{}\anz{\men{X\in \fqq{t-2} \mid  n_{t-2}(X)=b \text{ and }n_{t-2}(X+1) = \frac{b \cdot c_2}{b+c_1}}}\\
&= \fsum{b\in\fqu\ohne{-c_1}}{}\anz{S_{t-1}\kl{b,\frac{b \cdot c_2}{b+c_1}}}
\end{align*}
Now, the assertion of part $(iv)$ follows by part $(iii)$.
\end{proof}

We are now ready to complete the proof of part $(b)$ of
Theorem~\ref{thm:main}.
\begin{proof}[Proof of Theorem~\ref{thm:main}(b)]
We shall use Lemma~\ref{lemma:reduction}.
We start by examining the case $t=3$. By \Cref{lemma:TripleNeighRec}$(iii)$
\begin{align*}
S_3(c_1,c_2) 
&=2-|R_3(c_1,c_2)\cap\fq|= 2 - \anz{\men{ X\in\fq \mid f_{3,c_1,c_2}(X) = 0 }} \\
&= 2 - \anz{\men{ X\in\fq \mid (X+1)X + c_1 \cdot (X+1) - c_2  X = 0 }} \\
&= 2 - \anz{\men{ X\in\fq \mid X^2 + (1+c_1-c_2)X + c_1 = 0 }} \\
&= 2 - \kl{ 1 + \eta\kl{(1+c_1-c_2)^2-4c_1} } = 1 - \eta\kl{(1+c_1-c_2)^2-4c_1},
\end{align*}
where $\eta=\eta_{\mathbb{F}_q}$ is the  quadratic character of $\mathbb{F}_q$. In the case $t=4$ we apply \Cref{lemma:TripleNeighRec}$(iv)$ and use the case $t=3$ to obtain
\begin{align*}
S_4(c_1,c_2) 
&= 2(q+1)-\fsum{b\in\fqu\ohne{-c_1}}{}\anz{S_{3}\kl{b,\frac{bc_2}{b+c_1}}} \\
&= 2(q+1)-\fsum{b\in\fqu\ohne{-c_1}}{}\kl{1 - \eta\kl{\left(1+b-\frac{bc_2}{b+c_1}\right)^2-4b}} \\
%&= q+4+\fsum{b\in\fqu\ohne{-c_1}}{}\eta\kl{(1+b-\frac{bc_2}{b+c_1})^2-4b} \\
&= q+4+\fsum{b\in\fqu\ohne{-c_1}}{}\eta\kl{\frac{\big((b+c_1)(1+b)-bc_2\big)^2-4b(b+c_1)^2}{(b+c_1)^2}}.
\end{align*}
Put 
\begin{align*}
L(b)
&=\big((b+c_1)(1+b)-bc_2\big)^2-4b(b+c_1)^2\\
&=b^4+2(c_1-c_2-1)b^3+\big((1+c_1-c_2)^2-6c_1\big)b^2+2c_1(1-c_1-c_2)b+c_1^2,
\end{align*}
and observe that the denominator inside $\eta$ may be omitted as it is a non-zero square and $\eta$ is multiplicative. Accordingly, 
\begin{align*}
S_4(c_1,c_2) 
&= q+4+\fsum{b\in\fqu\ohne{-c_1}}{}\eta(L(b)) = q+4-\eta(L(0))-\eta(L(-c_1))+\fsum{b\in\fq}{}\eta(L(b)) \\
&= q+4-\eta(c_1^2)-\eta(c_1^2 c_2^2)+\fsum{b\in\fq}{}\eta(L(b)) = q+2+\fsum{b\in\fq}{}\eta(L(b)).
\end{align*}

Our goal is to use the Weil character sum estimate (see \Cref{thm:Weil} in the Appendix) for the quadratic
character $\eta$. As the order of $\eta$ is $2$, we can estimate the
above sum using the first part of \Cref{thm:Weil} unless
$L(b)=(b^2+\alpha_1b+\alpha_0)^2$ for some $\alpha_1,\alpha_0\in \fq$,
in which case the second part of the same theorem applies. After
expanding and carefully comparing coefficients  (see Claim \ref{coeffclaim} in the Appendix)
one obtains that the latter is possible if and only if
$(c_1,c_2)=(1,-1)$, and in this case we have $L(b) = (b^2+b+1)^2$. 
% Indeed, we obtain four equations for the coefficients: 
% \begin{align*} 
% 2\alpha_1 & = 2(c_1-c_2 - 1), \\
% \alpha_1^2 +2\alpha_0 & = (1+c_1-c_2)^2 - 6 c_1 \\
% 2\alpha_1\alpha_0 & = 2c_1 (1-c_1-c_2) \\
% \alpha_0^2 & = c_1^2
% \end{align*} 

Accordingly, if $(c_1,c_2)=(1,-1)$, then by the second part of \Cref{thm:Weil}
\begin{align*}
S_4(1,-1)
&= q+2 + \fsum{b\in\fq}{}\eta\big(1\cdot (b^2+b+1)^2\big)= q+2 + \Big(q-\anz{\men{b\in\fq \mid b^2+b+1=0}}\Big)
 \eta(1) \\
&= q+2 +\big (q - \kl{1+\eta(-3)}\big) \cdot 1 = 2q + 1 - \eta(-3).
\end{align*}
Otherwise, if $(c_1,c_2)\neq (1,-1)$, then by the first part of \Cref{thm:Weil} we get
\begin{equation*}
\anz{ S_4(c_1,c_2) - q}= \anz{2 + \fsum{b\in\fq}{}\eta(L(b))} \leq 2 + \anz{\fsum{b\in\fq}{} \eta(L(b))}\leq 2 + (4-1)\sqrt{q}=O(\sqrt{q}),
\end{equation*}
implying that $S_4(c_1,c_2)=q+O(\sqrt{q})$.

For $t=5$  we use \Cref{lemma:TripleNeighRec}$(iv)$ and the case
$t=4$. 
\begin{align*}
S_5(c_1,c_2)
&= 2(q^2+q+1) - \fsum{b\in\fqu\ohne{-c_1}}{}\anz{S_{4}\kl{b,\frac{bc_2}{b+c_1}}} \\
&= 2(q^2+q+1) - S_4\left(1,\frac{c_2}{1+c_1}\right) - \fsum{b\in\fqu\ohne{-c_1, 1}}{}\anz{S_{4}\kl{b,\frac{bc_2}{b+c_1}}} \\
&= 2(q^2+q+1) - O(q) - (q-3) \cdot \kl{ q + O(\sqrt{q}) } =  q^2 + O(q^{1.5})
\end{align*}
Note that in the above estimate it was crucial that we could use that
for most values of $b$, the value of $|S_4 (b,bc_2/(b+c_1))|$ is
asymptotically $q$. 

For $t\geq 6$ we can apply induction with base case $t=5$. The induction step is the same as above, only that now we do not need to distinguish between cases. Indeed suppose that the statement holds for all $5\leq t'<t$ and consider the general case. By  \Cref{lemma:TripleNeighRec}$(iv)$ and the induction hypothesis for $t'=t-1$ we obtain
\begin{align*}
S_t(c_1,c_2)
&= 2(q^{t-3}+\dots+1) - \fsum{b\in\fqu\ohne{-c_1}}{}\anz{S_{t-1}\kl{b,\frac{bc_2}{b+c_1}}}\\
&= 2(q^{t-3}+\dots+1) -  (q-2) \cdot \kl{ q^{t-4} + O(q^{t-4.5}) }=  q^{t-3} + O(q^{t-3.5}).
\end{align*}
This finishes the proof of \Cref{thm:main}(b).
\end{proof}

\subsection{Finding a $K_{4,6}$}

In this section we prove Theorem~\ref{thm:K46}.  That is, assuming
$p\not=2,3$ we will construct (many) quadruples of vertices in $\NG(q, 4)$ which have six
common neighbors. Finally we will see that most of these in fact
involve four plus six {\em different} vertices (no loops involved
among the 24 adjacencies), hence forming a subgraph isomorphic to $K_{4,6}$. 

We will lean heavily on what we have learned about the common neighborhood of triples in the previous
subsection (part $(b)$ of Theorem~\ref{thm:main}), both in terms of intuition and actual tools. 
For $t=4$ we have proved that a small fraction of the triples $T$ had twice as many
common neighbours as the rest and we characterized them as being those
for which $c_1(T)=1$ and $c_2(T)=-1$.
Heuristically one could think that it should be easier to find
quadruples with $6$ neighbours among those which contain such exceptional
triples and maybe even more of them. This is the direction we will be
going and identify those quadruples which contain two such special
triples and prove that indeed roughly half of them have six common neighbors.

We will start to work out the heuristics described above by
investigating this exceptional case and trying to understand better
the algebraic structure of $S_4(1,-1)$. First we observe that the polynomial
$f_{4,1,-1}$ from the proof of the previous subsection can be written
in a product form. 
\begin{align}\label{eq:product-form} 
f_{4,1,-1}(X) & =\kl{X+1}^{q+1}X^{q+1}+(X+1)^{q+1}+X^{q+1}\nonumber\\
&=X^{2q+2}+X^{2q+1}+X^{q+2}+3X^{q+1}+X^q+X+1=h(X,1)\cdot h(1,X),
\end{align}
where 
\begin{equation*} 
h(Y,Z)=Y^{q+1}+Y^qZ+Z^{q+1}.
\end{equation*} 
%denotes the polynomial in $\fqq 3[Y,Z]$. 
%
For general $c_1,c_2$ the polynomial $f_{4,c_1,c_2}$ can have roots in
$\fqq{3}$ which are not in $S_4(c_1, c_2)$. In the next lemma we show that
this does not happen when $c_1=1, c_2=-1$, i.e., we ``find'' all the roots in the algebraic closure $\overline{\mathbb{F}}_{q^3}$ of $\fqq3$.

\begin{lemma}\label{lemma:S4bijection_a}
For every prime $p\geq 2$ we have
\begin{align*}
S_4(1,-1) &= \men{X\in\overline{\mathbb{F}}_{q^3}\mid h(X,1)\cdot h(1,X)=0}.
\end{align*}
\end{lemma}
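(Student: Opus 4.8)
The plan is to prove the two inclusions separately, the substantive one by a short direct Frobenius computation. By the product decomposition \eqref{eq:product-form} we have $f_{4,1,-1}(X)=h(X,1)\cdot h(1,X)$, so the set on the right-hand side of the claimed identity is precisely $R_4(1,-1)$, the set of roots of $f_{4,1,-1}$ in $\overline{\mathbb{F}}_q=\overline{\mathbb{F}}_{q^3}$. Hence the inclusion $S_4(1,-1)\subseteq\{X:h(X,1)h(1,X)=0\}$ is nothing but \Cref{lemma:TripleNeighRec}$(i)$.

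For the reverse inclusion I would take an $X\in\overline{\mathbb{F}}_{q^3}$ with $h(X,1)=0$ or $h(1,X)=0$ and show $X\in S_4(1,-1)$, that is $X\in\mathbb{F}_{q^3}$, $N(X)=1$ and $N(X+1)=-1$. First one checks $X\notin\{0,-1\}$, which is immediate since $h$ takes the value $1$ at both points in every characteristic. The defining equation then lets one solve for $X^q$ as a fractional-linear function of $X$: from $h(1,X)=0$ one gets $X^q=-1-\tfrac1X=\tfrac{-(X+1)}{X}$, and from $h(X,1)=0$ one gets $X^q=\tfrac{-1}{X+1}$. Applying Frobenius twice more, $X^{q^2}$ and $X^{q^3}$ become again explicit M\"obius functions of $X$, and a one-line computation gives $X^{q^3}=X$, so $X\in\mathbb{F}_{q^3}$. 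Feeding the explicit expressions for $X$, $X^q$, $X^{q^2}$ into $N(X)=X\cdot X^q\cdot X^{q^2}$ and into $N(X+1)=(X+1)(X^q+1)(X^{q^2}+1)$, both products telescope, yielding $N(X)=1$ and $N(X+1)=-1$. (One may also note the identity $h(X,1)=X^{q+1}\,h(1,\tfrac1X)$, so the involution $X\mapsto\tfrac1X$ matches the two root sets and only one of the two cases needs to be done by hand.)

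I do not anticipate a genuine obstacle here: the computation is short, and the only point requiring attention is that no division by zero occurs, i.e.\ that $X\ne0$, $X\ne-1$ and $X^q+1\ne0$, each of which follows at once from the relevant defining equation. The conceptual content of the lemma is simply that the special choice $(c_1,c_2)=(1,-1)$ makes the fractional-linear map induced by Frobenius have order $3$ (the associated matrix $\left(\begin{smallmatrix}-1&-1\\ 1&0\end{smallmatrix}\right)$ cubes to the identity), which is exactly what forces \emph{all} roots of $f_{4,1,-1}$ into $\mathbb{F}_{q^3}$ and onto the correct norm values, in contrast to what happens for general $(c_1,c_2)$.

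As an alternative, a purely enumerative argument is available: since $S_4(1,-1)\subseteq R_4(1,-1)$ always holds, it would suffice to verify $\anz{R_4(1,-1)}=\anz{S_4(1,-1)}=2q+1-\eta_{\mathbb{F}_q}(-3)$, which one can extract from \Cref{lemma:TripleNeighRec}$(ii)$,$(iii)$ (the multiple roots of $f_{4,1,-1}$ are exactly the elements of $S_4(1,-1)\cap\mathbb{F}_q$, each of multiplicity two, and $\anz{R_4(1,-1)\cap\mathbb{F}_{q^2}}=1+\eta_{\mathbb{F}_q}(-3)$). I would nonetheless favour the direct computation, as it additionally records the explicit action of Frobenius on $S_4(1,-1)$, which will be convenient when constructing the copies of $K_{4,6}$.
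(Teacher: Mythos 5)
Your proposal is correct and follows essentially the same route as the paper's proof: one inclusion via \eqref{eq:product-form} and \Cref{lemma:TripleNeighRec}$(i)$, then the reverse inclusion by solving each of $h(X,1)=0$, $h(1,X)=0$ for $X^q$ as a M\"obius function of $X$ (the paper's $u(X)=-\tfrac1{X+1}$ and $v(X)=-\tfrac{X+1}{X}$), iterating Frobenius to get $X^{q^3}=X$, and computing the two norms by the same telescoping products. The small additions you offer (the explicit check $X\notin\{0,-1\}$, the matrix-of-order-$3$ heuristic, the involution $X\mapsto 1/X$ relating the two cases, and the alternative counting argument) are all sound and are compatible with, though not needed for, the paper's argument.
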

\begin{proof}
 $S_4(1,-1)\subseteq R_4(1,-1)=\men{X\in\overline{\mathbb{F}}_{q^3}\mid h(X,1)\cdot h(1,X)=0}$ by
 \eqref{eq:product-form} and part $(i)$ of \Cref{lemma:TripleNeighRec}.

Now let $X\in\overline{\mathbb{F}}_{q^3}$ be such that $h(X,1)\cdot h(1,X)=0$. Then either
\begin{equation*}
h(X,1)=0 \imp X^q = -\kw{X+1}=u(X)\quad \text{ or }\quad h(1,X)=0 \imp X^q = -\frac{X+1}{X}=v(X).
\end{equation*}
In the first case
\begin{equation*}
X^{q^2} = u(u(X)) = -\kw{-\kw{X+1}+1} = v(X)
\end{equation*}
and
\begin{equation*}
X^{q^3}=u(u(u(X)))=u(v(X))=-\kw{-\frac{X+1}{X}+1}=X,
\end{equation*}
while in the latter case
\begin{equation*}
X^{q^2} = v(v(X)) = -\frac{-\frac{X+1}{X}+1}{-\frac{X+1}{X}} = u(X)
\end{equation*}
and
\begin{equation*}
X^{q^3}=v(v(v(X)))=v(u(X))=-\frac{-\kw{X+1}+1}{-\kw{X+1}}=X.
\end{equation*}
In particular, in both cases we have $X\in \fqq3$ and $X^{q^2+q}=u(X)v(X)$. Accordingly
\begin{equation*}
N(X) = X^{q^2+q+1} = X\cdot u(X)\cdot v(X) = X\cdot \kl{-\kw{X+1}}\cdot\kl{-\frac{X+1}{X} } = 1.
\end{equation*}
Similarly, for the norm of $X+1$ we get
\begin{align*}
N(X+1)&= (X+1)(X^q+1)(X^{q^2}+1)= (X+1)(u(X)+1)(v(X)+1)\\
&= (X+1) \cdot \kl{ -\frac{X+1}X+1}\cdot\kl{-\kw{X+1}+1}= -1.
\end{align*}
This shows that $X\in S_4(1,-1)$ and hence $\men{X\in\overline{\mathbb{F}}_{q^3}\mid h(X,1)\cdot h(1,X)=0}\subseteq S_4(1,-1)$.
\end{proof}

Third roots of unity will play an important role in our further considerations.
Whenever $p\neq 3$, there exists a non-trivial third root of unity
$a\neq 1$ in $\fqc$, that is, a root of the polynomial $Y^2+Y+1$. 
Then $a^2=a^{-1}$ is the other non-trivial root of unity.
Since $a$ and $a^{-1}$ are roots of a quadratic polynomial over
$\fq$, they are both contained in $\fqq{2}$. Let $e_q$ be $-1$, $0$ or $1$ according to whether $q$ is $-1$, $0$ or $1$ modulo 3. Then the polynomial $Y^2+Y+1$ has exactly $1+e_q$ roots in
$\mathbb F_q$. By looking at the order of the multiplicative groups $\mathbb{F}_{q} ^*$
and of $\mathbb{F}_{q^{2}}^*$ we see that if  
$q\equiv -1\mod 3$ then $a, a^{-1}\in \fqq 2\setminus \fq$
and if $q\equiv 1\mod 3$ then $a, a^{-1}\in \fq$. Note that since $a^3=1$, we have
$a^q=a^{e_q}$. We will also make use of the fact that $1+a^{e_q} + a^{-e_q} =0$.

 Let us denote by $G$ and $G^3$ the multiplicative groups of
 $3(q-e_q)$-th and $(q-e_q)$-th roots of unity in the algebraic
 closure $\fqc$ of $\fq$, respectively.
In other words
\begin{equation*}
G = \men{ x\in \fqc \mid x^{3(q-e_q)} = 1}\text{ and }G^3 = \men{ x\in \fqc \mid x^{q-e_q} = 1}.
\end{equation*}

Now suppose that $p\neq 3$, and let us fix a non-trivial third root of
unity $a\in\fqc$ for the rest of this subsection. The linear
fractional transformation $C: \fqc\setminus\{a\} \rightarrow \fqc$, defined by
\begin{equation*}
C(z)=\frac{z - a^{-1}}{z - a},
\end{equation*}
will be instrumental in our arguments.

\begin{lemma}\label{lemma:S4bijection_b}
If $p\neq 3$ then the map $z \mapsto C(z)$ is a bijection from $S_4(1,-1)\setminus\fq$ to $G \setminus G^3$.
\end{lemma}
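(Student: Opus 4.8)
The plan is to use that $C$ is a linear fractional transformation conjugating the order‑three map $u(z)=-\frac{1}{z+1}$ — the map governing the Frobenius action on $S_4(1,-1)$ through \Cref{lemma:S4bijection_a} — into the dilation $w\mapsto aw$. The structural input is the identity $C(u(z))=a\,C(z)$, which is a one‑line composition of Möbius maps using $1+a+a^{-1}=0$; iterating it gives $C(v(z))=a^{-1}C(z)$ for $v=u\circ u$. Since $u$ and $v$ are defined over $\fq$ but $C$ need not be, the Frobenius twist of $C$, which replaces the coefficient $a$ by $a^q=a^{e_q}$, equals $C$ itself when $e_q=1$ and equals $1/C(\cdot)$ when $e_q=-1$; this is the only point where the distinction $q\equiv\pm1$ modulo $3$ enters.

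For well‑definedness, take $z\in S_4(1,-1)\setminus\fq$. By \Cref{lemma:S4bijection_a} we have $z\in\fqq3$ and, according to which factor of $h(X,1)h(1,X)$ vanishes at $z$, either $z^q=u(z)$ or $z^q=v(z)$; moreover $z\neq a$ (if $e_q=-1$ then $a\notin\fqq3$, if $e_q=1$ then $a\in\fq$), so $w:=C(z)$ is defined and nonzero. Applying Frobenius to $C(z)$ and combining the twist description above with $C(z^q)\in\{aw,a^{-1}w\}$, a short check over the two values of $e_q$ yields in every case that $w^{q-e_q}\in\{a,a^{-1}\}$, i.e.\ $w^{q-e_q}$ is a primitive cube root of unity. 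Hence $w^{3(q-e_q)}=1$ while $w^{q-e_q}\neq 1$, that is, $w\in G\setminus G^3$.

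Injectivity is immediate since $C$ is injective on $\fqc\cup\{\infty\}$ minus its pole $a$. For surjectivity I would reverse the computation using the explicit inverse $C^{-1}(w)=\frac{a(w-a)}{w-1}$: for $w\in G\setminus G^3$ the element $w^{q-e_q}$ is again a primitive cube root of unity, and the conjugation identities $u=C^{-1}m_aC$, $v=C^{-1}m_{a^{-1}}C$ together with the Frobenius twist of $C^{-1}$ force $z:=C^{-1}(w)$ to satisfy $z^q=u(z)$ or $z^q=v(z)$; running the argument in the proof of \Cref{lemma:S4bijection_a} backwards then gives $z\in\fqq3$, $N(z)=1$ and $N(z+1)=-1$, so $z\in S_4(1,-1)$, while $z\in\fq$ is impossible because it would force $z$ to be a fixed point of $u$ or $v$, i.e.\ $z\in\{a,a^{-1}\}$, whence $w=C(z)\in\{\infty,0\}$, contradicting $w\in G\setminus G^3$. (When $q$ is odd one can instead argue by a cardinality count: $|S_4(1,-1)|=2q+1-\eta_{\fq}(-3)$ by the proof of \Cref{thm:main}(b); since $h(X,1)$ and $h(1,X)$ are squarefree, \Cref{lemma:TripleNeighRec}(ii) shows the $\fq$‑points of $S_4(1,-1)$ are exactly the roots of $X^2+X+1$ in $\fq$, of which there are $1+e_q$; and $\eta_{\fq}(-3)=e_q$; so $|S_4(1,-1)\setminus\fq|=2q-2e_q=|G|-|G^3|$, and an injection between finite sets of equal size is onto.)

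The main obstacle is bookkeeping rather than any real computation: one must keep the two cases $e_q=\pm1$ straight against the two possibilities $z^q=u(z)$ versus $z^q=v(z)$, and correctly track how Frobenius twists the coefficients of $C$ and of $C^{-1}$ in each. The identity $C\circ u=m_a\circ C$ is precisely what reduces everything to a routine finite check.
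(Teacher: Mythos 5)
Your proof is correct, and it differs from the paper's in a worthwhile way. For well-definedness the two arguments are essentially equivalent: the paper computes $C(z)^q$ by substituting $z^q=u(z)$ or $z^q=v(z)$ and massaging to get $C(z)^q=C(z)^{e_q}\cdot a^{\pm e_q}$, whereas you package the same computation in the conjugation identity $C\circ u=m_a\circ C$ (which does hold: expanding $C(u(z))$ and using $1+a+a^{-1}=0$, $a^{-2}=a$ gives $aC(z)$) together with the Frobenius twist of the coefficients of $C$; both routes yield $C(z)^{q-e_q}\in\{a,a^{-1}\}$, hence $C(z)\in G\setminus G^3$. The genuine divergence is in the surjectivity step. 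The paper proves surjectivity by a cardinality count: it reads off $|S_4(1,-1)|=2q+1-e_q$ from the proof of Theorem~\ref{thm:main}(b) (which rests on the Weil estimate and so assumes $q$ odd), subtracts $|S_4(1,-1)\cap\fq|=1+e_q$, and matches this against $|G\setminus G^3|=2(q-e_q)$. Your primary route instead constructs the preimage directly: for $w\in G\setminus G^3$, $w^{q-e_q}$ is a primitive cube root of unity, and the twist of $C^{-1}$ forces $z=C^{-1}(w)$ to satisfy $z^q=u(z)$ or $z^q=v(z)$, after which the argument in Lemma~\ref{lemma:S4bijection_a} (run forward, not backward --- it already shows any such $z$ lies in $\mathbb F_{q^3}$ with $N(z)=1$, $N(z+1)=-1$) places $z$ in $S_4(1,-1)$; the fixed-point observation rules out $z\in\fq$. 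I checked the Möbius identities (e.g.\ $C^{-1}(aw)=u(z)$, $C^{-1}(a^{-1}w)=v(z)$, and the $e_q=-1$ twist $C^{-1}(w)^q=C^{-1}(1/w^q)$) and they work out. What your route buys is independence from the cardinality of $S_4(1,-1)$: it needs only $p\neq 3$, so it would also cover $q=2^k$, matching the hypothesis of the lemma as stated more closely than the paper's own proof does. What it costs is the extra bookkeeping over the two cases $e_q=\pm 1$ and the two factors $h(X,1)$, $h(1,X)$, which you correctly flag as the main burden. You also mention the paper's counting argument as a fallback, so nothing is missing.
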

\begin{proof}
Let $z\in S_4(1,-1)\setminus\fq\subseteq \fqq 3\setminus\fq$. As $a,a^{-1}\in\fqq{2}$ and $\fqq 2\cap\fqq 3=\fq$, we clearly have $z\neq a,a^{-1}$, hence $C(z)$ exists and is nonzero. We aim to show that $C(z)\in G \setminus G^3$, which happens exactly if $C(z)^{q-e_q}\neq 1$ but $C(z)^{3(q-e_q)}=1$.
\begin{equation*}
C(z)^{q} = \frac{(z-a^{-1})^q}{(z-a)^q}=\frac{z^q-a^{-q}}{z^q-a^q}= \frac{z^q-a^{-e_q}}{z^q-a^{e_q}}
\end{equation*}
As $z\in S_4(1,-1)$, by Lemma \ref{lemma:S4bijection_a} we either have $z^q=-\kw{z+1}$ or $z^q = - \frac{z+1}z$. In the first case
\begin{equation*}
C(z)^{q} = \frac{-\kw{z+1}-a^{-e_q}}{-\kw{z+1}-a^{e_q}}=
\frac{za^{-e_q}+1+a^{-e_q}}{za^{e_q}+1+a^{e_q}} = \frac{za^{-e_q}-a^{e_q}}{za^{e_q}-a^{-e_q}}= \frac{z-a^{-e_q}}{z-a^{e_q}} \cdot a^{e_q} = C(z)^{e_q} \cdot a^{e_q}.
\end{equation*}
where we used that $1+a^{e_q}+a^{-e_q}=0$ and $a^{-2e_q}=a^{e_q}$.

In the second case
%a similar calculation shows that $C(z)^{q}=C(z)^{e_q}a^{-e_q}$. 
a similar calculation shows that 
\begin{equation*}
C(z)^{q} = \frac{-\frac{z+1}{z}-a^{-e_q}}{-\frac{z+1}{z}-a^{e_q}}=
\frac{za^{-e_q}+z+1}{za^{e_q}+z+1} = \frac{za^{e_q} - 1}{za^{-e_q} - 1}= \frac{z-a^{-e_q}}{z-a^{e_q}} \cdot a^{-e_q} = C(z)^{e_q} \cdot a^{-e_q}.
\end{equation*}

As $p\neq 3$ we have $e_q\neq 0$, so in both cases $C(z)^{q-e_q}\neq 1$ and hence $C(z)\notin G^3$. On the other hand $C(z)^{3(q-e_q)}=\kl{a^{\pm e_q}}^3=\kl{a^3}^{\pm e_q}=1$, confirming that $C(z)\in G\setminus G^3$.
				
$C$ is injective because it is a nontrivial fractional linear map. %As a linear rational function, $C$ is injective. 
Then to verify that $C$ is indeed a bijection between $S_4(1,
-1)\setminus \fq$ and $G\setminus G^3$ it is enough to show that the
two sets are of the same size. As $G^3$ is fully contained in $G$ as a
subgroup, the set  $G\setminus G^3$ has $|G|-|G^3|=2(q-e_q)$ elements. 
By the prooof of \Cref{thm:main}$(b)$, the set $S_4(1,-1)$ has $2q+1-e_q$ elements
and by Lemma \ref{lemma:S4bijection_a} we have $S_4(1,-1)\cap
\fq=\men{X\in \fq\mid h(X,1)\cdot h(1,X)=0}$. When viewed as
polynomials over $\fq$, using the identity $Y^q=Y$, both $h(Y,1)$ and
$h(1,Y)$ simplify to the quadratic polynomial $Y^2+Y+1$. As noted
earlier, this polynomial has $1+e_q$ roots over $\fq$ hence
$\anz{S_4(1,-1) \setminus \fq} = 2q+1-e_q - \kl{1+e_q} = 2
\kl{q-e_q}$.  Consequently, 
%$S_4(1,-1) \setminus \fq$ has also size $2(q-e_q)$ and hence 
$C$ is indeed a bijection.
\end{proof}

\begin{lemma}\label{lemma:S4bijection_d}
Let $p\neq 3$. If $A\in S_4(1,-1)\setminus \fq$ then $aA, a^{-1}A \notin S_4(1,-1)$.
\end{lemma}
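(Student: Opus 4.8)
The plan is to use only the definition of $S_4(1,-1)$ as the set of $X\in\fqq{3}$ with $N(X)=1$ and $N(X+1)=-1$, together with the standing fact $\fqq{2}\cap\fqq{3}=\fq$ and the identities $a^3=1$, $1+a+a^2=0$ (hence $a^{-1}=a^2$ and $1+a^2=-a$) for the fixed primitive cube root of unity $a$. Since $p\neq 3$ we have $e_q\in\{1,-1\}$, and I would split along these cases, because they differ in where $a$ and $a^{-1}$ sit: in $\fqq{2}\setminus\fq$ when $e_q=-1$, and in $\fq$ when $e_q=1$.

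If $e_q=-1$, the statement is essentially a field-membership triviality. Since $N(A)=1\neq 0$ we have $A\in\fqq{3}\setminus\{0\}$, so if $aA$ (respectively $a^{-1}A$) were in $S_4(1,-1)\subseteq\fqq{3}$, then $a=(aA)\,A^{-1}$ (respectively $a^{-1}=(a^{-1}A)\,A^{-1}$) would lie in $\fqq{3}$; combined with $a,a^{-1}\in\fqq{2}$ this forces $a\in\fqq{2}\cap\fqq{3}=\fq$, contradicting $e_q=-1$.

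If $e_q=1$, then $a,a^{-1}\in\fq$, so $aA\in\fqq{3}$ and the question is genuine; I would assume $aA\in S_4(1,-1)$ and derive a contradiction through the elementary symmetric functions $e_1,e_2,e_3$ of the Galois conjugates $A,A^q,A^{q^2}$. From $A\in S_4(1,-1)$ one reads off $e_3=N(A)=1$ and, by expanding $N(A+1)=(A+1)(A^q+1)(A^{q^2}+1)=e_3+e_2+e_1+1$, the relation $e_1+e_2=-3$. Since $a\in\fq$ one has $(aA)^{q^i}=aA^{q^i}$, so $N(aA+1)=a^3e_3+a^2e_2+ae_1+1=2+ae_1+a^2e_2$, and the hypothesis $N(aA+1)=-1$ gives the second relation $ae_1+a^2e_2=-3$. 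Solving this $2\times 2$ linear system (using $a\neq 1$) yields $e_1=3a$, $e_2=3a^2$, so that $A,A^q,A^{q^2}$ are the roots of $T^3-3aT^2+3a^2T-a^3=(T-a)^3$; hence $A=a\in\fq$, contradicting $A\notin\fq$. Running the same computation with $a^{-1}$ in place of $a$ (it is also a primitive cube root of unity) rules out $a^{-1}A\in S_4(1,-1)$ as well.

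I do not expect a genuine obstacle: the case $e_q=-1$ is immediate, and the case $e_q=1$ reduces to solving a linear system whose solution conveniently forces the symmetric functions of $\{A,A^q,A^{q^2}\}$ to coincide with those of $(T-a)^3$. The only point needing care is keeping track of which field contains $a$. As a fallback for $e_q=1$, one could instead substitute the explicit conjugate values $A^q,A^{q^2}\in\{u(A),v(A)\}$ from the proof of \Cref{lemma:S4bijection_a} and check the four resulting equations, each of which again collapses to $A=a$; the symmetric-function route is cleaner and avoids that case distinction.
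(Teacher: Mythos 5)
Your proof is correct, and in the substantive case it takes a genuinely different and arguably cleaner route than the paper's.

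The case $e_q=-1$ (where $a\notin\fq$) is identical to the first step of the paper's argument: if $cA\in\fqq{3}$ for $c\in\{a,a^{-1}\}$, then $c\in\fqq{3}\cap\fqq{2}=\fq$, a contradiction. In the remaining case $e_q=1$, however, the paper continues by invoking the fractional linear map $C$ and the cyclic group $G$ of $3(q-e_q)$-th roots of unity: it applies Lemma~\ref{lemma:S4bijection_b} to place $C(A)$, $C(aA)$, $C(a^{-1}A)$ in $G$, notes that their product is $1$, and then either writes $A$ explicitly in terms of the $\fq$-elements $C(A)^3$, $C(aA)^3$, or uses the Galois action on the $C(z)$'s to show $A$ satisfies a quadratic over $\fq$; either way one lands on $A\in\fq$. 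Your approach avoids all of that machinery: expanding $N(X)=e_3$ and $N(X+1)=1+e_1+e_2+e_3$ in the elementary symmetric functions of the conjugates $A,A^q,A^{q^2}$, the two memberships $A\in S_4(1,-1)$ and $aA\in S_4(1,-1)$ give the linear system $e_1+e_2=-3$, $ae_1+a^2e_2=-3$ (and $e_3=1$ automatically), whose unique solution forces the characteristic polynomial of $A$ to be $(T-a)^3$, hence $A=a\in\fq$. The determinant of the system is $a(a-1)\neq 0$, and $-3\neq 0$ requires exactly $p\neq 3$, so the hypothesis is used in the right place. Your argument is self-contained (it needs only the definition of $S_4(1,-1)$ and $\fqq{2}\cap\fqq{3}=\fq$, not Lemmas~\ref{lemma:S4bijection_a} or~\ref{lemma:S4bijection_b}), and would let the paper decouple this lemma from the $C$-bijection machinery; what the paper's version buys is staying consistently inside the $C$/$G$ framework that it needs anyway for Lemmas~\ref{lemma:S4bijection_b} and~\ref{lemma:S4bijection_c}.
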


\begin{proof}
Assume to the contrary that $c A\in S_4(1,-1)\subseteq \fqq 3$ for
$c=a$ or $a^{-1}$. As we have $A\in\fqq{3},$ this
implies that $c\in\fqq{3}$. However $c \in\fqq{2}$, so $c$ also belongs to
$\fqq 3\cap \fqq 2=\fq$. As discussed earlier, a non-trivial third
root of unity is present in $\fq$ if and only if $1=e_q \equiv
q \pmod 3$ and hence $G^3=\fqu$. 

As $A\notin \fq$ we must then also have $cA \notin\fq$, so by Lemma
\ref{lemma:S4bijection_b} both $C(A)$ and $C(cA)$ belong to
$G\setminus G^3$. 
Substituting, we obtain
\begin{align*}
C(aA) &= \frac{aA-a^{-1}}{aA-a} = \frac{A-a^{-2}}{A-1}=\frac{A-a}{A-1}\quad \text{and}\\ C(a^{-1}A) &= \frac{a^{-1}A-a^{-1}}{a^{-1}A-a} = \frac{A-1}{A-a^2}=\frac{A-1}{A-a^{-1}},
\end{align*}
which in particular implies that $C(A)\cdot C(aA)\cdot
C(a^{-1}A)=1$. Since two of the three factors are in $G$, so must be
the third. 

Now we propose two ways to prove that this implies that $A\in \mathbb F_q$ and hence gives a contradiction. 

By the definition of $G$, $C(A)^3$, $C(aA)^3$
and $C(a^{-1}A)^3$ all have to be roots of the polynomial $Y^{q-1}-1=0$ and
hence belong to $\fq$. A straightforward but tedious computation 
%{\bf (please  include in Appendix.)} 
shows that $A$ can be expressed as
\begin{equation*}
A=\frac{aC(A)^3C(aA)^3+a^2C(aA)^3+1}{1-a^{2}C(A)^3C(aA)^3-aC(aA)^3}.
\end{equation*}
Since all the ingredients were shown to be in $\fq$, so has to be $A$.

Alternatively, one can observe that none of the elements $C(aA)^3$
and $C(a^{-1}A)^3$ can belong to $G^3=\mathbb F_q^*$, as otherwise this would already imply $A\in \mathbb F_q$. Accordingly by Lemma \ref{lemma:S4bijection_d} the elements $A$, $aA$, $a^{-1}A$ all belong to $S_4(1,-1)\setminus \mathbb F_q$  and hence by Lemma \ref{lemma:S4bijection_a} are roots of either $h(X,1)$ or $h(1,X)$. Let $z_1$ and $z_2$ be two of these three elements which are roots of the same of the two polynomials $h(X,1)$ and $h(1,X)$, and set $B=C(z_1)$, $B'=C(z_2)$. Note that $B$ and $B'$ are both of the form $C(dA)$, where $d$ is a third root of
unity.
%We see  that the elements $C(A)$, $C(aA)$, $C(a^{-1}A)$
%are roots of either $h(X,1)$, or $h(1,X)$.
%Let $B $ and $B'$ (these are both of the form $C(dA)$, where $d$ is a
%third root of unity) two different elements from the above three which are
%roots of the same one of the two polynomials.
Now the formulas for the Galois action on the elements $C(z)$ where $z\in S_4(1,-1)$ (given in the proof of Lemma 5) imply at once
that $(B/B')^q=B/B'$, hence this fraction is in $\mathbb F_q$.
Also $B/B'$ is not 1, as $C$ is injective.
But $B/B'$ is also a fraction $D/D'$, where $D,D'$ are
quadratic polynomials of $A$ with coefficients from $\mathbb F_q$ and with leading coefficient 1.
These imply that $A$ is a root of a nontrivial quadratic equation
over $\mathbb F_q$ and hence $A\in \mathbb{F}_{q^2}$. As $A\in \mathbb F_{q^3}$, this is only possible if $A\in \mathbb F_q$. 
\end{proof}

The next lemma will be the main tool establishing that certain types of equation
systems have six solutions.

\begin{lemma} \label{lemma:SpecialQuadruples} Let $p\neq 2,3$ and $A,B\in\fqq 3$ such that $N(A)=N(B)=1$ and $\frac{A}{B}\in S_4(1,-1)\setminus \fq$. Then the system
\begin{equation}\label{eq0}
N(Y)=1\quad \quad N(Y+A) = -1\quad \quad N(Y+B)=-1
\end{equation}
has $5+\eta_{\fqq 3}\kl{A^2+AB+B^2}$ solutions for $Y$ in $\fqq 3$.
\end{lemma}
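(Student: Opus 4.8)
The plan is to convert the three norm equations into a single statement about the set $S_4(1,-1)$ studied in the previous subsection, and then to read off $|N|$ from the explicit description of that set provided by Lemmas~\ref{lemma:S4bijection_a}--\ref{lemma:S4bijection_d}. First I would reduce: write $c:=\frac AB$. Since $N$ is multiplicative and $N(A)=N(B)=1$, for $X:=\frac YA$ we have $N(Y)=N(X)$, $N(Y+A)=N(X+1)$ and, because $\frac YB=cX$, also $N(Y+B)=N(cX+1)$. Thus a solution $Y$ of \eqref{eq0} corresponds bijectively (via $Y\mapsto\frac YA$, which is a bijection of $\fqq3$) to an $X$ with $N(X)=1$, $N(X+1)=-1$ and $N(cX+1)=-1$; the first two say $X\in S_4(1,-1)$, and then $N(cX)=1$, so the third says $cX\in S_4(1,-1)$. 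Hence the number of solutions equals $|N|$ where $N:=\{X\in S_4(1,-1):cX\in S_4(1,-1)\}$, and $c\in S_4(1,-1)\setminus\fq$ by hypothesis. Since $A^2+AB+B^2=B^2(c^2+c+1)$ with $B^2$ a nonzero square, $\eta_{\fqq3}(A^2+AB+B^2)=\eta_{\fqq3}(c^2+c+1)$, so it suffices to show $|N|=5+\eta_{\fqq3}(c^2+c+1)$.

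Next I would set up the bookkeeping. Recall from Lemma~\ref{lemma:S4bijection_a} and its proof that, with the inverse order-three fractional linear maps $u(X)=-\frac1{X+1}$ and $v(X)=-\frac{X+1}{X}$, the set $S_4(1,-1)$ is the root set of $h(X,1)h(1,X)$, lies entirely in $\fqq3$, and for $X\in\fqq3\setminus\{0,-1\}$ satisfies $h(X,1)=0\iff X^q=u(X)$ and $h(1,X)=0\iff X^q=v(X)$. Write $S_u,S_v$ for these two subsets of $S_4(1,-1)$; their intersection is exactly the set of nontrivial cube roots of unity lying in $\fq$ (nonempty only when $q\equiv1\bmod3$). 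By Lemma~\ref{lemma:S4bijection_d} applied to $c$ and, symmetrically, to $c^{-1}\in S_4(1,-1)\setminus\fq$, no nontrivial cube root of unity $X$ has $cX\in S_4(1,-1)$, nor can $cX$ be such a root; hence every $X\in N$ lies in exactly one of $S_u\setminus\fq$, $S_v\setminus\fq$, and likewise for $cX$. Finally, since \eqref{eq0} is symmetric in $A$ and $B$ — which replaces $c$ by $c^{-1}$ and interchanges the roles of $S_u$ and $S_v$ (as $c\in S_u\iff c^{-1}\in S_v$) — we may assume $c\in S_u$, i.e. $c^q=u(c)=-\frac1{c+1}$, so that $c^{q^2}=v(c)=-\frac{c+1}{c}$.

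Now split $N$ into four cases according to the types of $X$ and $cX$. In each case, substituting the defining Frobenius identity $X^q=u(X)$ or $X^q=v(X)$, together with $c^q=u(c)$, into the defining identity for $cX$ (using $(cX)^q=c^qX^q$) yields a polynomial equation in $X$: (i) $X\in S_u,\ cX\in S_u$ gives a linear equation with the single root $X_1=-\frac{c+2}{2c+1}$; (ii) $X\in S_u,\ cX\in S_v$ gives a quadratic of discriminant $(c^2+c+1)^2$ whose roots turn out to be $c^q$ and $c^{q^2}$; (iii) $X\in S_v,\ cX\in S_u$ gives $cX^2+2(c+1)X+1=0$, of discriminant $4(c^2+c+1)$, which has $1+\eta_{\fqq3}(c^2+c+1)$ roots in $\fqq3$; (iv) $X\in S_v,\ cX\in S_v$ gives a linear equation with the single root $X_2=-\frac{2c+1}{c(c+2)}$. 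All denominators are nonzero since $c\notin\fq$ and $p\neq2,3$, and $c^2+c+1\neq0$ for the same reason. One then verifies, by computing $X^q$ for each candidate directly, that it genuinely satisfies both Frobenius identities demanded by its case (hence really lies in $S_4(1,-1)$, with the prescribed types, by Lemma~\ref{lemma:S4bijection_a}); for case (iii) this uses that a square root $\delta$ of $c^2+c+1$ in $\fqq3$ (when it exists) satisfies $\delta^q=-\frac{\delta}{c+1}$, which follows from $(c^2+c+1)^q=\frac{c^2+c+1}{(c+1)^2}$ together with $\delta^{q^3}=\delta$. Finally one checks that the solutions found in the four cases are pairwise distinct — none lies in $\fq$ because $c$ has degree three over $\fq$ — so there is no overcounting. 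Adding up, $|N|=1+2+\bigl(1+\eta_{\fqq3}(c^2+c+1)\bigr)+1=5+\eta_{\fqq3}(c^2+c+1)$.

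The only real obstacle is the verification step in the last paragraph: the polynomial equation obtained in each case is merely a \emph{consequence} of the two Frobenius conditions, so one must confirm that each of its roots actually satisfies those conditions (equivalently, that $h(\cdot,1)$ or $h(1,\cdot)$ vanishes on it) rather than being a spurious solution. Case (iii) is the most delicate, as there one must track the Galois action on the square root $\delta=\sqrt{c^2+c+1}$; and the fact that the four solution sets are disjoint — which is what makes the four counts simply add to $5+\eta_{\fqq3}(c^2+c+1)$ — relies on the structural inputs from Lemmas~\ref{lemma:S4bijection_a} and \ref{lemma:S4bijection_d}.
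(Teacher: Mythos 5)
Your proof is correct and is at heart the same decomposition as the paper's: both recognize that $Y/A$ and $Y/B$ must each lie in $S_4(1,-1)$, split into four cases according to which factor of $h(\cdot,1)h(1,\cdot)$ vanishes on each, derive a linear or quadratic equation per case, count roots, and invoke Lemma~\ref{lemma:S4bijection_d} for the disjointness of the four solution sets. What you do differently is the normalization $X=Y/A$, $c=A/B$ together with using the $A\leftrightarrow B$ symmetry to fix $c\in S_u$, i.e.\ $c^q=-1/(c+1)$. This cleans up the algebra considerably: the paper works with $h(A,B)$, $h(B,A)$ and discriminants $D_1=h(B,A)^2-4B^qA\,h(A,B)$, $D_2=h(A,B)^2-4A^qB\,h(B,A)$, and still needs a final step (using $h(A,B)=0$ or $h(B,A)=0$) to see that one of $\eta(D_1),\eta(D_2)$ is $1$ and the other equals $\eta(A^2+AB+B^2)$; in your version the ``square'' discriminant is visibly $(c^2+c+1)^2$ with roots identified as $c^q,c^{q^2}$, the other is $4(c^2+c+1)$, and $5+\eta(c^2+c+1)$ falls out immediately. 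The verification step you flag but leave to be carried out --- that each root of the derived polynomial genuinely satisfies both Frobenius conditions of its case, not merely their consequence --- is precisely the content of the paper's Appendix Claims~\ref{linear_eqs} and~\ref{quadratic_eqs}; your key computation $\delta^q=-\delta/(c+1)$ for case (iii) is correct (the sign is forced by $\delta^{q^3}=\delta$ together with $N(c+1)=-1$), and I checked that the explicit solutions $X_1=-\frac{c+2}{2c+1}$, $\{c^q,c^{q^2}\}$, $\frac{-(c+1)\pm\delta}{c}$, $X_2=-\frac{2c+1}{c(c+2)}$ all satisfy the required identities, so the argument is sound.
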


\begin{proof}
% We split the system into the following two subsystems:
% %
% \begin{equation}\label{eqA0}
% N(Y) = 1 \quad\quad N(Y+A) = -1 
% \end{equation}
% %
% and 
% %
% \begin{equation}\label{eqB0}
% N(Y) = 1 \quad\quad N(Y+B) = -1.
% \end{equation}
% %
%Dividing both equations in (\ref{eqA0}) by $N(A)=1$ and both equations in (\ref{eqB0}) by $N(B)=1$ we get the following equivalent forms:
We divide the first two equations of \eqref{eq0} by $N(A)=1$ to obtain
\begin{equation}\label{eqA}
N\kl{\frac YA} =   1 \quad\quad N\kl{\frac YA + 1} = -1 
\end{equation} 
and the first and the third equations by $N(B)=1$ to get
\begin{equation}\label{eqB}
N\kl{\frac YB} = 1 \quad\quad N\kl{\frac YB + 1} = -1.
\end{equation}
Now clearly, $Y\in \fqq{3}$ is a solution of \eqref{eq0} if and only if it
is a common solution of the systems \eqref{eqA} and \eqref{eqB},
which, by definition just means that both $\frac{Y}{A}, \frac{Y}{B} \in S_4(1,-1)$.
By \Cref{lemma:S4bijection_a} an element $Y\in \fqq{3}$ is a solution 
of system (\ref{eqA}) exactly when it is a solution of
$h\left(\frac{Y}{A}, 1\right) =0$ or  $h\left(1, \frac{Y}{A}\right)
=0$ in $\fqq{3}$, which is in turn is equivalent to being solution of
\begin{equation}\label{eq1}
h(Y,A)=0\quad\text{ or }\quad h(A,Y)=0.
\end{equation}
Analogously, for an element $Y$ of $\fqq{3}$ being a solution of the system (\ref{eqB}) is
equivalent to being a solution of
\begin{equation}\label{eq2}
h(Y,B)=0\quad\text{ or }\quad h(B,Y)=0.
\end{equation}
Consequently, the task of solving (\ref{eq0}) reduces to solving the
four possible combinations of equations from (\ref{eq1}) and (\ref{eq2}):
\begin{alignat}4
h(Y,A)&=0&\quad&\text{ and }\quad h(Y,B)&=0, \label{f1}\\
h(Y,A)&=0&\quad&\text{ and }\quad h(B,Y)&=0, \label{f2} \\ 
h(A,Y)&=0&\quad&\text{ and }\quad h(Y,B)&=0, \label{f3}\\ 
h(A,Y)&=0&\quad&\text{ and }\quad h(B,Y)&=0. \label{f4}
\end{alignat}
Before solving these systems, we show that their solution sets are disjoint, i.e.,
the number of solutions to (\ref{eq0}) is the sum of the number of solutions to (\ref{f1}), (\ref{f2}), (\ref{f3}) and (\ref{f4}).
%Here we observe that for solutions of \eqref{eq0} in $\fqq{3}$ the
%\emph{or} is exclusive both in (\ref{eq1}) and (\ref{eq2}). 
Indeed, assume that
there was a solution $X\in \fqq{3}$ of \eqref{eq0}, satisfying both
equations in (\ref{eq1}). Then we have
\begin{equation*} 
0=\frac{(X+A)\cdot h(A,X)-X\cdot h(X,A)}{A^{q+2}}=\kl{\frac XA}^2 + \frac XA + 1, 
\end{equation*}
hence, $\frac XA\in \fqq 3$ is a non-trivial third root of unity. Then
by our assumption on $\frac{A}{B}$ and Lemma \ref{lemma:S4bijection_d}
we have that $\frac{X}{B}=\frac{X}{A}\frac{A}{B}\notin S_4(1,-1)$. 
This contradicts the fact that $X$, as a solution to \eqref{eq0}, is
also a solution to \eqref{eqB}. 
By symmetry---note that $\frac{B}{A}\in
S_4(1,-1)\setminus \fq$---we see that there is no solution of  (\ref{eq0}) which solves both
euqations of (\ref{eq2}). 
% Similarly, if both equations in (\ref{eq2}) were
% satisfied by some solution $X\in \fqq{3}$ to (\ref{eq0}), then $\frac
% XB\in \fqq 3$ would be a non-trivial third root of unity. By noting
% that $\frac{A}{B}\in S_4(1,-1)\setminus \fq$ implies $\frac{B}{A}\in
% S_4(1,-1)\setminus \fq$, an application of Lemma
% \ref{lemma:S4bijection_d} results in
% $\frac{X}{A}=\frac{X}{B}\frac{B}{A}\notin S_4(1,-1)$. This contradicts the fact that $X$ is a solution to (\ref{eqA}).
	
Now we turn to counting the solutions of (\ref{f1}), (\ref{f2}), (\ref{f3}) and (\ref{f4}).	
By expressing $Y^q$ everywhere and setting the respective expression  equal to each other in the systems above, we obtain the following equations.
\begin{alignat}1
0 &= \kl{A^{q+1}-B^{q+1}} \cdot Y + A^{q+1}B-AB^{q+1} \tag{\ref{f1}*}\label{f1*}\\
0 &= B^q\cdot Y^2 + \kl{-A^{q+1}+AB^q+B^{q+1}} \cdot Y + AB^{q+1}\tag{\ref{f2}*}\label{f2*}\\
0 &= A^q\cdot Y^2 + \kl{A^{q+1}+A^qB-B^{q+1}} \cdot Y + A^{q+1}B\tag{\ref{f3}*}\label{f3*}\\
0 &= \kl{A^{q}-B^{q}} \cdot Y + A^{q+1}-B^{q+1}\tag{\ref{f4}*}\label{f4*}
\end{alignat}
Clearly, the solutions to (\ref{f1}), (\ref{f2}), (\ref{f3}) and
(\ref{f4}) also solve the respective equations above. 
As $N(A)=N(B)=1$, the coefficient of $Y^2$ in (\ref{f2*}) and (\ref{f3*})	is clearly non-zero. If the coefficient of $Y$ in  (\ref{f1*}) or (\ref{f4*})  would be zero, then by raising them to the $(q^2-q+1)$th and $q^2$th power respectively, we would get $A^2=B^2$ or $A=B$. This would mean that we either have $A=B$ or $A=-B$, which are both impossible as $\frac{A}{B}\notin \fq$ by assumption. Hence (\ref{f1*}) and (\ref{f4*}) are linear and (\ref{f2*}) and (\ref{f3*}) are quadratic equations in $Y$. 

The linear equations have one solution each, namely
$X_1=\displaystyle\frac{-A^{q+1}B+AB^{q+1}}{A^{q+1}-B^{q+1}}$ solves
(\ref{f1*}) and $X_2=\displaystyle\frac{A^{q+1}-B^{q+1}}{-A^q+B^q}$
solves (\ref{f4*}). Bearing in mind that $N(A)=N(B)=1$, it is a
straightforward calculation to show that $X_1$ and $X_2$ are also
solutions to (\ref{f1}) and (\ref{f4}) respectively (for details see
Claim \ref{linear_eqs} in the Appendix).

By \Cref{lemma:rootsQuadratic}, the quadratic equations have $1+\eta_{\fqq 3}(D_1)$ and $1+\eta_{\fqq 3}(D_2)$ solutions in $\fqq 3$ respectively, where
\begin{align*}
D_{1}  &= \kl{B^qA+B^{q+1}-A^{q+1}}^2 - 4 \cdot B^q \cdot AB^{q+1}= h(B,A)^2 - 4 \cdot B^q \cdot A \cdot h(A,B) \text{ and}\\
D_{2} &= \kl{A^qB+A^{q+1}-B^{q+1}}^2 - 4 \cdot A^q \cdot BA^{q+1} = h(A,B)^2 - 4 \cdot A^q \cdot B \cdot h(B,A)
\end{align*}
are their respective discriminants. A somewhat longer but still
straightforward calculation, which uses $N(A)=N(B)=1$ and the fact
that the solutions are from the field $\mathbb F_{q^3}$, shows that all
$\fqq 3$-solutions of (\ref{f2*}) and (\ref{f3*})
are solutions of (\ref{f2}) and (\ref{f3}), respectively (for the details
see Claim \ref{quadratic_eqs} in the Appendix). 
	
Accordingly the number of solutions to the original system is
\begin{equation*}
\ubr{(\ref{f1*})}1 + \ubr{(\ref{f2*})}{1+\eta_{\fqq 3}(D_1)} + \ubr{(\ref{f3*})}{1+\eta_{\fqq 3}(D_2)} + \ubr{(\ref{f4*})}1 = 4 + \eta_{\fqq 3}(D_1) + \eta_{\fqq 3}(D_2).
\end{equation*}
As $\frac{A}{B}\in S_4(1,-1)$, by \Cref{lemma:S4bijection_a} we either have $h(\frac AB, 1)=0$ or $h(1,\frac AB)=0$. As $h$ is a homogeneous polynomial, this is equivalent to $h(A,B) = 0$ or $h(B,A)=0$. 

First assume that we have $h(A,B) = 0$. The discriminants $D_1$ and $D_2$ can now be simplified significantly.
\begin{equation*}
D_{1}  = h(B,A)^2\quad \text{ and }\quad D_{2} = - 4 \cdot A^q \cdot B \cdot h(B,A).
\end{equation*}
$D_{1}$ is clearly a square and so $\eta_{\fqq 3}(D_1)=1$. Using the
assumptions on $A$ and $B$ we get that not just $A$ and $B$ but $A^q$,
$B^q$ and $-(A+B)$ are also elements of norm $1$. For the latter, one
can transform $N\left(\frac{A}{B} +1 \right) =-1$.
Consequently from \Cref{lemma:NormProperties}(e) it follows that they
are squares in $\fqq{3}$, and therefore the value of the quadratic
character does not change with the inlcusion and removal of these factors.
\begin{align*}
\eta_{\fqq 3}(D_{2}) &= \eta_{\fqq 3}\big(- 4 \cdot A^q \cdot B \cdot h(B,A)\big)=\eta_{\fqq 3}(4) \cdot \eta_{\fqq 3}(A^q) \cdot \eta_{\fqq 3}(B) \cdot \eta_{\fqq 3}\big(-h(B,A)\big)\\
&= \frac{\eta_{\fqq 3}\big(-(A+B)\big)}{\eta_{\fqq 3}(B^q)} \cdot \eta_{\fqq 3}\big(-h(B,A)\big)= \eta_{\fqq 3}\kl{\frac{A+B}{B^q}\cdot h(B,A)}
\end{align*}
Now as $h(A,B)=0$ we have
\begin{align*}
\eta_{\fqq 3}(D_{2}) %&= \eta_{\fqq 3}\kl{-\frac A{B^q} \cdot h(A,B) + \frac{A+B}{B^q}\cdot h(B,A)}\\
&= \eta_{\fqq 3}\kl{\frac A{B^q} \cdot \big(h(B,A) - h(A,B)\big) + \kw{B^{q-1}}\cdot h(B,A)}= \eta_{\fqq 3}\kl{A^2+AB+B^2}.
\end{align*}
In the other case, when $h(B,A)=0$, because of symmetry we obtain
$\eta_{\fqq 3}(D_{2})=1$ and $\eta_{\fqq 3}(D_{1})=\eta(A^2+AB+B^2)$. 

Hence, in both cases the number of solutions to the original system is
\begin{equation*}
5 + \eta_{\fqq 3}\kl{A^2+AB+B^2},
\end{equation*}
as desired.
\end{proof}

Before our construction we still need one final lemma, which will enable us
to control the character value $\eta_{\fqq 3}\kl{A^2+AB+B^2}$ by
transferring the problem to the less esotheric realm of group $G$.
Through this step we will be able to ensure that this character value 
is occasionally $1$ and hence an appropriately chosen corresponding 
quadruple does have six common neighbors.

\begin{lemma}\label{lemma:S4bijection_c}
If $p\neq 2,3$, then for every $D\in S_4(1,-1)\setminus\fq$, we have 
\begin{equation*}
\eta_G(C(D)) = \eta_{\fqq 3}\kl{D^2+D+1}, 
\end{equation*}
that is, $C(D)$ is a square in $G$ if and only if $D^2+D+1$ is a square in $\fqq 3$.
\end{lemma}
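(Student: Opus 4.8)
The plan is to push everything down to $\fq$ via the norm and then split on whether $q\equiv 1$ or $q\equiv 2\pmod 3$ (that is, $e_q=1$ or $e_q=-1$), the latter being the real work. Fix $D\in S_4(1,-1)\setminus\fq$; then $N(D)=1$, $N(D+1)=-1$ (with $N=N_{\fqq 3/\fq}$), so $D\neq 0,-1$, and $D^2+D+1=(D-a)(D-a^{-1})\neq 0$ since $D$ avoids $a,a^{-1}\in\fqq 2$ (which also makes $C(D)$ well defined by \Cref{lemma:S4bijection_b}). The first step is the identity
\[
N_{\fqq 3/\fq}(D^2+D+1)=\frac{(D^2+D+1)^3}{(D^2+D)^2},
\]
which I would deduce from \Cref{lemma:S4bijection_a}: there $D^q$ and $D^{q^2}$ are $u(D)=-\frac1{D+1}$ and $v(D)=-\frac{D+1}{D}$ in some order, and $u(D)^2+u(D)+1=\frac{D^2+D+1}{(D+1)^2}$, $v(D)^2+v(D)+1=\frac{D^2+D+1}{D^2}$, so multiplying these by $D^2+D+1$ gives the formula. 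Putting $M:=N_{\fqq 3/\fq}(D^2+D+1)\in\fq^*$, this identity writes $M$ as $D^2+D+1$ times a square, so $\eta_{\fqq 3}(D^2+D+1)=\eta_{\fqq 3}(M)=\eta_\fq(M)$, the last step since $M\in\fq^*$ and $[\fqq 3:\fq]=3$ is odd. Hence it is enough to show $\eta_G(C(D))=\eta_\fq(M)$.

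If $e_q=1$ then $a,a^{-1}\in\fq$, so $C(D)\in\fqq 3^*$, and from $(D-a)(D-a^{-1})=D^2+D+1$ one gets $C(D)\cdot(D^2+D+1)=(D-a^{-1})^2$, hence $\eta_{\fqq 3}(C(D))=\eta_{\fqq 3}(D^2+D+1)$. Moreover $3\mid q-1$, so $3(q-1)\mid q^3-1$ and $G\subseteq\fqq 3^*$; since $\frac{q^3-1}{2}=\frac{3(q-1)}{2}\cdot\frac{q^2+q+1}{3}$ with $\frac{q^2+q+1}{3}$ an odd integer and $g^{3(q-1)/2}\in\{\pm1\}$ for $g\in G$, we have $\eta_{\fqq 3}(g)=g^{(q^3-1)/2}=g^{3(q-1)/2}=\eta_G(g)$, and applying this to $g=C(D)$ closes this case.

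Now suppose $e_q=-1$, so $C(D)\in\fqq 6$ and $|G|=3(q+1)$ is even, whence $\eta_G(C(D))=C(D)^{3(q+1)/2}\in\{\pm1\}$. The key is to express $C(D)^3$ as $\xi^2/M$ with $\xi\in\fqq 2^*$ of norm $M$ over $\fq$. Set $\xi:=\prod_{i=0}^{2}(D^{q^i}-a^{-1})$ and $Q:=\prod_{i=0}^{2}(D^{q^i}-a)$, both nonzero. Pairing factors gives $\xi Q=\prod_{i=0}^2((D^{q^i})^2+D^{q^i}+1)=N_{\fqq 3/\fq}(D^2+D+1)=M$. Applying $x\mapsto x^q$, using $a^q=a^{-1}$ and $D^{q^3}=D$, yields $\xi^q=Q$ and $Q^q=\xi$, so $\xi\in\fqq 2^*$ and $N_{\fqq 2/\fq}(\xi)=\xi\xi^q=\xi Q=M$. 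Finally $\xi/Q=\prod_{i=0}^2 C(D^{q^i})$, and since $a^{q^i}=a^{(-1)^i}$ one checks $C(D^{q^i})=C(D)^{q^i}$ for $i$ even and $C(D^{q^i})=C(D)^{-q^i}$ for $i$ odd, so $\xi/Q=C(D)^{1-q+q^2}$; as $C(D)\in G$ by \Cref{lemma:S4bijection_b} its order divides $3(q+1)$, and $1-q+q^2-3=(q-2)(q+1)\equiv 0\pmod{3(q+1)}$ because $3\mid q-2$, whence $C(D)^3=\xi/Q$, and combined with $\xi Q=M$ this gives $\xi^2=M\,C(D)^3$.

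The computation then closes:
\[
\eta_G(C(D))=C(D)^{3(q+1)/2}=(\xi^2M^{-1})^{(q+1)/2}=\xi^{q+1}M^{-(q+1)/2}=M\cdot M^{-(q+1)/2}=M^{-(q-1)/2}=\eta_\fq(M),
\]
the last equality being Euler's criterion $M^{(q-1)/2}=\eta_\fq(M)$ (note $\eta_\fq(M)^{-1}=\eta_\fq(M)$); with the reduction above this equals $\eta_{\fqq 3}(D^2+D+1)$, as claimed. The step I expect to be the main obstacle is pinning down the signs: the abstract group theory only shows $N_{\fqq 2/\fq}(\xi)=\pm M$ and $\xi^2=M^{\pm1}C(D)^3$, so one must use the explicit product forms of $\xi$ and $Q$ together with the precise Galois action from \Cref{lemma:S4bijection_a} and \Cref{lemma:S4bijection_b} to get the correct signs, after which everything falls into place.
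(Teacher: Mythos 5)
Your proof is correct, and it takes a genuinely different route from the paper's. The paper picks square roots $r,s\in\fqc$ of $D^2+D+1$ and $C(D)$, uses the criterion that each quantity is a square iff $r^{q^3-1}=1$ resp.\ $s^{3(q-e_q)}=1$, and then relates $r^{q^3-1}$ to $s^{3(q-e_q)}$ through the single identity $r^2=s^2(D-a)^2$ together with an exponent manipulation that treats $e_q=\pm1$ uniformly. You instead never leave $\fqq 6$: you compute $M=N_{\fqq 3/\fq}(D^2+D+1)=(D^2+D+1)^3/(D^2+D)^2$ from the Galois action of \Cref{lemma:S4bijection_a}, reduce $\eta_{\fqq 3}(D^2+D+1)$ to $\eta_{\fq}(M)$, and then handle $e_q=1$ by the inclusion $G\subseteq\fqq 3^*$ and $e_q=-1$ by building $\xi\in\fqq 2^*$ with $\xi\xi^q=M$ and $\xi/\xi^q=C(D)^3$, from which $\eta_G(C(D))$ falls out by a clean exponent calculation. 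The paper's argument is shorter and avoids casework; yours is more hands-on and concrete, and it also happens to dispel your own closing worry: the explicit products $\xi=\prod_i(D^{q^i}-a^{-1})$, $Q=\prod_i(D^{q^i}-a)$ give $\xi Q=M$ and $\xi^q=Q$ \emph{exactly} (not up to sign), and $\xi/Q=C(D)^{1-q+q^2}=C(D)^3$ follows from the order of $C(D)$ in $G$, so there is in fact no residual sign ambiguity to resolve -- the reduction $\eta_G(C(D))=\xi^{q+1}M^{-(q+1)/2}=M^{-(q-1)/2}=\eta_\fq(M)$ is forced. One small remark: in the $e_q=1$ branch you never actually need the detour through $M$; the identity $C(D)(D^2+D+1)=(D-a^{-1})^2$ together with $\eta_G=\eta_{\fqq 3}|_G$ closes that case directly, as you note.
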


\begin{proof}
Let $r\in \fqc$ and $s \in\fqc$ be a square root of
$D^2+D+1$ and $C(D)$, respectively. 
% Do we use this here? (note that they are both nonzero). 
Then $\eta_{\fqq{3}}(D^2+D+1)=1$ if and only if $r^{q^3-1}=1$ 
(i.e., $r\in \fqq{3}$). Similarly $\eta_G(C(D))=1$
if and only if  $s^{3(q-e_q)}=1$ (i.e., $s\in G$).
%by definition
%%
%\begin{alignat*}{5}
%\eta_{\fqq{3}}(A^2+A+1)=1 &\gdw r\in \fqq{3} &&\gdw r^{q^3-1}=1 \text{ \ \ and}\\
%\eta_G(C(A))=1 &\gdw s\in G &&\gdw s^{3(q-e_q)}=1.
%\end{alignat*}
%%

Recall that $a$ and $a^{-1}$ are the non-trivial third roots of unity
in $\fqq{2}$,
which exist since $p\neq 3$. They are the roots of the polynomial $X^2+X+1$, therefore
\begin{equation*}
r^2 = D^2+D+1 = (D-a^{-1}) \cdot (D-a)= C(D) \cdot (D-a)^2 = s^2\cdot (D-a)^2.
\end{equation*}
%
%Recall the notation $e_q= \eta_{\fq}(-3)$. 
Using that  $q^3-1$ is even (as $p\neq 2$,),  $D^{q^3}=D$,
$a^q=a^{e_q}$ and ${e_q}^3=e_q$, we have
\begin{equation*}
r^{q^3-1}= \kl{s \cdot (D-a)}^{q^3-1}=s^{q^3-1}\cdot\frac{D^{q^3}-a^{q^3}}{D-a}= s^{q^3-1} \cdot \frac{D-a^{e_q}}{D-a}.
\end{equation*}
The latter fractional expression is $1$ if $e_q=1$, and it is $C(D)=s^2$ if $e_q=-1$ (note that as $p\neq 3$, $e_q=0$ is not possible), so
\begin{equation*}
r^{q^3-1} = s^{q^3-e_q}= \kl{s^{q-e_q}} ^ {q^2+e_q q+1}.
\end{equation*}
Since $q\equiv e_q \pmod 3$, $q$ is odd and not divisible by $3$, $q^2+e_q q+1$ is divisible
by 3 and odd. Furthermore $3(q-e_q)$ is even, so 
\begin{align*}
r^{q^3-1}&=\kl{s^{3(q-e_q)}}^{\frac{q^2+e_q q+1}{3}}=\kl{(s^2)^{\frac{3(q-e_q)}{2}}}^{\frac{q^2+e_q q+1}{3}}\\
&=\kl{C(D)^{\frac{3(q-e_q)}{2}}}^{\frac{q^2+e_q q+1}{3}}=C(D)^{\frac{3(q-e_q)}{2}}=s^{3(q-e_q)},
\end{align*}
where we used that by Lemma~\ref{lemma:S4bijection_b} $C(D)\in G$ so $C(D)^{\frac{3(q-e_q}{2})}=\pm 1$, and that $\frac{q^2+e_q q+1}{3}$ is odd.  In particular this implies that we have $r^{q^3-1}=1$ if and only if $s^{3(q-e_q)}= 1$, as wanted.
\end{proof}

We are now ready to construct $K_{4,6}$ subgraphs in $\NG(q,4)$.

\begin{proof}[Proof of Theorem~\ref{thm:K46}]
Consider a quadruple of vertices of $\NG(q,4)$  of the form
\begin{equation*}
 U = \men{(A_i,a_i)}_{i\in [4]} = \men{ (1,1), \kl{\kw{A+1}, -N\kl{\kw{A+1}}}, \kl{\kw{B+1}, -N\kl{\kw{B+1}}}, (0,1) },
\end{equation*}
where $A,B\in \fqq{3} \setminus \{ -1\}$, $A\neq B$. Recall the definition of $B_i(U)$ and
$H(U)$ from Lemma \ref{lemma:DegreeHelper}. The lemma then implies that
\begin{equation*}
\deg(U) = \absw{ \anz{H(U)} - 1 &\text{if } a_1=a_2=a_3=a_4 \\ \anz{H(U)} &\text{otherwise}},
\end{equation*}
where $H(U)$ in this case is the set of those $X\in \fqq 3$ for which
\begin{equation*} 
N(X+1) = 1, \quad N(X+1+A) = -1,  \quad N(X+1+B) = -1.
\end{equation*}
%
% \begin{alignat*}3
% &N(X+\frac{1}{A_1-A_4})=\frac{a_1}{a_4}N(\frac{1}{A_1-A_4})&\quad &\gdw \quad N(X+1) = 1, \\
% &N(X+\frac{1}{A_2-A_4})=\frac{a_2}{a_4}N(\frac{1}{A_2-A_4})&\quad &\gdw \quad N(X+1+A) = -1\quad \text{ and} \\
% &N(X+\frac{1}{A_3-A_4})=\frac{a_3}{a_4}N(\frac{1}{A_3-A_4})&\quad &\gdw \quad N(X+1+B) = -1.
% \end{alignat*}
% %

Substituting $Y=X+1$ in the above system we arrive at the system from
Lemma \ref{lemma:SpecialQuadruples}. 

Our plan is to apply Lemma \ref{lemma:SpecialQuadruples}, so we will
select $A, B \in \fqq{3}\setminus \{ -1\}$ such that $N(A)=N(B)=1$ and $\frac{A}{B} \in
S_4(1,-1) \setminus \fq$. In order to have six solutions, we will make sure that $C\left(\frac{A}{B}\right)$ is a square and apply
Lemma~\ref{lemma:S4bijection_c} with $D=\frac{A}{B}$.
Finally, in order to have $\deg(U)=|H(U)|=6$ in Lemma~\ref{lemma:DegreeHelper}, we will select $B$ such
that not only $N(B)=1$, but also $N(B+1)\neq -1$, so $a_3\neq 1=a_4$.

For this latter condition we select $B=1$, so $N(B+1)= N(2)=8\neq -1$,
so we do have $\deg(U) = \anz{H(U)}$.

For the selection of $A$ we fix a  generator $g$ of the cyclic group $G$. Then clearly
$g^2\in G\setminus G^3$, otherwise the order of $g$ would be
$2(q-e_q) < 3(q-e) = |G|$, contradicting the fact that $g$ generates
$G$. 
Consequently Lemma~\ref{lemma:S4bijection_b} ensures that there exists
$A:=C^{-1}(g^2) \in S_4(1,-1)\setminus \mathbb{F}_q$.

We have then $N(A)=1$, since $A\in S_4(1,-1)$. 
Furthermore $\frac{A}{B}\in S_4(1,-1)$, since $N(\frac{A}{B})=N(A)=1$
and $N(\frac{A}{B}+1)=N(A+1)=-1$, since $A\in S_4(1,-1)$.
Finally $\frac{A}{B}=A\notin\fq$, so the chosen elements $A$ and $B$ meet
all the conditions of
\Cref{lemma:SpecialQuadruples} and we have $\deg(U) = \anz{H(U)} = 5+\eta_{\fqq 3}\kl{A^2+A+1}$.

Finally, by Lemma~\ref{lemma:S4bijection_c}
$\eta_{\fqq{3}}\kl{A^2+A+1} =\eta_G(C(A))=\eta_G(g^2)=1$, hence
$\deg(U)$ is indeed six. 

To complete the proof of Theorem~\ref{thm:K46}, we still need to make
sure that the $24$ incidences, the existence of which we have just
proved, gives rise to an actual copy of $K_{4,6}$ in $\NG(q,4)$. 
The problem could be that the neighbourhood $\mathcal{N}(U) = \{(W_j,
w_j) : j\in [6]\}$ intersects $U = \{(A_i, a_i) : i \in [4]\}$. 
To overcome this, we apply certain transformations to create many new
vertex sets from $U$, each with common degree six. For any 
$\alpha \in \fqq{3}$, $\beta \in \fqq{3}^*$, and $c\in \fqu$ we define
\begin{equation*}
U^{\alpha, \beta, c} : = \big\{ (\beta A_i+\alpha, N(\beta)ca_i) : i=1,2,3,4\big\}.
\end{equation*}
We see  that $|U^{\alpha,\beta,c}|=4$ because the transformation we
performed on the first coordinates is bijective. 
Then the neighborhood of $U^{\alpha, \beta, c}$ can be expressed by
\begin{equation*}
\mathcal{N}(U^{\alpha, \beta, c}) := \big\{ \beta W - \alpha, c^{-1}w) : (W, w) \in
\mathcal{N}(U) \big\},
\end{equation*}
as the incidencies easily follow from $N(A_i+W)=a_iw$ for every $i\in
[4]$ and $(W,w) \in \mathcal{N}(U)$. 
%\begin{equation*} 
%N\left(\left(\beta A_i +\alpha\right) + \left(\beta W -\alpha\right) \right)= N\left(\beta\right) \left(A_i +W\right) = \left(N\left(\beta\right) ca_i\right) \left(c^{-1}w\right). 
%\end{equation*}
Consequently $\deg\left(U^{\alpha, \beta, c}\right)=6$  for every
choice of the parameters $\alpha \in \fqq{3}$, $\beta \in \fqq{3}^*$,
and $c\in \fqu$. 

Now for every choice of $\beta$ and $c$, and
adjacency $(A_i,a_i)\sim(W_j, w_j)$, there is a
unique ``forbidden translation'' $\alpha$, namely 
$\alpha= \frac{\beta(W_j-A_i)}{2}$, for which the images of $A_i$ and $W_j$ are equal and hence which might make 
the adjacency into a loop. 
In conlcusion there are at least $(q^3-1)(q-1)(q^3-24)$ sets
$U^{\alpha, \beta, c}$ of size four, which are disjoint from their
respective common neighborhoods that have size six. 
Each of these sets gives rise to a different copy of $K_{4,6}$,
because among the maps of the form $X\mapsto \beta X+\alpha$ only the
identity map stabilizes a set $\{1,\frac{1}{A+1},\frac{1}{2},0\}$ with
$A\not\in \mathbb F_q$. This proves our theorem.
\end{proof}

\section{Generic quadruples}\label{sec:quadruples}
In this section we prove part $(c)$ of Theorem \ref{thm:main} by giving a relatively elementary argument using
resultants. 

\begin{proof}[Proof of Theorem~\ref{thm:main}$(c)$]
Let $T=\{(A_i,a_i): i\in[4]\}\subseteq L$ be a generic vertex set of
size four in $\NG(q,t)$. As before, for $i\in[3]$ put $\displaystyle
B_i=\frac{1}{A_i-A_4}$ and $\displaystyle b_i=\frac{a_i}{a_4}\cdot
N(B_i)$. 
By Lemma \ref{lemma:DegreeHelper} we have that $\deg(T)\leq |H(T)|$, where $H(T)$ is the set of solutions to the system
\begin{equation}\label{original}
N(Y+B_i)=b_i,\  i=1,2,3.
\end{equation}
Consider the equation system
\begin{equation}\label{eq:newsystem}
f_i(Y_1,\dots,Y_{t-1})=\prod_{j=1}^{t-1}(Y_j-C_{i,j})-b_i=0,\ i=1,2,3,
\end{equation}
where $C_{i,j}=-B_i^{q^{j-1}}$, $i=1,2,3$, $j=1,\dots,t-1$.

For every solution $Y\in \fqt$ of \eqref{original} the vector 
$(Y, Y^q, \ldots , Y^{q^{t-2}}) \in \fqt^{t-1}$ is a solution of
  \eqref{eq:newsystem}. These are all distinct, hence it will be
  enough to show that \eqref{eq:newsystem} has at most 
$6(q^{t-4} + \cdots + q + 1)$ solutions. 

For polynomials $p(z)=p_nz^n+\cdots p_1z+p_0$ and $r(z)=r_mz^m+\cdots r_1z+r_0$ of degree $n$ and $m$ respectively, in the variable $z$ over some field $\mathbb{F}$, their \emph{Sylvester matrix} is the $(n+m)\times(n+m)$ matrix $\text{Syl}(p,r)=\{s_{i,j}\}_{i,j\in[n+m]}$  with entries
\begin{equation*}
s_{i,j}=\absw{p_{n+i-j}&\text{if } 1\leq i \leq m \\ r_{i-j} & m+1
  \leq i \leq m+n\\
0 & \mbox{ otherwise.}}
\end{equation*}
For an example consider Figure \ref{fig:matrix}.

\begin{figure}[ht] 
$$\left(\begin{array}{ccccccc}
p_4 & p_3 & p_2 & p_1 & p_0 & 0 & 0\\
0 & p_4 & p_3 & p_2 & p_1 & p_0 & 0\\
0 & 0 & p_4 & p_3 & p_2 & p_1 & p_0\\
r_3 & r_2 & r_1 & r_0 & 0 & 0 & 0\\
0 & r_3 & r_2 & r_1 & r_0 & 0 & 0\\
0 & 0 & r_3 & r_2 & r_1 & r_0 & 0\\
0 & 0 & 0 & r_3 & r_2 & r_1 & r_0
\end{array}\right)$$
\caption{The Sylvester matrix for $n=4$ and $m=3$ \label{fig:matrix}}
\end{figure}
An important property of the Sylvester matrix is that the degree of
the greatest common divisor of $p$ and $r$ is
$n+m-\mbox{rank}(\text{Syl}(p,r))$, in particular if $p$ and $r$ have
a common root, then the determinant of $\text{Syl}(p,r)$, also called
the \emph{resultant} of $p$ and $r$, is $0$. This holds true even if $p_n=0$ or $r_m=0$, that is, when $n$ and $m$
are only upper bounds on the degree of $p$ and $r$. 
(See e.g. \cite{K}.)
Now if $p$ and $r$ are multivariate polynomials in the variables $Y_1,\dots,Y_n$ over some field $F$, then we can write them as univariate polynomials in $Y_n$, and consider their Sylvester matrix (now with entries from $\mathbb{F}[Y_1,\dots,Y_{n-1}]$). We will call the determinant of this matrix the \emph{Sylvester resultant} of $p$ and $r$ with respect to $Y_n$, and denote it by $\mbox{Res}_{Y_n}(p,r)$. Note that $\mbox{Res}_{Y_n}(p,r)$ is a polynomial in the variables $Y_1,\dots,Y_{n-1}$. From the above property of the Sylvester matrix it follows that if $(C_1,\dots,C_n)$ is a common root of $p$ and $r$, then $(C_1,\dots,C_{n-1})$ is a root of $\mbox{Res}_{Y_n}(p,r)$.

Let us now return to the polynomials $f_1,f_2,f_3 \in \fqt[Y_1, \ldots , Y_{t-1}]$. Our plan is to compute
$g_i=\mbox{res}_{Y_{t-1}}(f_i,f_3)$ for $i=1,2$ and then
$g=\mbox{Res}_{Y_{t-2}}(g_1,g_2)$. Then by the above, if $(C_1, \ldots
, C_{t-1})\in \fqt^{t-1}$ is a common root of $f_1, f_2,$ and $f_3$, then $(C_1,
\ldots , C_{t-3})\in \fqt^{t-3}$ is a root of $g$.

For the computation we introduce
\begin{equation*}
h_i=h_i(Y_1,\dots,Y_{t-3})=\prod_{j=1}^{t-3}(Y_j-C_{i,j})
\end{equation*}
for $i=1,2,3$, and rewrite $f_i$ as
univariate linear polynomials in $Y_{t-1}$:
\begin{equation*}
f_i=\left(h_i\cdot \left(Y_{t-2}-C_{i,t-2}\right)\right)\cdot Y_{t-1}-\left(h_i\cdot C_{i,t-1}(Y_{t-2}-C_{i,t-2})+b_i\right).
\end{equation*}
Then for $i=1,2$ we have
\begin{align*}
g_i=
&\mbox{Res}_{Y_{t-1}}(f_i,f_3)=
\left|\begin{array}{cc}
h_i\cdot (Y_{t-2}-C_{i,t-2}) & -\left\{h_i\cdot C_{i,t-1}(Y_{t-2}-C_{i,t-2})+b_i\right\} \\
h_3\cdot (Y_{t-2}-C_{3,t-2}) & -\left\{h_3\cdot C_{3,t-1}(Y_{t-2}-C_{3,t-2})+b_3\right\}
\end{array}\right|\\
=&\left|\begin{array}{cc}
h_i\cdot (Y_{t-2}-C_{i,t-2}) & -h_i\cdot C_{i,t-1}(Y_{t-2}-C_{i,t-2}) \\
h_3\cdot (Y_{t-2}-C_{3,t-2}) & -h_3\cdot C_{3,t-1}(Y_{t-2}-C_{3,t-2})
\end{array}\right|+
\left|\begin{array}{cc}
h_i\cdot (Y_{t-2}-C_{i,t-2}) & -b_i \\
h_3\cdot (Y_{t-2}-C_{3,t-2}) & -b_3
\end{array}\right|\\
=&h_i\cdot h_3\cdot(Y_{t-2}-C_{i,t-2})(Y_{t-2}-C_{3,t-2}) 
\left|\begin{array}{cc}
1 & -C_{i,t-1} \\
1 & -C_{3,t-1}
\end{array}\right|
-h_i\cdot b_3(Y_{t-2}-C_{i,t-2})\\
&+h_3\cdot b_i(Y_{t-2}-C_{3,t-2}).
\end{align*}
That is, $g_i=c_{i,2}Y_{t-2}^2+c_{i,1}Y_{t-2}+c_{i,0}$ is a quadratic
polynomial in $Y_{t-2}$ with coefficients
\begin{align*}
c_{i,2}&=h_i\cdot h_3\cdot \left|\begin{array}{cc}
1 & -C_{i,t-1} \\
1 & -C_{3,t-1}
\end{array}\right|,\\
c_{i,1}&=-h_i\cdot h_3\cdot (C_{i,t-2}+C_{3,t-2})\left|\begin{array}{cc}
1 & -C_{i,t-1} \\
1 & -C_{3,t-1}
\end{array}\right|-h_i\cdot b_3+h_3\cdot b_i,\\
c_{i,0}&=h_i\cdot h_3\cdot C_{i,t-2}C_{3,t-2}\left|\begin{array}{cc}
1 & -C_{i,t-1} \\
1 & -C_{3,t-1}
\end{array}\right|+h_i\cdot b_3 C_{i,t-2}-h_3\cdot b_i C_{3,t-2}.
\end{align*}
Hence the resultant of $g_1$ and $g_2$ is a four-by-four determinant.
\begin{equation*}
g=\mbox{Res}_{Y_{t-2}}(g_1,g_2)=\left|\begin{array}{cccc}
c_{1,2} & c_{1,1} & c_{1,0} & 0 \\
0 & c_{1,2} & c_{1,1} & c_{1,0} \\
c_{2,2} & c_{2,1} & c_{2,0} & 0 \\
0 & c_{2,2} & c_{2,1} & c_{2,0}
\end{array}\right|.
\end{equation*}
Note that each $c_{i,j}$ is a quadratic polynomial in each of the
variables $Y_1,\dots,Y_{t-3}$.  In particular the degree of $g$ in any
of the variables is at most $8$. It turns out that this bound can be reduced. 

\begin{lemma}
For $1\leq a\leq t-3$ the coefficient of $Y_a^8$ in $g$ %when
                                %considered as a univariate polynomial
                                %in $Y_k$
 is $0$.
\end{lemma}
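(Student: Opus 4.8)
The plan is to pick out the coefficient of $Y_a^8$ in the $4\times 4$ determinant $g$, recognise it as a resultant (in an auxiliary variable) of two quadratic polynomials that share a common linear factor, and conclude that it vanishes.

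\emph{Step 1: the $Y_a$-dependence of the entries.} Since $1\le a\le t-3$, the variable $Y_a$ occurs linearly in each $h_i$: we may write $h_i=(Y_a-C_{i,a})H_i$ with $H_i:=\prod_{1\le j\le t-3,\ j\neq a}(Y_j-C_{i,j})$, a polynomial not involving $Y_a$. Inspecting the displayed formulas for $c_{i,2},c_{i,1},c_{i,0}$, the only summand of each that is quadratic in $Y_a$ is the one carrying the product $h_ih_3$; the summands with a single factor $h_i$ or $h_3$ (times the constants $b_3$ or $b_i$) are at most linear in $Y_a$. Hence each $c_{i,j}$ has $Y_a$-degree at most $2$, and the coefficient of $Y_a^2$ in $c_{i,j}$ is
\[
v_{i,j}\;=\;H_iH_3\,d_i\,e_{i,j},\qquad d_i:=C_{i,t-1}-C_{3,t-1},
\]
where $(e_{i,2},e_{i,1},e_{i,0})=(1,\,-(C_{i,t-2}+C_{3,t-2}),\,C_{i,t-2}C_{3,t-2})$ is precisely the coefficient vector of the quadratic $(Z-C_{i,t-2})(Z-C_{3,t-2})$.

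\emph{Step 2: extracting the top coefficient.} Writing each entry of the matrix defining $g$ as $Y_a^2\cdot(\text{leading part})+(\text{lower order in }Y_a)$ and expanding the determinant, one sees that the coefficient of $Y_a^8$ in $g$ is the determinant of the matrix of leading parts, namely
\[
\detm{v_{1,2} & v_{1,1} & v_{1,0} & 0\\ 0 & v_{1,2} & v_{1,1} & v_{1,0}\\ v_{2,2} & v_{2,1} & v_{2,0} & 0\\ 0 & v_{2,2} & v_{2,1} & v_{2,0}}.
\]
But this is exactly the Sylvester resultant (with respect to a formal variable $Z$) of the two quadratics $G_i(Z):=v_{i,2}Z^2+v_{i,1}Z+v_{i,0}$, $i=1,2$. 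By Step 1 we have $G_i(Z)=H_iH_3\,d_i\,(Z-C_{i,t-2})(Z-C_{3,t-2})$, so $G_1$ and $G_2$ share the non-constant factor $Z-C_{3,t-2}$; equivalently $Z=C_{3,t-2}$ is a common root (over the fraction field of $\fqt[Y_1,\dots,\widehat{Y_a},\dots,Y_{t-3}]$). By the property of the Sylvester matrix recalled above, a pair of polynomials with a common root has vanishing resultant, so the coefficient of $Y_a^8$ in $g$ is $0$.

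There is no real obstacle here: the only point requiring care is Step 1, namely verifying that the $b_i$-terms are of strictly smaller $Y_a$-degree and that the three leading coefficients $v_{i,2},v_{i,1},v_{i,0}$ are the common multiple $H_iH_3d_i$ of the coefficients of the \emph{single} quadratic $(Z-C_{i,t-2})(Z-C_{3,t-2})$. The common factor $Z-C_{3,t-2}$ of $G_1$ and $G_2$, which is what makes the top-degree determinant collapse, ultimately reflects that the leading $Y_a$-part of both $g_1$ and $g_2$ inherits the factor $(Y_{t-2}-C_{3,t-2})$ from having eliminated $Y_{t-1}$ against the same polynomial $f_3$.
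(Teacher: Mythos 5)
Your proof is correct and follows the same route as the paper: both arguments reduce the coefficient of $Y_a^8$ in $g$ to the determinant of the matrix of $Y_a^2$-coefficients, factor $H_iH_3d_i$ from the entries, and then recognise the remaining $4\times4$ determinant as the Sylvester resultant of the two quadratics $(Z-C_{1,t-2})(Z-C_{3,t-2})$ and $(Z-C_{2,t-2})(Z-C_{3,t-2})$, which share the root $C_{3,t-2}$ and hence have vanishing resultant. The only cosmetic difference is that you keep the scalar $d_i$ inside $G_i$ whereas the paper factors $d_i^2$ out of the determinant first; this does not affect the conclusion.
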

\begin{proof}
The coefficient in question is clearly the determinant we get by replacing $c_{i,j}$ everywhere in the determinant formula for $g$ with the coefficient of $Y_a^2$ in it. As
\begin{align*}
\mbox{coeff}(Y_a^2,c_{i,2})&=\frac{h_i}{Y_a-C_{i,a}}\cdot \frac{h_3}{Y_a-C_{3,a}}\cdot \left|\begin{array}{cc}
1 & -C_{i,t-1} \\
1 & -C_{3,t-1}
\end{array}\right|,\\ 
\mbox{coeff}(Y_a^2,c_{i,1})&=-\frac{h_i}{Y_a-C_{i,a}}\cdot \frac{h_3}{Y_a-C_{3,a}}\cdot (C_{i,t-2}+C_{3,t-2}) \left|\begin{array}{cc}
1 & -C_{i,t-1} \\
1 & -C_{3,t-1}
\end{array}\right|\\
\mbox{coeff}(Y_a^2,c_{i,0})&=\frac{h_i}{Y_a-C_{i,a}}\cdot \frac{h_3}{Y_a-C_{3,a}}\cdot C_{i,t-2}C_{3,t-2} \left|\begin{array}{cc}
1 & -C_{i,t-1} \\
1 & -C_{3,t-1}
\end{array}\right|,
\end{align*}
we have
\begin{equation*}
\mbox{coeff}(Y_a^8,g)=\left|\begin{array}{cccc}
\mbox{coeff}(Y_a^2,c_{1,2}) & \mbox{coeff}(Y_a^2,c_{1,1}) & \mbox{coeff}(Y_a^2,c_{1,0}) & 0 \\
0 & \mbox{coeff}(Y_a^2,c_{1,2}) & \mbox{coeff}(Y_a^2,c_{1,1}) & \mbox{coeff}(Y_a^2,c_{1,0}) \\
\mbox{coeff}(Y_a^2,c_{2,2}) & \mbox{coeff}(Y_a^2,c_{2,1}) & \mbox{coeff}(Y_a^2,c_{2,0}) & 0 \\
0 & \mbox{coeff}(Y_a^2,c_{2,2}) & \mbox{coeff}(Y_a^2,c_{2,1}) & \mbox{coeff}(Y_a^2,c_{2,0})
\end{array}\right|
\end{equation*}
\begin{equation*}
=\left(\frac{h_1}{Y_a-C_{1,a}}\right)^2\left(\frac{h_2}{Y_a-C_{2,a}}\right)^2 \left(\frac{h_3}{Y_a-C_{3,a}}\right)^4 \left|\begin{array}{cc}
1 & -C_{1,t-1} \\
1 & -C_{3,t-1}
\end{array}\right|^2 \left|\begin{array}{cc}
1 & -C_{2,t-1} \\
1 & -C_{3,t-1}
\end{array}\right|^2\cdot D,
\end{equation*}
where
\begin{equation*}
D=\left|\begin{array}{cccc}
1 & -(C_{1,t-2}+C_{3,t-2}) & C_{1,t-2}C_{3,t-2} & 0 \\
0 & 1 & -(C_{1,t-2}+C_{3,t-2}) & C_{1,t-2}C_{3,t-2} \\
1 & -(C_{2,t-2}+C_{3,t-2}) & C_{2,t-2}C_{3,t-2} & 0 \\
0 & 1 & -(C_{2,t-2}+C_{3,t-2}) & C_{2,t-2}C_{3,t-2}
\end{array}\right|.
\end{equation*}
Note that $D$ is just the Sylvester resultant of the two quadratic
univariate polynomials $(Y-C_{1,t-2})(Y-C_{3,t-2})$ and
$(Y-C_{2,t-2})(Y-C_{3,t-2})$. However these two have $C_{3,t-2}$ as a
common zero and hence their Sylvester resultant is $0$. This implies
that  the coefficient of $Y_a^8$ in $g$ is $0$.
\end{proof}

To reduce the effective degree of $g$ further, observe that $h_3$ can be factored
out from both $c_{2,2}$ and $c_{1,2}$, which are the non-zero entries of the
first column of the determinant defining $g$, hence $g = h_3\cdot g^*$
for some polynomial $g^*\in \fqt[Y_1, \ldots , Y_{t-3}]$.
Since $h_3$ is linear in each variable, the degree of $g^*$ in every
variable is at most six.

If $(C_1,\dots,C_{t-1})$ is a common zero of $f_1, f_2,$ and $f_3$,
then, as the $b_i$s are non-zero, we have $C_j\neq C_{i,j}$, for $i\in [3]$ and $j\in [t-1]$.  
In particular $h_3(C_1,\dots,C_{t-3})\neq 0$. On the other hand, by the
properties of the Sylvester resultant, we must have
$g(C_1,\dots,C_{t-3})=0$. This implies that $g^*(C_1,\dots,C_{t-3})=0$. 

Denote by $\tilde{g}$ the univariate polynomial that we obtain by
substituting $Y_i=Y^{q^{j-1}}$ in $g^*$  for $j\in [t-3]$. By the
degree bounds on $g^*$ we get that the degree of $\tilde{g}$ is at
most 
$6(1+q+q^2+\cdots +q^{t-4})$, in particular it has at most that many
roots. 
Now if $X$ is a solution to the original system (\ref{original}), then
$(X,X^q,\dots,X^{q^{t-2}})$ is a common root of the $f_i$s, hence
$(X,X^q,\dots,X^{q^{t-4}})$ is a root of $g^*$ and so $X$ is a root of
$\tilde{g}$. 
Consequently the number of solution to (\ref{original}) is also bounded by $6(1+q+q^2+\cdots +q^{t-4})$.
\end{proof}

\section{Applications}\label{sec:applications}

In this section, as an application of Theorem \ref{thm:main} we prove Theorem
\ref{thm:subgraph}.

\begin{proof}[Proof of Theorem \ref{thm:subgraph}] We start the proof by introducing some notation. Denote by $\Delta_d(q,t)$ and $\delta_d(q,t)$, respectively, the largest and smallest possible common degree of a generic $d$-tuple of vertices in the projective norm graph $\NG(q,t)$. For $d=0$, we set $\Delta_0(q,t) = \delta_0(q,t) = |V(\NG(q,t)|$.

Now let $H$ be a simple $\ell$-degenerate graph and suppose that $t\geq 3$. To simplify notation put $v=v(H)$ and $m=e(H)$. Further let $v_1,\dots,v_v$ be an ordering of the vertices of $H$ witnessing its $\ell$-degeneracy, i.e. every vertex $v_i$ has at most $\ell$ neighbours in $\{v_1,\dots,v_{i-1}\}$. For $1\leq i\leq v$ put $\mathcal{N}_i=\mathcal{N}(v_i)\cap \{v_1,\dots,v_{i-1}\}$ and $d_i=|\mathcal{N}_i|$, in particular $\mathcal{N}_1=\emptyset$ and $d_1=0$. With this notation for our ordering we have $d_i\leq \ell$ for $1\leq i\leq v$. 

To count the number of labelled copies of $H$ in $\NG(q,t)$ we will embed the vertices of $H$ into $\NG(q,t)$ one-by-one according the above order. Suppose we have already embedded $v_1,\dots,v_{i-1}$. To embed $v_i$, we have to choose a vertex from the common neighbourhood of the image
$T_i$ of $\mathcal{N}_i$ under this embedding. As $T_i$ is of size $d_i$, it has at most $\Delta_{d_i}(q,t)$ common neighbours in $\NG(q,t)$, so we have at most $\Delta_{d_i}(q,t)$ choices for $v_i$.  Accordingly 
\begin{equation*}
X_H(\NG(q,t))\leq \prod_{i=1}^v \Delta_{d_i}(q,t).
\end{equation*}
To obtain a similar lower bound we can repeat the same argument with the extra condition that during the embedding we want every possible set of already embedded vertices of size at most $\ell$ to be generic. We will achieve this simply by mapping the vertices of $H$ each time to a vertex of $\NG(q,t)$ with a first coordinate different from all the previous ones.

So suppose that we have already embedded $v_1,\dots,v_{i-1}$ with the desired property. To embed $v_i$, we have to choose a vertex from the
common neighbourhood of the image $T_i$ of $\mathcal{N}_i$ under this embedding whose first coordinate is different from those of $v_1,\dots,v_{i-1}$. As $T_i$ is now a generic set of size $d_i$, it has at least $\delta_{d_i}(q,t)$ common neighbours. To maintain our extra condition, when choosing the image of $v_i$ we have to exclude the common neighbours with first coordinate equal to the first coordinates of the previously selected ones. If $d_i=0$ then this means that we have to exclude $(i-1)(q-1)$ vertices, but there still will be at least $\delta_{0}(q,t)-(i-1)(q-1)\geq \delta_{0}(q,t)-vq$ candidates for the image of $v_i$. If $d_i>0$, then $T_i$ cannot contain two vertices with the same first coordinate, so for every previously selected vertex we have to exclude at most one vertex from $T_i$. Therefore there still will be at least $\delta_{d_i}(q,t)-(i-1)\geq \delta_{d_i}(q,t)-v$ candidates for the image of $v_i$. Accordingly we obtain that 
\begin{equation*}
X_H(\NG(q,t))\geq \prod_{i=1}^v \left(\delta_{d_i}(q,t)-v\chi_i\right),
\end{equation*}
where $\chi_i=q$ if $d_i=0$ and $\chi_i=1$ otherwise.

Now to finish the proof of Theorem \ref{thm:subgraph} we will consider two cases. 

First suppose $\ell=3$ and $t\geq 5$ or $\ell=2$ and $t\geq 3$. In both cases by Theorem~\ref{thm:main} we know that there exists a positive constant $C$ such that for all $d\leq \ell$ we have
\begin{equation}\label{deviation}
|\Delta_{d}(q,t)-q^{t-d} |,|\delta_{d}(q,t)-q^{t-d}|\leq Cq^{t-d-\frac{1}{2}}.
\end{equation}
Recall that by the construction of the order $d_i\leq \ell$ for $i\in [v]$, hence using (\ref{deviation}) we get
\begin{align*}
X_H(\NG(q,t))&\leq \prod_{i=1}^v \Delta_{d_i}(q,t) \leq \prod_{i=1}^v \left(q^{t-d_i}+Cq^{t-d_i-\frac{1}{2}}\right)=\left(\prod_{i=1}^v q^{t-d_i}\right) \left(1+\frac{C}{\sqrt{q}}\right)^v\\
&=q^{t\cdot v-(d_1+\dots d_v)}\left(1+\frac{C}{\sqrt{q}}\right)^v=q^{t\cdot v-m}\left(1+\frac{C}{\sqrt{q}}\right)^v 
%%%%%\leq \frac{1}{|\Aut(H)|}q^{t\cdot n-m} e^{C\frac{n}{\sqrt{q}}}
\leq q^{t\cdot v-m}\left(1+C'\frac{v}{\sqrt{q}}\right)
\end{align*}
for some appropriate positive constant $C'$, whenever $v=o(\sqrt{q})$. Similarly again using (\ref{deviation}) we get
\begin{align*}
X_H(\NG(q,t))&\geq \prod_{i=1}^v \left(\delta_{d_i}(q,t)-v\chi_i\right)
               \geq \prod_{i=1}^v \left(q^{t-d_i}-Cq^{t-d_i-\frac{1}{2}}-v\chi_i\right)\\&\geq \prod_{i=1}^v \left(q^{t-d_i}-C''q^{t-d_i-\frac{1}{2}}\right)
\end{align*}               
for some appropriate positive constant $C''\geq C$. Note that for all sets of parameters in the case $d_i=0$ we have $t-\frac{1}{2}\geq \frac{3}{2}$ and in the case $d_i>0$ we have $t-d_i-\frac{1}{2}\geq \frac{1}{2}$, hence whenever $v=o(\sqrt{q})$ then for given $C$ such a $C''$ really exists. Hence
\begin{align*}
X_H(\NG(q,t))&\geq \left(\prod_{i=1}^v q^{t-d_i}\right)\left(1-\frac{C''}{\sqrt{q}}\right)^v=q^{t\cdot v-m}\left(1-\frac{C''}{\sqrt{q}}\right)^v\geq q^{t\cdot v-m}\left(1-v\frac{C''}{\sqrt{q}}\right).
\end{align*}
The two bounds together give that $X_H(\NG(q,t))$ is asymptotically $q^{t\cdot v-m}$ as desired.

Finally suppose $\ell=3$ and $t=4$. In this case, according to Theorem~\ref{thm:main}, $\Delta_3(q,4)$ and $\delta_3(q,4)$ differ asymptotically by a factor of $2$, so the same proof only yields
\begin{equation*}
q^{t\cdot v-m}\big(1-o(1)\big)\leq X_H(\NG(q,4))\leq 2^{c(H)}q^{t\cdot v-m}\big(1+o(1)\big),
\end{equation*} 
where $c(H)$ is the minimum number of indicies with $d_i=3$ in any witnessing ordering of the vertices of $H$. Accordingly this shows that $X_H(\NG(q,4))=\Theta(q^{t\cdot v-m})$ for any $H$ with $v=o(\sqrt{q})$  and $c(H)$ bounded.

\end{proof}

\section{The automorphism group of projective norm graphs}\label{sec:auto}

In this section, we aim to prove Theorem \ref{thm:aut}. For the composition of some maps $\alpha$ and $\beta$ we fix the notation $\alpha\circ \beta$ and their order of action is understood as $(\alpha\circ \beta)(x)=\alpha(\beta(x))$.

\begin{proof}[Proof of Theorem \ref{thm:aut}]
We start the proof by showing that all the maps presented are really automorphisms of $\NG(q,t)$. For this first note that by \Cref{lemma:FiniteFieldBasics} the map $X\mapsto X^{p^i}$ is an automorphism of $\fqt$ for every $i\inn$, in particular it is bijective and is interchangeable with the field operations and the norm function. Accordingly for any $C\in\fqt^*$, $c\in\fqu$ and $i\inn$ we have
\begin{equation*}
(X,x) \sim (Y,y)\ \gdw\  N(X+Y) = xy\ \gdw\ N(X+Y)^{p^i} = (xy)^{p^i}
\end{equation*}
\begin{equation*}
\gdw\ N(X^{p^i}+Y^{p^i}) = x^{p^i} y^{p^i}\ \gdw N(C^2) N(X^{p^i}+Y^{p^i}) = \big(N(C)\big) ^2 x^{p^i} y^{p^i}
\end{equation*}
\begin{equation*}
\gdw\ N\big(C^2  X^{p^i}+C^2  Y^{p^i}\big) = \big(\pm N(C) x^{p^i}\big)\big(\pm N(C)  y^{p^i}\big)
\end{equation*}
\begin{equation*}
\gdw\ \big( C^2 X^{p^i},\pm N(C) x^{p^i} \big) \sim \big( C^2 Y^{p^i}, \pm N(C) y^{p^i}\big),
\end{equation*}
hence all maps presented for $q$ odd are really automorphisms of $\NG(q,t)$. For $q$ even $2A=0$ for every $A\in \fqt$ and hence for such an $A$, by continuing the previous series of equivalences, we additionally have
\begin{equation*}
( X,x ) \sim ( Y,y)\ \gdw\ \big( C^2 X^{p^i}+A,\pm N(C) x^{p^i} \big) \sim \big( C^2 Y^{p^i}+A, \pm N(C) y^{p^i}\big) 
\end{equation*}
which finishes this part for even $q$ as well. 

\medskip

Next we need to show that any automorphism in $\Aut(\NG(q,t))$ is of the given form. To do so we start by observing that any $\Phi\in \Aut(\NG(q,t))$ must act independently on the two coordinates.

\begin{lemma} \label{lemma:AutoDecomposition}
Let $q=p^{k}>2$ be a prime power, $t\geq 2$ an integer and $\Phi\in \Aut(\NG(q,t))$. Then there are permutations $\Psi:\fqt\to\fqt$ and $\psi:\fqu\to\fqu$ such that
\begin{equation*}
\Phi\big( (X,x)\big) = \big( \Psi(X), \psi(x) \big)
\end{equation*}
and $\Psi(-X)=-\Psi(X)$ for all $X\in \fqt$.
\end{lemma}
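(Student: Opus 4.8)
The plan is to show that two vertices of $\NG(q,t)$ share the same first coordinate (but are distinct) if and only if they satisfy a purely graph-theoretic relation that $\Phi$ must respect; this will force $\Phi$ to preserve the partition of $V$ into ``fibres'' $\{(A,x)\mid x\in\fqu\}$ over a fixed first coordinate $A\in\fqt$, and hence act on the first coordinate alone, inducing a well-defined permutation $\Psi$ of $\fqt$. The candidate relation is non-adjacency together with having no common neighbour: I claim that for distinct vertices $(A,a)$ and $(B,b)$ we have $A=B$ precisely when they are non-adjacent and $\mathcal{N}(\{(A,a),(B,b)\})=\emptyset$. One direction is immediate — if $A=B$ then $N(A+B)=N(2A)$ which is not of the form $ab$ unless $a=b$ (adjacency becomes a loop question), and by the observation already made in the text two vertices with equal first coordinate have no common neighbour. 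For the converse I would invoke Theorem \ref{thm:main}(a): a \emph{generic} pair (distinct first coordinates) always has common degree $\frac{q^{t-1}-1}{q-1}-\xi(T)\ge 1$ provided $q>2$, so a pair with empty common neighbourhood cannot be generic, forcing $A=B$. Care is needed for small $t$ and the loop vertices, but the bound $\frac{q^{t-1}-1}{q-1}-1\ge 1$ holds for all $q\ge 3$ and $t\ge 2$, so this works uniformly.

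**From the fibre-preservation to the coordinate decomposition.**

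Once $\Phi$ maps each fibre $F_A=\{A\}\times\fqu$ onto some fibre $F_{A'}$, define $\Psi(A)=A'$; this is a well-defined bijection of $\fqt$ because $\Phi$ is a bijection permuting the $q^{t-1}$ fibres. Then for each $A$, restricting $\Phi$ to $F_A\to F_{\Psi(A)}$ gives a bijection $\fqu\to\fqu$; a priori this could depend on $A$. To show it does not, I would use that adjacency $(A,a)\sim(B,b)$ depends on $a,b$ only through the product $ab$ and on $A,B$ through $N(A+B)$: fix two fibres $F_A, F_B$ with $A\ne B$, $A\ne -B$ (so there are genuine adjacencies between them), and observe that the bipartite adjacency pattern between $F_A$ and $F_B$ is determined by the map $(a,b)\mapsto [ab = N(A+B)^{-1}\cdot\text{(something)}]$ — more precisely $(A,a)\sim(B,b)\iff ab = N(A+B)$. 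Transporting through $\Phi$, writing $\Phi(A,a)=(\Psi(A),\psi_A(a))$, adjacency is preserved iff $ab=N(A+B)\iff \psi_A(a)\psi_B(b)=N(\Psi(A)+\Psi(B))$. Fixing this for varying $a,b$ pins down $\psi_A$ and $\psi_B$ up to a common multiplicative twist, and chaining over a connected system of fibres forces all the $\psi_A$ to coincide with a single permutation $\psi$ of $\fqu$ — this is the main technical step and where I expect the real work to be.

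**The antipodal relation and the main obstacle.**

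For the final assertion $\Psi(-X)=-\Psi(X)$, the key is to characterise the relation ``$B=-A$'' graph-theoretically as well. When $q$ is even this is vacuous since $-A=A$. When $q$ is odd, $(A,a)$ and $(-A,b)$ are \emph{never} adjacent (since $N(A+(-A))=N(0)=0\ne ab$) and — here is the point — for distinct first coordinates the pair $\{A,-A\}$ is exactly the one generic pair whose neighbourhood structure is extremal in a detectable way, or alternatively one can argue directly: $(X,x)$ is a common neighbour of $(A,a)$ and $(-A,b)$ iff $N(X+A)=ax$ and $N(X-A)=bx$; these can simultaneously hold, so unlike the $A=B$ case the common neighbourhood is nonempty, and one must instead distinguish $-A$ from a generic partner by a finer invariant — for instance, by how the two neighbourhoods interleave across fibres, using that $N(X+A)/N(X-A)$ takes every value in $\fqu$. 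I would isolate the property ``the map sending a common neighbour's second coordinate ... is a bijection onto $\fqu$'' (coming from part (a) with $\xi=1$ versus $\xi=0$, i.e. the $-1$ correction term) as the distinguishing feature; preserving this under $\Phi$ yields $\Psi(-A)=-\Psi(A)$. The main obstacle, as noted, is the rigidity argument that the fibrewise permutations $\psi_A$ are all equal; everything else is a matter of correctly packaging the elementary adjacency condition $ab=N(A+B)$ and quoting Theorem \ref{thm:main}(a).
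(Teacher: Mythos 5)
Your proposal has three genuine gaps. First, the ``immediate'' forward direction of your pairwise characterization is false: two distinct vertices $(A,a)$ and $(A,b)$ with the same first coordinate \emph{can} be adjacent, namely whenever $N(2A)=ab$, and in odd characteristic with $A\neq 0$ this admits many solutions with $a\neq b$ (e.g.\ $a=1$, $b=N(2A)$). So ``non-adjacency'' cannot be part of a correct characterization; you would have to use only ``empty common neighbourhood.'' Second, even that corrected relation fails at $t=2$: your claim that $\frac{q^{t-1}-1}{q-1}-1\geq 1$ for all $q\geq 3$, $t\geq 2$ is wrong, since at $t=2$ the quantity equals $0$, so aligned generic pairs \emph{also} have empty common neighbourhood and the relation no longer singles out $A=B$. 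Third, and most consequentially, you punt the claim that all the fibrewise second-coordinate maps $\psi_A$ coincide to an unproved ``chaining'' argument; it is not at all obvious that the condition $\bigl(ab=N(A+B)\Leftrightarrow \psi_A(a)\psi_B(b)=N(\Psi(A)+\Psi(B))\bigr)$ forces $\psi_A=\psi_B$ rather than merely determining one from the other via a twist $\psi_B(b)=N(\Psi(A)+\Psi(B))/\psi_A(N(A+B)/b)$, and this is precisely where the substantive content of the lemma lies.

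The paper proceeds quite differently, and in a way that avoids all three issues. It calls a vertex set $S$ \emph{poor} if every pair $u,v\in S$ satisfies $\deg(\{u,v\})<\frac{q^{t-1}-1}{q-1}$. The crucial observation is that the sets $S_x=\{(X,x):X\in\fqt\}$ of all vertices with a \emph{fixed second coordinate} are exactly the poor sets of size $q^{t-1}$: aligned generic pairs have common degree one below the maximum and non-generic pairs have degree zero, while any other set of that size must contain a generic non-aligned pair of full common degree. Since $\Phi$ permutes poor sets, it permutes the $S_x$, and this \emph{directly} produces $\psi$ on the second coordinate with no fibre-rigidity step needed. Afterwards, the small poor sets $\overline{S}_X=\{(X,x):x\in\fqu\}$ are used to produce $\Psi$: their images under $\Phi$ are poor and have all second coordinates distinct, which forces constant first coordinate. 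Finally $\Psi(-X)=-\Psi(X)$ follows from the fact that $\overline{S}_X$ and $\overline{S}_Y$ have no edge between them exactly when $Y=-X$, a set-level relation preserved by $\Phi$; there is no need for the finer degree invariant you propose. Note that this argument is uniform in $t\geq 2$ because it only uses the strict inequality $\deg < \frac{q^{t-1}-1}{q-1}$, never a positive lower bound on common degree.
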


\begin{proof}
To start the proof observe that certainly there are maps $\Gamma: \fqt\times\fqu\to\fqt$ and $\gamma: \fqt\times\fqu\to\fqu$ such that
\begin{equation*}
\Phi\big( (X,x)\big) = \big( \Gamma(X,x), \gamma(X,x) \big).
\end{equation*}
Now define a set $S$ of vertices \emph{poor} if for any pair of vertices $u,v\in S$ we have $\displaystyle\deg(\{u,v\})<\frac{q^{t-1}-1}{q-1}$. Further, for $x\in \fqu$ we put
\begin{equation*}
S_x=\{(X,x)\ |\ X\in \fqt\}\subseteq V(\NG(q,t)).
\end{equation*}
Note that for every $x\in \fqt$ the set $S_x$ has size $q^{t-1}$  and according to Theorem~\ref{thm:main}(b) it is poor. On the other hand we also claim, that any poor set of vertices of size $q^{t-1}$ is of the form $S_x$ for some $x\in \fqt$. Indeed, let $S$ be a poor set of vertices of size $q^{t-1}$ and suppose that it is not of the given form. In this case there exist $(Y,y),(Y',y')\in S$ with $y\neq y'$. As $S$ is poor we must have $\deg(\{(Y,y),(Y',y')\})<\displaystyle\frac{q^{t-1}-1}{q-1}$. A $y\neq y'$, this is possible only if $Y=Y'$. Now, as there are at most $q^{t-1}-1\geq 2$ vertices with first coordinate equal to $Y$, there must exist a vertex $(X,x)\in S$ with $X\neq Y$. However in this case either $y$ or $y'$ is different from $x$ and so again by Theorem~\ref{thm:main}(b) either $(X,x)$ and $(Y,y)$ or $(X,x)$ and $(Y,y')$ have common degree $\displaystyle \frac{q^{t-1}-1}{q-1}$, contradicting with the poorness of $S$. 

However $\Phi$ must map poor sets to poor sets, so there exists some function $\psi:\fqu\rightarrow \fqu$ for which we have $\Phi(S_x)=S_{\psi(x)}$ and hence $\gamma(X,x)=\psi(x)$ for every $x\in \fqu$ and $X\in \fqt$. As $\Phi$ is surjective, so must be $\psi$, hence it is a permutation of $\fqu$.

Analogously as before, for $X\in \fqt$ we put
\begin{equation*}
\overline{S}_X=\{(X,x)\ |\ x\in \fqu\}.
\end{equation*}
Cleary $\overline{S}_X$ is also poor for every $X\in\fqt$ (as every pair in it is non-generic), hence so must be
\begin{equation*}
\Phi(\overline{S}_X)=\{\big(\Gamma(X,x),\psi(x)\big)\ |\ x\in \fqu\}.
\end{equation*}
However here all the second coordinates are again different, so again this is possible only if all the first coordinates are the same. This means that there is some function $\Psi:\fqt\rightarrow \fqt$ such that $\Gamma(X,x)=\Psi(X)$ for every $X\in \fqt $ and $x\in \fqu$ which is clearly surjective, as $\Phi$ is, and hence is a permutation of $\fqt$.

Now what remains to show is that $\Psi(-X)=-\Psi(X)$ for all $X\in \fqt$. For this we define two sets of vertices $S$, $S'$ adjacent if there are vertices $v\in S$ and $w\in S'$ such that $(v,w)$ is an edge. Now for $X\in \fqt$ the set $\overline{S}_X$ is adjacent to $\overline{S}_Y$ exactly if $Y\in \fqt\setminus\{-X\}$, in particluar $\overline{S}_X$ and $\overline{S}_{-X}$ are non-adjacent. However then so must be $\Phi(\overline{S}_X)=\overline{S}_{\Psi(X)}$ and  $\Phi(\overline{S}_{-X})=\overline{S}_{\Psi(-X)}$, which is possible only if $\{\Psi(X),\Psi(-X)\}=\{Y,-Y\}$ for some $Y\in \fqt$ and hence $\Psi(-X)=-\Psi(X)$ holds, as desired.
\end{proof}

To continue we will consider two cases and suppose first that $t>2$. To obtain further properties of $\Psi$ and $\psi$ in this case we will need a result of H.W.~Lenstra from \cite{L}.  For a field extension $L\supseteq K$ a bijection $\beta:L\to L$ is called a \emph{$K$-semilinear $L$-automorphism}, if $\beta(l_1 + l_2) = \beta(l_1) + \beta(l_2)$ for every $l_1,l_2\in L$ (that is, $\beta$ is an automorphism of the additive group of $L$), and there is a field automorphism $\gamma\in\Aut(K)$, such that $\beta(l_K\cdot l_L) = \gamma(l_K) \cdot \beta(l_L)$ for every $l_K\in K, l_L\in L$. If $K=L$, the notion of semilinearity simplifies significantly.

\begin{lemma} \label{lemma:Semilinearity}
Let $L$ be a field. Then $\beta$ is an $L$-semilinear $L$-automorphism if and only if there is some $\alpha\in\Aut(L)$ and $C\in L^*$ such that $\beta(x) = C \cdot \alpha(x)$ for every $x\in L$.
\end{lemma}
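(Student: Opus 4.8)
The plan is to prove the equivalence by directly unwinding the definition of an $L$-semilinear $L$-automorphism in the special case $K=L$, where it reads: $\beta\colon L\to L$ is an additive bijection for which there exists $\gamma\in\Aut(L)$ with $\beta(uv)=\gamma(u)\,\beta(v)$ for all $u,v\in L$.

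For the direction ``$\Leftarrow$'' I would start from $\beta(x)=C\cdot\alpha(x)$ with $\alpha\in\Aut(L)$ and $C\in L^*$ and simply check the three defining properties: $\beta$ is a bijection as a composition of the bijection $\alpha$ with the bijection ``multiplication by the unit $C$''; $\beta$ is additive because $\alpha$ is; and for all $u,v\in L$ one has $\beta(uv)=C\alpha(u)\alpha(v)=\alpha(u)\,\bigl(C\alpha(v)\bigr)=\alpha(u)\,\beta(v)$, using multiplicativity of $\alpha$ and commutativity of $L$, so the defining identity holds with the witness $\gamma:=\alpha$.

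For the direction ``$\Rightarrow$'' the one idea is to evaluate the twisted-multiplicativity identity at $v=1$. Letting $\gamma\in\Aut(L)$ be the field automorphism witnessing semilinearity and putting $C:=\beta(1)$, this gives $\beta(x)=\beta(x\cdot 1)=\gamma(x)\,\beta(1)=C\,\gamma(x)$ for every $x\in L$ (moving $C$ to the front again uses that $L$ is commutative). Setting $\alpha:=\gamma$ then yields the claimed form $\beta(x)=C\,\alpha(x)$. The only point requiring a word of justification is that $C\in L^*$: additivity forces $\beta(0)=0$, and injectivity of $\beta$ then prevents $\beta(1)=0$, so $C=\beta(1)\ne 0$.

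In short, there is essentially no obstacle here: the statement is the specialization of the general semilinear notion to the case $K=L$, and the whole content is the substitution $v=1$ together with the harmless observation that $\beta(1)\neq 0$. No Galois theory or the quoted theorem of Lenstra is needed for this particular lemma; those enter only in the subsequent use of it.
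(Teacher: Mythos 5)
Your proof is correct and follows essentially the same route as the paper: both directions rest on the substitution $v=1$ (giving $\beta(x)=\gamma(x)\beta(1)$) for the forward implication and on the computation $\beta(uv)=C\alpha(u)\alpha(v)=\alpha(u)\beta(v)$ for the converse. You are somewhat more careful than the paper in explicitly verifying bijectivity in the converse direction and in noting that $C=\beta(1)\neq 0$ follows from additivity and injectivity, but these are minor elaborations rather than a different approach.
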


\begin{proof}
By definition, if $\beta$ is an $L$-semilinear $L$-automorphism, then there is some $\alpha \in \Aut(L)$ such that $\beta( x \cdot 1) = \alpha(x) \cdot \beta(1)$ for every $x\in L$, proving that $\beta$ is of the desired form with $C=\beta(1)$. Conversely, if $\beta$ is of the given form, then since $\alpha$ is an $L$-automorphism, it is also an automorphism of the additive group of $L$, and $\beta(l_K\cdot l_L) = C \cdot \alpha(l_K\cdot l_L) = \alpha(l_K) \cdot C \cdot \alpha (l_L) = \alpha(l_K) \cdot \beta(l_L).$
\end{proof}

\begin{theorem} [Lenstra ({\cite[Theorem 2]{L}})] \label{thm:Lenstra}
Let $F$ be a finite field, $E$ a non-trivial abelian group, $\Phi:
F^*\to E$ a surjective group homomorphism and $K\subset F$ the
subfield of $F$ generated by the kernel of $\Phi$. Then a permutation
$\rho:F\to F$ of $F$ satisfies that for some permutation $\kappa:E\to E$ 
\begin{equation*}
\Phi\big(\rho(x) - \rho(y)\big) = \kappa \big( \Phi(x-y) \big) \quad\forall x\neq y\in F
\end{equation*}
if and only if there is a $K$-semilinear $F$-automorphism $\beta$ and $b\in F$, such that
\begin{equation*}
\rho (x) = \beta(x) + b \quad\forall x\in F.
\end{equation*}
\end{theorem}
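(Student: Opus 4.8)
The plan is to prove the two implications separately, with the nontrivial work in the ``only if'' direction.

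\emph{The ``if'' direction.} Suppose $\rho(x)=\beta(x)+b$ where $\beta$ is a $K$-semilinear $F$-automorphism with companion field automorphism $\gamma\in\Aut(K)$. Additivity of $\beta$ gives $\rho(x)-\rho(y)=\beta(x-y)$, so it is enough to produce a permutation $\kappa$ of $E$ with $\Phi(\beta(z))=\kappa(\Phi(z))$ for every $z\in F^*$. Put $m=|\ker\Phi|$; since $F^*$ is cyclic, $\ker\Phi=\{x\in F^*:x^m=1\}$, this set lies in $K$ (it generates $K$), and it is invariant under every automorphism of $K$ because the relation $x^m=1$ is preserved by Frobenius. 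Hence if $\Phi(z_1)=\Phi(z_2)$, then $z_1z_2^{-1}\in\ker\Phi\subseteq K^*$, so $\beta(z_1)=\gamma(z_1z_2^{-1})\,\beta(z_2)$ with $\gamma(z_1z_2^{-1})\in\ker\Phi$, and therefore $\Phi(\beta(z_1))=\Phi(\beta(z_2))$. Thus $\kappa\colon\Phi(z)\mapsto\Phi(\beta(z))$ is a well-defined self-map of $E=\Phi(F^*)$; it is onto because $\beta$ is a bijection fixing $0$, hence a permutation, and the displayed identity holds.

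\emph{The ``only if'' direction.} First normalize: replacing $\rho$ by $x\mapsto\rho(x)-\rho(0)$ changes none of the differences $\rho(x)-\rho(y)$ and so preserves the hypothesis with the same $\kappa$; thus one may assume $\rho(0)=0$ and reinstate the translation $b:=\rho(0)$ at the end, after which it remains to show the normalized $\rho$ is a $K$-semilinear $F$-automorphism. Taking $y=0$ gives $\Phi(\rho(x))=\kappa(\Phi(x))$ for $x\ne0$, and more generally for each fixed $t\in F^*$ the discrete derivative $x\mapsto\rho(x+t)-\rho(x)$ takes all its values in the single $\Phi$-fiber $\Phi^{-1}(\kappa(\Phi(t)))$, a set of size $m$; the whole task is to rigidify such fiber-valued statements. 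The colouring $\{x,y\}\mapsto\Phi(x-y)$ already carries an automorphism group visible by inspection --- all translations and all multiplications by elements of $\ker\Phi$ preserve every colour, and jointly they act transitively on the ordered pairs of any one colour. Exploiting this, I would argue in two steps: (a) \emph{additivity} --- compare $\rho$ with these visible automorphisms (analysing, e.g., the colour-automorphism $\rho\tau_t\rho^{-1}$, together with group-theoretic constraints coming from $\Gamma L_1(F)$) to force the discrete derivative to be constant, so that $\rho$ becomes an $\mathbb{F}_p$-linear bijection of $F$; and (b) \emph{semilinearity} --- granted additivity, each map $x\mapsto\rho(cx)$ with $c\in K^*$ is again an additive colour-automorphism, and additivity of $\rho$ forces $\rho(cx)/\rho(x)$ to be independent of $x$, say $=\lambda(c)$, whereupon $\lambda\colon K^*\to F^*$ is simultaneously multiplicative and additive, hence a ring embedding $K\hookrightarrow F$, i.e.\ an automorphism $\gamma\in\Aut(K)$ with $\rho(cx)=\gamma(c)\rho(x)$. (When $\dim_K F\ge2$ there is an alternative route for (a)--(b): the additive cosets of the $K$-subspaces of $F$ are precisely the subsets all of whose internal difference-colours lie in one coset of $\Phi(K^*)\le E$, so $\rho$ permutes the $K$-affine lines, and the Fundamental Theorem of Affine Geometry applies.) Undoing the normalization yields $\rho=\beta+b$ with $\beta$ $K$-semilinear.

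\emph{Main obstacle.} The crux is step (a) in the genuinely difficult regime $m=|\ker\Phi|>1$, where the discrete derivative is a priori only fiber-valued rather than constant; here one cannot avoid a real structural argument, and the case $K=F$ --- with no ``geometry of lines'' available, so that $\rho$ must be pinned down directly as a scalar multiple of a power of Frobenius --- is the most delicate. Concretely one has to break the circularity among the assertions ``$\rho$ is additive'', ``$\kappa$ is compatible with the subgroup $\Phi(K^*)$'', and ``$\rho$ maps $K$-lines to $K$-lines'', which is exactly the technical heart of Lenstra's proof.
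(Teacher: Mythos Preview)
The paper does not contain a proof of this theorem at all: it is quoted verbatim as an external result from Lenstra~\cite{L} and used as a black box in the determination of $\Aut(\NG(q,t))$. So there is no ``paper's own proof'' to compare your attempt against.

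As for the proposal itself, the ``if'' direction is fine, and your reduction to $\rho(0)=0$ together with the observation that each discrete derivative $x\mapsto\rho(x+t)-\rho(x)$ lands in a single $\Phi$-fibre is the right starting point. However, what you have written is an outline rather than a proof: step~(a), establishing additivity of $\rho$, is precisely the substance of Lenstra's argument, and you explicitly defer it (``I would argue\ldots'', ``one cannot avoid a real structural argument'', ``exactly the technical heart of Lenstra's proof''). The suggestion to analyse $\rho\tau_t\rho^{-1}$ inside $\Gamma L_1(F)$ is in the right spirit, but you have not shown how to break the circularity you yourself identify between additivity, the structure of $\kappa$ on $\Phi(K^*)$, and the preservation of $K$-lines. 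The alternative route via the Fundamental Theorem of Affine Geometry genuinely works when $\dim_K F\ge 2$, but --- as you note --- the case $K=F$ (which is exactly the case the present paper needs, by Lemma~\ref{lemma:KernelIsField}) is not covered by that shortcut and requires the full Lenstra argument. In short: correct skeleton, but the load-bearing step is missing, and since the paper simply cites the result, there is nothing here to compare it to.
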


We will apply this theorem with $F=\fqt, E=\fqu$, $\Phi(x)=N(x)$ and accordingly $K$ being the subfield of $\fqt$ generated by $N^{-1}(1)$.

\begin{lemma} \label{lemma:KernelIsField}
Let $q=p^{k}$ be a prime power and $t>2$ an integer. Then there is no proper subfield of $\fqt$ containing $N^{-1}(1)$.
\end{lemma}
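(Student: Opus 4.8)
The plan is to combine the explicit description of $N^{-1}(1)$ as a subgroup of $\fqt^*$ with a one-line divisibility argument about the orders of subfields. First I would record that, viewing $N$ as the field norm of the extension $\fqt/\fq$, it is a surjective group homomorphism $\fqt^*\to\fqu$ (this is already contained in part (f) of \Cref{lemma:NormProperties}, which gives $|N^{-1}(b)|=\frac{q^{t-1}-1}{q-1}$ for every $b\in\fqu$), so $N^{-1}(1)$ is a subgroup of $\fqt^*$ of order exactly $\frac{q^{t-1}-1}{q-1}=1+q+\cdots+q^{t-2}$. The only property of this number that the argument needs is the crude bound $|N^{-1}(1)|>q^{t-2}$, which holds precisely because $t\geq 3$.

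Next, let $L\subseteq\fqt$ be any subfield containing $N^{-1}(1)$, and write $|L|=p^{d}$. Since $\fqt=\mathbb{F}_{p^{k(t-1)}}$ and the subfields of $\mathbb{F}_{p^{k(t-1)}}$ are exactly the fields $\mathbb{F}_{p^{d}}$ with $d\mid k(t-1)$, we have $d\mid k(t-1)$. On the other hand $N^{-1}(1)\subseteq L$ forces $p^{d}=|L|\geq|N^{-1}(1)|>q^{t-2}=p^{k(t-2)}$, hence $d>k(t-2)$. To finish I would invoke the elementary observation that for $t\geq 3$ the integer $n:=k(t-1)$ has no divisor strictly between $k(t-2)$ and $n$: any proper divisor of $n$ is at most $n/2=k(t-1)/2$, and the inequality $k(t-1)/2\leq k(t-2)$ is equivalent to $t\geq 3$. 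Therefore $d=k(t-1)$, i.e.\ $|L|=q^{t-1}$ and $L=\fqt$, so no proper subfield of $\fqt$ can contain $N^{-1}(1)$.

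There is no genuine obstacle in this proof; it is entirely elementary once the order of $N^{-1}(1)$ is known. The one point worth a remark is the role of the hypothesis $t>2$: it is exactly what makes both $|N^{-1}(1)|>q^{t-2}$ and $k(t-1)/2\leq k(t-2)$ hold, and the statement genuinely fails for $t=2$, where $N^{-1}(1)=\{1\}$ lies in the prime subfield $\mathbb{F}_p\subsetneq\fqt$ as soon as $k>1$. I would include one sentence to this effect so that the necessity of the assumption is transparent.
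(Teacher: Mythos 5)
Your proof is correct and follows essentially the same route as the paper's: both compare the order of a hypothetical subfield $\mathbb{F}_{p^d}$ containing $N^{-1}(1)$ (a set of size $\frac{q^{t-1}-1}{q-1}>q^{t-2}$) with the divisibility constraint $d\mid k(t-1)$, concluding that $d>k(t-2)$ is incompatible with $d\leq k(t-1)/2$ when $t\geq 3$. The only cosmetic difference is that the paper phrases it as a contradiction while you conclude $d=k(t-1)$ directly; your closing remark on the failure at $t=2$ is a nice addition.
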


\begin{proof}
Assume to the contrary that $K$ is such a subfield of $\fqt=\mathbb{F}_{p^{k(t-1)}}$. Then by \Cref{lemma:FiniteFieldBasics} $K=\zb F_{p^s}$ for some proper divisor $s$ of $k(t-1)$, in particular $s\leq \frac{k(t-1)}{2}$ holds. On the other hand by \Cref{lemma:NormProperties}
\begin{equation*} 
p^s = \anz{K} \geq \anz{N^{-1}(1)} = \frac{q^{t-1}-1}{q-1} = \fsum{i=0}{t-2} q^i > q^{t-2} = p^{k(t-2)},
\end{equation*}
and hence $s>k(t-2)$.  Now together with the earlier inequality this gives $t-2 < \frac{s}{k} \leq \frac{t-1}{2}$, which is impossible for $t\geq 3$.
\end{proof}

Note that in our case \Cref{lemma:KernelIsField} just says that $K=\fqt$. Now to finish the proof of the first part of Theorem~\ref{thm:aut} in the case $t>2$ first recall that given $\Phi\in \Aut(\NG(q,t))$ by \Cref{lemma:AutoDecomposition} there exist permutations $\Psi:\fqt\to\fqt$ and $\psi\in\fqu\to\fqu$ such that
\begin{equation*}
\Phi\big( (X,x)\big) = \big( \Psi(X), \psi(x) \big)
\end{equation*}
and $\Psi(-X)=-\Psi(X)$ for every $X\in \fqt$. 

Now for any $X\neq Y\in\fqt$, the vertex $(X,N(X-Y))$ is adjacent to $(-Y,1)$, hence 
\begin{equation*}
\Phi\Big(\big(X,N(X-Y)\big)\Big)=\Big(\Psi(X),\psi\big(N(X-Y)\big)\Big)
\end{equation*}
and 
\begin{equation*}
\Phi\big((-Y,1)\big)=\big(\Psi(-Y),\psi(1)\big)=\big(-\Psi(Y),\psi(1)\big)
\end{equation*}
must be also adjacent meaning that we must have 
\begin{equation}\label{eq:AutProofNorm}
N\big(\Psi(X)-\Psi(Y)\big) = \psi\big((N(X-Y)\big) \psi(1).
\end{equation}
Note that $\kappa(x) = \psi(1) \psi(x)$ is a permutation of $\fqu$, and so we can apply \Cref{thm:Lenstra} with $F=K=\fqt$, $E=\fqu$, $\Phi=N$ and $\rho=\Psi$ to get that $\forall X\in\fqt$
\begin{equation*}
\Psi (X) = \beta(X) + A
\end{equation*}
for some $\fqt$-semilinear $\fqt$-automorphism $\beta$ and $A\in \fqt$. Now by \Cref{lemma:Semilinearity} we have $\beta(X)=C \alpha(X)$ for some $C\in\fqt^*$ and $\alpha\in\Aut(\fqt)$, which by \Cref{lemma:FiniteFieldBasics} has the form $\alpha(X)=X^{p^i}$ for some $i\in [k(t-1)]$. Hence
$\Psi(X) = C X^{p^i} + A$. However, as $\Psi(-1)=-\Psi(1)$, we must have $-C+A=-(C+A)$, implying $2A=0$. If $q$ is odd this is possible only if $A=0$ and if $q$ is even then this holds for every $A\in \fqt$.  

Now take any $x\in\fqu$ and evaluate Equation (\ref{eq:AutProofNorm}) for some $X\in N^{-1}(x)$ and $Y=0$ to get
\begin{equation*}
N(C X^{p^i} - C 0^{p^i}) = \psi\big(N(X-0)\big) \psi(1)\ \imp\ \psi(x) = N(C) \psi(1)^{-1} x^{p^i}.
\end{equation*}
Substituting $x=1$ we also obtain $\psi(1)^2=N(C)$. By \Cref{lemma:NormProperties}, $N(C)$ is a square if and only if $C$ is so, hence there exists $C'\in \fqt$ such that $C = C'^2$ and so $\psi(1)=\pm N(C')$. 

Hence with these choices of parameters for every $(X,x)\in \fqt\times \fqu$ we have 
\begin{equation*}
\Phi\big((X,x)\big) = \big( C'^2 X^{p^i}, \pm N(C') x^{p^i} \big)
\end{equation*}
if $q$ is odd and 
\begin{equation*}
\Phi\big((X,x)\big) = \big( C'^2 X^{p^i}+A, \pm N(C') x^{p^i} \big)
\end{equation*}
if $q$ is even, as desired.

\medskip

The only remaining case is when $t=2$. Note that then $t-1=1$ and we simply have $N(X)=X$ for every $X\in \fq$ and hence two vertices $(X,x),(Y,y)\in \fq\times\fq$ are adjacent if $X+Y=xy$. Given $\Phi\in \Aut(\NG)$ let $\Psi:\fqt\to\fqt$ and $\psi\in\fqu\to\fqu$ as before be the permutations guaranteed by \Cref{lemma:AutoDecomposition} such that
\begin{equation*}
\Phi\big( (X,x)\big) = \big( \Psi(X), \psi(x) \big)
\end{equation*}
and $\Psi(-X)=-\Psi(X)$ for every $X\in \fqt$. The latter in particular means that $\Psi(0)=-\Psi(0)$ and hence for $q$ odd we must have $\Psi(0)=0$. 

Let us now for $(X,x)\in \fq\times\fq$ define
\begin{equation*}
\widetilde{\Phi}\big((X,x)\big)=\big(\widetilde{\Psi}(X),\widetilde{\psi}(x)\big)=\left\{\begin{array}{cl}
\Big(\frac{1}{\psi(1)^2}\Psi(X),\frac{1}{\psi(1)}\psi(x)\Big) & \text{ if $q$ is odd}\\
\Big(\frac{1}{\psi(1)^2}\big(\Psi(X)-\Psi(0)\big),\frac{1}{\psi(1)}\psi(x)\Big) & \text{ if $q$ is even}
\end{array}\right..
\end{equation*}
The map $\widetilde{\Phi}$ is clearly also an automorphism of $\NG(q,2)$ with $\widetilde{\Psi}(-X)=-\widetilde{\Psi}(X)$ for every $X\in \fq$ and we also have $\widetilde{\Phi}\big((0,1)\big)=\big(\widetilde{\Psi}(0),\widetilde{\psi}(1)\big)=(0,1)$. 

As for every $X\in \fqu$ the vertices $(X,X)$ and $(0,1)$ are adjacent, so must be their images under $\widetilde{\Phi}$, implying that we have $\widetilde{\Psi}(X)=\widetilde{\Psi}(X)+\widetilde{\Psi}(0)=\widetilde{\psi}(X)\widetilde{\psi}(1)=\widetilde{\psi}(X)$ for every $X\in \fqu$.

Similarly, for $X\neq Y\in \fq$ the vertices $(X,X-Y)$ and $(-Y,1)$, and hence their images are also adjacent. Accordingly we get $\widetilde{\Psi}(X)-\widetilde{\Psi}(Y)=\widetilde{\Psi}(X)+\widetilde{\Psi}(-Y)=\widetilde{\psi}(X-Y)\widetilde{\psi}(1)=\widetilde{\Psi}(X-Y)$ for every $X\neq Y\in \fq$. However as we also have $\widetilde{\Psi}(0)=0$ this means that $\widetilde{\Psi}$ is an additive map on $\fq$.

Finally, for $X,Y\in \fqu$ looking at the images of the adjacent vertices $(XY,X)$ and $(0,Y)$ we obtain $\widetilde{\Psi}(XY)=\widetilde{\Psi}(XY)-\widetilde{\Psi}(0)=\widetilde{\psi}(X)\widetilde{\psi}(Y)=\widetilde{\Psi}(X)\widetilde{\Psi}(Y)$. As $\widetilde{\Psi}(0)=0$, the above clearly also holds when $X$ or $Y$ are $0$ which means that $\widetilde{\Psi}$ is also a multiplicative map on $\fq$ and hence is actually an automorphism of $\fq$. According to \Cref{lemma:FiniteFieldBasics} then $\widetilde{\Psi}(X)=X^{p^i}$ for some $i\in [k(t-1)]$.

In terms of $\Phi$ this just means that it has the form
\begin{equation*}
\Phi\big((X,x)\big)=\big(\Psi(X),\psi(x)\big)=\left\{\begin{array}{cl}
\Big(\psi(1)^2 X^{p^i},\psi(1)x^{p^i}\Big) & \text{ if $q$ is odd}\\
\Big(\psi(1)^2 X^{p^i}+\Psi(0),\psi(1)x^{p^i}\Big) & \text{ if $q$ is even.}
\end{array}\right.
\end{equation*}
To finish, just note that we have $N(\Psi(1))=\Psi(1)$, so this is really the form we wanted.

Now we turn our attention to the group struccture of $\Aut(\NG(q,t))$. To begin with, we define the following subgroups of $\Aut(\NG(q,t))$.
\begin{alignat*}1
\Aut_F &= \men{ \pi_i: (X,x) \mapsto (X^{p^i},x^{p^i}) \mid i\in [k(t-1)] }\isom Z_{k(t-1)} \\
\Aut_M &= \men{ \sigma_C: (X,x) \mapsto (C^2 X, N(C) x) \mid C\in\fqt^* }\\
&\isom \left\{\begin{array}{cl}
Z_{q^{t-1}-1}& \text{ if $q$ is even or both $q$ and $t-1$ are odd}\\
Z_{\frac{q^{t-1}-1}{2}} & \text{ if $q$ is odd and $t-1$ is even.}
\end{array}\right.
\end{alignat*}
In addition, for $q$ odd we also consider the subgroup
\begin{equation*}
\Aut_S = \men{ \omega_\ep: (X,x) \mapsto (X,\ep x) \mid \ep \in \men{-1,+1} }\isom Z_{2}
\end{equation*}
and for $q$ even the subgroup
\begin{equation*}
\Aut_L = \men{ \mu_A: (X,x) \mapsto (X+A, x) \mid A\in\fqt}\isom \kl{Z_p}^{k(t-1)}.
\end{equation*}
Now take $\Phi\in \Aut(\NG(q,t))$. When $q$ is odd then, according to the first part of Theorem~\ref{thm:aut}, there exists $C\in \fqt^*$, $i\in [k(t-1)]$ and $\ep\in\men{-1,+1}$ such that $\Phi\big((X,x)\big)=\big(C^2X^{p^i},\ep N(C)x^{p^i}\big)$ and hence $\Phi=\omega_\ep\circ  \sigma_C \circ \pi_i$. Similarly, when $q$ is even, again according to the first part of Theorem~\ref{thm:aut}, there exists $C\in \fqt^*$, $i\in [k(t-1)]$ and $A\in\fqt$ such that $\Phi\big((X,x)\big)=\big(C^2X^{p^i}+A,N(C)x^{p^i}\big)$ and hence $\Phi=\mu_A\circ  \sigma_C \circ \pi_i$. This shows that these groups generate $\Aut(\NG(q,t))$, i.e.
\begin{equation*}
\Aut(\NG(q,t))=\left\{\begin{array}{ll}
\Aut_S\circ\Aut_M\circ \Aut_F & \text{ if $q$ is odd}\\
\Aut_L\circ\Aut_M\circ \Aut_F & \text{ if $q$ is even.}
\end{array}
\right.
\end{equation*}
To prove the appropriate group structure suppose first that $q$ is odd and consider the term $\Aut_S\circ\Aut_M$. If $t-1$ is also odd, then $\omega_{-1}=\sigma_{-1}$, hence $\Aut_S\subseteq\Aut_M$ and so $\Aut_S\circ\Aut_M=\Aut_M$. If $t-1$ is even, then $\Aut_S\cap\Aut_M=\{id\}$, elements from the two parts clearly commute and $\Aut_S\circ\Aut_M$ is also a subgroup of $\Aut(\NG(q,t))$, hence $\Aut_S\circ\Aut_M=\Aut_S\times\Aut_M$.

To add $\Aut_F$ we apply \Cref{lemma:ProductCondition} with $G=\Aut(\NG(q,t))$, $N=\Aut_s\circ\Aut_M$ and $H=\Aut_F$. For this we first need to check that $\displaystyle (\omega_{\ep}\circ\sigma_C)^{\pi_i}\in \Aut_S\circ\Aut_M$ for every $\ep\in \{-1,+1\}$, $C\in \fqt^*$ and $i\in [k(t-1)]$:
\begin{equation*}
(\omega_\ep\circ \sigma_C)^{\pi_i}=\pi_i^{-1}\circ (\omega_\ep\circ \sigma_C)\circ \pi_i=\pi_{k(t-1)-i}\circ\omega_\ep\circ \sigma_C\circ \pi_i=\omega_{\ep'}\circ \sigma_{C'}\in \Aut_S\circ\Aut_M,
\end{equation*}
where $\ep'=\ep^{p^{k(t-1)-i}}$ and $C'=C^{p^{l(t-1)-i}}$. We clearly also have that $(\Aut_S\circ \Aut_M)\cap \Aut_M=\{id\}$, so \Cref{lemma:ProductCondition} implies that $\Aut_S\circ \Aut_M$ is a normal subgroup of $\Aut_S\circ\Aut_M\circ \Aut_F=\Aut(\NG(q,t))$ and
\begin{align*}
\Aut(\NG(q,t))&=\Aut_S\circ\Aut_M\circ \Aut_F=(\Aut_S\circ\Aut_M)\rtimes \Aut_F\\
&=\left\{
\begin{array}{ll}
\Aut_M\rtimes \Aut_F= Z_{q^{t-1}-1}\rtimes Z_{k(t-1)} & \text{if $t-1$ is odd}\\
\left(\Aut_S\times \Aut_M\right)\rtimes \Aut_F=\left(Z_2\times Z_{\frac{q^{t-1}-1}{2}}\right)\rtimes Z_{k(t-1)} & \text{if $t-1$ is even.}
\end{array}
\right.
\end{align*}
Now suppose $q$ is even and consider first $\Aut_L\circ\Aut_M$. We again apply \Cref{lemma:ProductCondition}, now with $G=\Aut(\NG(q,t))$, $N=\Aut_L$ and $H=\Aut_M$. First we check that $\mu_A^{\sigma_C}\in \Aut_L$ for every $A\in \fqt$ and $C\in \fqt^*$:
\begin{equation*}
\mu_A^{\sigma_C}=\sigma_C^{-1}\circ \mu_A \circ\sigma_C=\sigma_{C^{-1}}\circ \mu_A \circ\sigma_C=\mu_{(C^{-1})^2A}\in \Aut_L.
\end{equation*}
We clearly also have $\Aut_L\cap\Aut_M=\{id\}$, so by \Cref{lemma:ProductCondition} $\Aut_L\circ \Aut_M$ is a subgroup of $\Aut(\NG(q,t))$, $\Aut_L$ is normal subgroup of it and we have
\begin{equation*}
\Aut_L\circ \Aut_M=\Aut_L\rtimes\Aut_M.
\end{equation*}
To obtain the whole of $\Aut(\NG(q,t))$, we apply \Cref{lemma:ProductCondition} one last time, now with $G=\Aut(\NG(q,t))$, $N=\Aut_L\circ \Aut_M=\Aut_L\rtimes \Aut_M$ and $H=\Aut_F$. We start by checking that $(\mu_A\circ\sigma_C)^{\pi_i}\in \Aut_L\circ\Aut_M$ for every $A\in \fqt$, $C\in \fqt^*$ and $i\in [k(t-1)]$: 
\begin{equation*}
(\mu_A\circ\sigma_C)^{\pi_i}=\pi_i^{-1}\circ \mu_A\circ\sigma_C\circ\pi_i=\pi_{k(t-1)-i}\circ \mu_A\circ\sigma_C\circ\pi_i=\mu_{A'}\circ\omega_{C'}\in \Aut_L\circ\Aut_M,
\end{equation*}
where $A'=A^{p^{k(t-1)-i}}$ and $C'=C^{p^{l(t-1)-i}}$. We clearly also have that $(\Aut_L\circ \Aut_M)\cap \Aut_F=\{id\}$, so using \Cref{lemma:ProductCondition} we get that $\Aut_L\circ \Aut_M$ is a normal subgroup of $\Aut_L\circ\Aut_M\circ \Aut_F=\Aut(\NG(q,t))$ and
\begin{align*}
\Aut(\NG(q,t))&=\Aut_L\circ\Aut_M\circ \Aut_F=(\Aut_L\circ\Aut_M)\rtimes \Aut_F\\
&=\left(\Aut_L\rtimes \Aut_M\right)\rtimes \Aut_F=\left((Z_p)^{k(t-1)}\rtimes Z_{q^{t-1}-1}\right)\rtimes Z_{k(t-1)}.
\end{align*}

\end{proof}

\section{Concluding remarks}\label{sec:conclusion}

{\bf Common neighbourhoods.} Recall that in Theorem~\ref{thm:main}(b)
we had to assume that $q$ is odd. We note that an analoguous result
can be shown for even characteristic as well. Namely, it holds that
$\deg(T)=q^{t-3} + O(q^{t-3.5})$ for all but $o(n^3)$ triples $T$ in
$\NG(q,t)$ with $q=2^k$ and $t\geq 4$. Furthermore the exceptional
cases can also be characterized. The main idea of the proof for odd
characteristic can be adapted, but the technicalities become different.
%It states that if $q=2^k$ and $|T|=3$, then 
%%
%\begin{equation}
%\deg(T) = \absw{1-\xi(T) & \text{if } t=3 \text{ and }c_1+c_2=1, \\
%1+(-1)^{\text{Tr}\left(\frac{c_1}{(1+c_1+c_2)^2}\right)}-\xi(T) & \text{if } t=3 \text{ and }c_1+c_2\neq 1\\ 
%6 -\xi(T) & \text{if } t=4, (c_1,c_2) = (1,1) \text{ and }k\text{ is odd,} \\
%8 -\xi(T) & \text{if } t=4, (c_1,c_2) = (1,1) \text{ and }k\text{ is even,} \\
%q^{t-3} + O(q^{t-3.5})  & \text{otherwise,}} 
%\end{equation}
%%
%where $\xi(T)$, $c_1=c_1(T)$, $c_2=c_2(T)$ are defined as before and $\text{Tr}$ is the trace function from $\fq$ to $\mathbb{F}_2$, i.e. for $X\in \fq$ we have $\text{Tr}(X)=X+X^2+X^{2^2}+\cdots+X^{2^{k-1}}$. 
Together with Theorem~\ref{thm:main}(b) this extension settles the
question about common neighbourhoods of triples of vertices
completely. Based on computer calculations we conjecture that the
analogous ``$\ell$-wise independence'' phenomenon occurs for larger sets of vertices as well.
\begin{conjecture}\label{conj:neighb}
For any prime power $q$ and integers $4\leq \ell<t$ all but $o(n^\ell)$ sets
of $\ell$ vertices in $\NG(q,t)$ have $(1+o(1))q^{t-\ell}$ common neighbours.
\end{conjecture}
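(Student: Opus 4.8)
A proof of Conjecture~\ref{conj:neighb} could proceed by a second moment argument exploiting the fact, recalled in the introduction and due to Alon--R\"odl~\cite{AR} and Szab\'o~\cite{Sz}, that the adjacency matrix $A$ of $\NG(q,t)$---a graph on $n=q^{t-1}(q-1)$ vertices of degree $q^{t-1}-1$ up to an additive $O(1)$---has all of its nontrivial eigenvalues of absolute value at most $\lambda=q^{(t-1)/2}$, i.e.\ ``as small as it gets''. Since $q^{t-\ell}=(1+o(1))\,d^{\ell}/n^{\ell-1}$ with $d\approx q^{t-1}$, the assertion is precisely that generic $\ell$-sets are codegree-regular. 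It is therefore enough to bound, for $\varepsilon=\varepsilon(q)\to0$ slowly, the number of ordered $\ell$-tuples $(u_1,\dots,u_\ell)$ of vertices whose common degree deviates from $d^{\ell}/n^{\ell-1}$ by more than a factor $1+\varepsilon$; the $o(n^{\ell})$ tuples with a repeated or a non-generic coordinate (empty common neighbourhood) are set aside at the outset.

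The common degree of $\{u_1,\dots,u_\ell\}$ equals $\sum_{w}\prod_{i}A_{w,u_i}$, so
\[
\sum_{u_1,\dots,u_\ell}\Bigl(\,\sum_{w}\textstyle\prod_i A_{w,u_i}\Bigr)^{2}=\sum_{w,w'}\bigl((A^{2})_{w,w'}\bigr)^{\ell}.
\]
Writing $A=\tfrac{d}{n}J+F$ with $F$ symmetric and (up to the $O(1)$ irregularity from the loop vertices, absorbed by a routine perturbation) $F\mathbf 1=0$, $\|F\|\le\lambda$, one has $A^{2}=\tfrac{d^{2}}{n}J+F^{2}$, where $F^{2}$ has spectral norm $\le\lambda^{2}$, Frobenius norm squared $\le\lambda^{2}\operatorname{tr}(A^{2})=O(q^{3t-2})$, and every entry of size $O(q^{t-2})$ (on the diagonal it is $d-d^{2}/n$; off the diagonal it is $\mathrm{codeg}(w,w')-d^{2}/n$, which is $O(q^{t-2})$ for generic pairs by Theorem~\ref{thm:main}(a)). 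Expanding $(\tfrac{d^{2}}{n}J+F^{2})^{\ell}$ entrywise and summing over $w,w'$: the pure $J$-term gives $n^{2}(d^{2}/n)^{\ell}=(1+o(1))\,n^{\ell}(d^{\ell}/n^{\ell-1})^{2}$, the term linear in $F^{2}$ is $O(n)\cdot(d^{2}/n)^{\ell-1}$ since $\mathbf 1^{\top}F^{2}\mathbf 1=\|F\mathbf 1\|^{2}=O(n)$, and each remaining term ($2\le k\le\ell$) is at most $\binom{\ell}{k}(d^{2}/n)^{\ell-k}\bigl(\max_{w,w'}|F^{2}_{w,w'}|\bigr)^{k-2}\|F^{2}\|_{F}^{2}\le q^{(t-2)\ell+t+2}$. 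Together with the elementary identities $\sum_{u_1,\dots,u_\ell}\deg(\{u_1,\dots,u_\ell\})=(1+o(1))nd^{\ell}$ and $\sum_{u_1,\dots,u_\ell}1=n^{\ell}$ this yields
\[
\sum_{u_1,\dots,u_\ell}\Bigl(\deg(\{u_1,\dots,u_\ell\})-\tfrac{d^{\ell}}{n^{\ell-1}}\Bigr)^{2}=O_\ell\!\bigl(q^{(t-2)\ell+t+2}\bigr)=o\!\bigl(n^{\ell}\,q^{2(t-\ell)}\bigr),
\]
where the last estimate uses $\ell<t$. By Chebyshev's inequality all but $o(n^{\ell})$ ordered $\ell$-tuples then have common degree $(1+o(1))q^{t-\ell}$; dividing by $\ell!$ passes from tuples to sets, which would establish Conjecture~\ref{conj:neighb}.

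A variant closer in spirit to Section~\ref{sec:common-neighborhoods} would instead use Lemma~\ref{lemma:DegreeHelper} to write $\deg(U)$ as the number of $Y\in\fqt$ with $N(Y+B_i)=b_i$ for $i\in[\ell-1]$, expand each indicator in multiplicative characters of $\fqu$ pulled back through the norm to $\fqt$, bound the resulting sums $\sum_{Y\in\fqt}\prod_i(\chi_i\circ N)(Y+B_i)$ by $O_\ell(q^{(t-1)/2})$ whenever not all $\chi_i$ are trivial (Weil's estimate, the polynomial $\prod_i(Y+B_i)^{a_i}$ not being a perfect power since the $B_i$ are distinct), and then run the same second moment over $(b_1,\dots,b_{\ell-1})$ for fixed $B_i$. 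Either way the conceptual input is light: it is essentially the standard fact that an $(n,d,\lambda)$-graph with $\lambda\asymp\sqrt d$ has codegree-regular $\ell$-sets for bounded $\ell$.

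The honest caveat---and, I suspect, the reason this is only stated as a conjecture---is that such an argument says nothing about the $o(n^{\ell})$ exceptional sets. Pinning down \emph{which} $\ell$-sets carry an anomalously large common neighbourhood (the $\ell\ge4$ analogue of the $(c_1,c_2)=(1,-1)$ triples isolated in Theorem~\ref{thm:main}(b)), and upgrading ``all but $o(n^{\ell})$'' to a statement valid for \emph{all} generic $\ell$-sets with a power-saving error term in the spirit of Theorem~\ref{thm:main}, appears to require a multivariable generalisation of the recursive norm-equation analysis behind Lemma~\ref{lemma:TripleNeighRec}. The combinatorics of that reduction, and in particular the enumeration of the degenerate configurations it produces, becomes considerably more intricate once $\ell\ge4$, and I would expect this to be the main obstacle.
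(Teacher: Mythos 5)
This statement is labeled as a \emph{conjecture} in the paper (see Section~\ref{sec:conclusion}); the authors offer no proof, citing only computer evidence. So there is no argument of theirs to compare against. What is worth saying is that your spectral second-moment argument does appear to establish the conjecture as stated, modulo one bookkeeping slip.

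The slip is in the claim that ``every entry of $F^2$ is of size $O(q^{t-2})$.'' The off-diagonal entries are fine (for non-generic pairs $(F^2)_{w,w'}=-d^2/n=O(q^{t-2})$, and for generic pairs $\mathrm{codeg}(w,w')-d^2/n$ is in fact only $O(1)$ by Theorem~\ref{thm:main}(a)), but the diagonal entry $(F^2)_{w,w}=d-d^2/n$ is $\Theta(q^{t-1})$, not $O(q^{t-2})$. Consequently the $k$-th term in your expansion, for $2\le k\le\ell$, is governed by the diagonal contribution $n\,d^k$ rather than by $(\max|F^2|)^{k-2}\|F^2\|_F^2$, and the dominant term is $k=\ell$, giving
\[
\sum_{u_1,\dots,u_\ell}\Bigl(\deg(\{u_1,\dots,u_\ell\})-\tfrac{d^{\ell}}{n^{\ell-1}}\Bigr)^{2}=O\!\bigl(n\,d^{\ell}\bigr)=O\!\bigl(q^{(t-1)\ell+t}\bigr),
\]
which is larger than your stated $O(q^{(t-2)\ell+t+2})$ by a factor of $q^{\ell-2}$. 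The argument nevertheless survives: $q^{(t-1)\ell+t}=o\bigl(n^{\ell}q^{2(t-\ell)}\bigr)$ holds precisely because $\ell<t$ (the ratio is $q^{\ell-t}$), so Chebyshev with $\varepsilon=q^{(\ell-t)/4}$ still gives $o(n^{\ell})$ exceptional ordered tuples, and hence $o(n^{\ell})$ exceptional sets. In other words, the variance is essentially $n\,d^{\ell}$, which is the ``diagonal'' contribution $\sum\deg(\cdot)$ to $\sum\deg(\cdot)^{2}$, and the Chebyshev threshold is $\sqrt{\mu}=q^{(t-\ell)/2}=o(q^{t-\ell})$; this is exactly where the hypothesis $\ell<t$ enters.

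Two further remarks. First, the character-sum variant you sketch does \emph{not} yield a pointwise estimate: the Weil error $O_\ell(q^{(t-1)/2})$ only beats the main term $q^{t-\ell}$ when $t\ge 2\ell$, and indeed Theorem~\ref{thm:main}(b) (the $(c_1,c_2)=(1,-1)$ triples in $\NG(q,4)$) and Theorem~\ref{thm:K46} show that anomalous sets genuinely exist when $t$ is small relative to $\ell$; so that route also needs the averaging you mention. Second, your closing caveat is apt: the argument gives no structural information about the exceptional $o(n^\ell)$ sets and therefore does not, by itself, extend the proof of Theorem~\ref{thm:subgraph} (whose embedding argument requires one-sided control of $\delta_d(q,t)$ for \emph{all} generic $d$-tuples, not just typical ones). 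Still, for the conjecture exactly as written, your argument looks sound once the diagonal entries of $F^2$ are handled correctly, and it shows that the statement is really a generic $(n,d,\lambda)$-graph fact rather than anything peculiar to norm graphs.
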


\vspace{0.5cm}

\noindent {\bf Complete bipartite graphs in projective norm graphs.}
As already discussed in the introduction, it is a fundamental problem
to determine for $t\geq 4$ the value of $s_t$, the largest integer
such that $\NG(q,t)$ contains $H=K_{t,s_t}$ for every large enough
prime power $q$. Note that Theorem~\ref{thm:K46}, because of the
annoying missing cases of characteristic $2$ and $3$, does not yet imply
$s_4=6$, but computer calculations strongly suggest this being the case. For larger values of $t$ the question remains widely open. 

\vspace{0.5cm}

\noindent {\bf Quasirandomness.} In Section~\ref{sec:applications} we
proved that if $q$ is an odd prime and $t\geq 4$ an integer then
$\NG(q,t)$ is $H$-quasirandom whenever $H$ is a fixed simple
$3$-degenerate graph. The extension of Theorem~\ref{thm:main}(b) 
also implies this to even $q$. A positive answer to
Conjecture~\ref{conj:neighb} would directly result in a generalization
of Theorem~\ref{thm:subgraph} stating that for any prime power $q$ and
integer $t\geq 3$ the projective norm graph $\NG(q,t)$ is $H$-quasirandom
for every fixed simple $(t-1)$-degenerate graph $H$.

It would be also interesting to study what can we say beyond the scope
of Conjecture~\ref{conj:neighb}, about the containment of an any fixed
graph.  Especially interesting would be the cases of cliques.
The so-called clique-graph of the projective norm graphs were
expliciteley used by Alon and Pudlak~\cite{AP} for their constructions
for the asymmetric Ramsey problem. 
They lower bound the clique number $\omega(NG(q,t))$ by the
Expander Mixing Lemma, which is probably far from being tight. In this paper we go
beyond that and show not only the existence of $K_4$, but also the
$K_4$-quasirandomness of $\NG(q,t)$ for $t\geq 4$. We are, however, still very far from the understanding of the behaviour of the clique number.
Besides its exact determination there are
several other intriguing directions. 
We think that once a ``nice'' fixed graph $H$ is contained in the projective
norm graph for every large enough $q$, then there are the ``right''
number of copies of it.
\begin{conjecture}\ \\
(i) For every $2\leq t\leq s\leq s_t$ the projective norm graph $\NG(q,t)$ is
$K_{t,s}$-quasirandom. \\
(ii) If $s\leq \omega (\NG(q,t))$ for every large enough $q$, then $\NG(q,t)$
is $K_s$-quasirandom.
%Is there a simple graph $H$ for which the projective norm graphs (for some value of $t$) are neither $H$-free nor $H$-quasirandom?
\end{conjecture}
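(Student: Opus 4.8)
The upper bound $X_{K_{t,s}}(\NG(q,t))=O(q^{t^2})$ is immediate: the Alon--R\'onyai--Szab\'o bound $\deg(T)\le (t-1)!$ (the parent statement of Theorem~\ref{thm:main}), applied greedily to the two sides of $K_{t,s}$, caps the number of copies, and the quasirandom count $n^{v(K_{t,s})}(d/n)^{e(K_{t,s})}$ equals $\Theta(q^{t^2})$. So the entire content of $(i)$ is the matching lower bound $X_{K_{t,s}}(\NG(q,t))=\Omega(q^{t^2})$, equivalently that a \emph{positive proportion} of $t$-element vertex sets $T$ satisfy $\deg(T)\ge s$. The plan is to translate this into an equidistribution statement in the style of Deligne--Katz.

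First, package the combinatorics. By Lemma~\ref{lemma:DegreeHelper} together with the affine normalisations used in the proof of Lemma~\ref{lemma:reduction}, for a generic $t$-set $T$ the number $\deg(T)$ equals, up to an additive $0$ or $-1$, the number of $\mathbb F_{q^{t-1}}$-solutions of a normalised norm system $N(Y)=c_1,\ N(Y+1)=c_2,\ N(Y+\gamma_3)=c_3,\ \dots,\ N(Y+\gamma_{t-1})=c_{t-1}$; this solution count depends only on a ``type'' $\tau=(\gamma_3,\dots,\gamma_{t-1};c_1,\dots,c_{t-1})$ ranging over a parameter variety $B$ over $\mathbb F_q$ of dimension $(t-1)(t-2)$, and the $\mathbb F_q$-points of $B$ account for all but a $o(1)$-fraction of the $\sim q^{t^2}$ vertex $t$-tuples. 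Separating the Frobenius twists $Y,Y^q,\dots,Y^{q^{t-2}}$ into independent variables, exactly as in Section~\ref{sec:quadruples} (and as in the Koll\'ar--R\'onyai--Szab\'o lemma), converts these norm equations into a multilinear system whose solution scheme $\mathcal X_\tau\subseteq\mathbb A^{t-1}$ has \emph{generically} exactly $(t-1)!$ geometric points; the $\mathbb F_{q^{t-1}}$-solutions of the original system are precisely the points of $\mathcal X_\tau$ on the twisted diagonal $\{Y_j=Y_0^{q^j}\}$, i.e.\ the fixed points of $\mathrm{Frob}_q^{\,t-1}$ acting on $\mathcal X_\tau(\overline{\mathbb F}_q)$.

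The decisive input is then a Chebotarev/equidistribution estimate for the family $\mathcal X\to B$: as $\tau$ ranges over $B(\mathbb F_q)$, the Frobenius conjugacy class on the $(t-1)!$-point generic fibre equidistributes in the geometric monodromy group $M\le S_{(t-1)!}$, with error $o(|B(\mathbb F_q)|)$ uniform in $q$ (a Lang--Weil/Deligne-type estimate for the relevant sheaf on $B$, once the branch locus -- the types where the count drops below $(t-1)!$ -- is shown to be a proper closed subset of $B$). Granting this, the number of $t$-sets with $\deg(T)\ge s$ equals $\big(|M|^{-1}\sum_{g\in M,\ \mathrm{fix}(g)\ge s}|g^{M}|\big)\,q^{t^2}+o(q^{t^2})$, and $s_t$ turns out to be exactly $\max\{\mathrm{fix}(g):g\in M\}$ -- for ``$\ge$'' one reads off a good $T$ from an element of $M$ with the most fixed points, and for ``$\le$'' one uses that $K_{t,s}\subseteq\NG(q,t)$ for arbitrarily large $q$ already forces some $g\in M$ with $\mathrm{fix}(g)\ge s$, again via the same equidistribution. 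Hence for every $s\le s_t$ the bracketed coefficient is a positive constant, so $X_{K_{t,s}}(\NG(q,t))=\Theta(q^{t^2})$ and $\NG(q,t)$ is $K_{t,s}$-quasirandom, which is $(i)$. Part $(ii)$ is handled analogously: one builds a $K_s$ greedily, each extension step being governed by the common neighbourhood of a clique-forming vertex set, which is again cut out over $\mathbb F_q$ by norm equations; so $\omega(\NG(q,t))$ equals the largest $s$ for which the monodromy group of the corresponding family contains an element making the relevant fibre non-empty, and $s\le\omega(\NG(q,t))$ for all large $q$ then yields a positive density of $K_s$'s, i.e.\ $X_{K_s}(\NG(q,t))=\Theta(q^{ts-\binom s2})$ (for the matching upper bound, at least when $s\le t+1$, one can invoke the Alon--Shikhelman bound for cliques in $K_{t,(t-1)!+1}$-free graphs).

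The main obstacle -- and the reason the conjecture is open even for $t=4$ -- is the geometry of the family $\mathcal X\to B$: one must show it is generically \'etale, control the degeneration locus, identify (or at least usefully lower-bound) the monodromy group $M$, and make the equidistribution quantitative with an error term uniform in $q$, all while honestly carrying the Weil restriction from $\mathbb F_{q^{t-1}}$ to $\mathbb F_q$ that is built into the norm. For $t=3$ one has $(t-1)!=2$, $M=S_2$, and the whole scheme collapses to the explicit quadratic-character computation already carried out in Theorem~\ref{thm:main}(b); for $t=4$ it is the monodromy of the degree-$6$ family underlying $\mathcal X$ that must be pinned down, which is exactly what the explicit computations of this paper stop short of -- in particular it is not yet known unconditionally that $\max\{\mathrm{fix}(g):g\in M\}=6$, equivalently that $s_4=6$.
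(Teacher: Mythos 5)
The statement you are addressing is presented in the paper as a \emph{conjecture} in the concluding remarks; the paper does not prove it, so there is no ``paper proof'' to compare your argument against. What you have written is a proposed research program, not a proof --- and to your credit your final paragraph says as much.

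The reductions you carry out are sound: $X_{K_{t,s}}(\NG(q,t))=O(q^{t^2})$ does follow from $\deg(T)\le(t-1)!$ by a greedy count, and the matching lower bound reduces to showing a positive proportion of generic $t$-sets $T$ have $\deg(T)\ge s$. Repackaging via Lemma~\ref{lemma:DegreeHelper}, the Frobenius-twist change of variables, and a family $\mathcal X\to B$ of generically $(t-1)!$-point fibers, and then appealing to Chebotarev/Deligne--Katz equidistribution of Frobenius in the monodromy group $M$, is a reasonable plan of attack; but the decisive steps --- generic \'etale-ness of $\mathcal X\to B$, identification (or usable lower bound) of $M$, a uniform-in-$q$ error term --- are precisely what you acknowledge as unproven. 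None of them is supplied, so this is not a proof.

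There is also a substantive gap you do not quite flag. You assert $s_t=\max\{\mathrm{fix}(g):g\in M\}$ and deduce the conjecture, but the ``$\le$'' half of that equality does not follow from equidistribution: a copy of $K_{t,s}$ in $\NG(q,t)$ for all large $q$ may live entirely over the degeneration/branch locus of $\mathcal X\to B$, where the generic monodromy is silent, and that locus has $\Theta(q^{t^2-1})$ rational points, plenty to support isolated copies without a positive proportion of good generic types. The paper's own $K_{4,6}$ construction (Theorem~\ref{thm:K46}) is a case in point: it works through quadruples containing triples with $(c_1,c_2)=(1,-1)$, exactly the exceptional value at which the quartic $L$ degenerates to a perfect square, and it only produces $\Theta(q^7)$ copies --- far short of the $\Theta(q^{16})$ that $K_{4,6}$-quasirandomness would require. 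So even granting the entire monodromy machinery, it remains possible that $s_t$ is governed by a thin exceptional stratum rather than by the generic fiber, in which case part $(i)$ of the conjecture would fail near $s=s_t$. A complete argument along your lines must either rule that scenario out or retreat to a weaker statement.
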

Finally, there is very little known about whether there are any
characteristic-specific
subgraphs. We do not know whether there is any fixed
graph $H$ which is contained in projective norm graphs for some
chracteristic $p_1$, but it is not contained in them for some other
charateristic $p_2$. 

\section{Appendix}

\subsection{Finite fields}

For a prime power $q=p^k$ let $\mathbb{F}_q$ and $\mathbb{F}_q^{*}$ denote the finite field of $q$ elements and its multiplicative group, respectively. For the sake of completeness we first recall some basic facts about finite fields. For proofs and details the interested reader may consult e.g. \cite{LN}.

\begin{lemma} \label{lemma:FiniteFieldBasics} Let $p$ be a prime,
  $k\in \mathbb{N}^+$, and $q=p^k$. Then
\ \begin{enumerate}[a)]
\item $\zb F_q$ exists and is unique up to isomorphism.
\item The multiplicative group $\zb F_{q}^*$ is cyclic, i.e. $\zb F_{q}^*\isom \mathbb{Z}_{q-1}$ .
\item As an additive group, $\zb F_{p^k}$ is isomorphic to $\kl{\zb{Z}_p}^k$.
\item The subfields of $\zb F_{p^k}$ are exactly those finite fields $\zb F_{p^s}$ for which $s\mid kl$.
\item The map $x\mapsto x^p$ is an automorphism of $\zb F_{p^k}$, and is called the Frobenius automorphism.
\item The automorphism group $\Aut(\zb F_{p^k})$ is generated by the Frobenius automorphism, i.e. any field automorphism of $\zb F_{p^k}$ is of the form $x \to x^{p^i}$ for some $i\in [k]$.
\item For $s\mid k$ the automorphism $x\mapsto x^{p^s}$ fixes the subfield $\zb F_{p^s}$.  \label{lemma:FiniteFieldBasics:Fix}
\end{enumerate}
\end{lemma}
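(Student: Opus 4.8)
The plan is to derive everything from the single polynomial $f_k(X)=X^{p^k}-X\in\mathbb{F}_p[X]$. Since $f_k'(X)=-1$, the polynomial $f_k$ is separable, so inside a fixed algebraic closure of $\mathbb{F}_p$ it has exactly $p^k$ distinct roots; call this set $R_k$. For part~a) one first checks that $R_k$ is closed under addition, subtraction, multiplication and inversion of nonzero elements: closure under the ring operations uses that $x\mapsto x^p$ commutes with $+$ and $\cdot$, which is part~e) below, so $R_k$ is a field with $|R_k|=p^k$, giving existence. For uniqueness, any field $F$ with $|F|=p^k$ has $|F^*|=p^k-1$, hence $x^{p^k}=x$ for all $x\in F$; thus $F$ consists of roots of $f_k$, so $F=R_k$ in a common algebraic closure, and abstractly $F$ is the splitting field of $f_k$ over its prime subfield $\mathbb{F}_p$, which is unique up to isomorphism.

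Next I would dispatch the structural parts. For c): $\mathbb{F}_{p^k}$ has characteristic $p$, so its additive group is an $\mathbb{F}_p$-vector space, and counting elements forces dimension $k$, i.e.\ $\mathbb{F}_{p^k}\cong(\mathbb{Z}_p)^k$ as groups. For e): the Frobenius map $\varphi\colon x\mapsto x^p$ is additive because $\binom{p}{i}\equiv 0\pmod p$ for $0<i<p$, so $(x+y)^p=x^p+y^p$; it is clearly multiplicative, and injective because a field homomorphism has trivial kernel; an injective self-map of a finite set is bijective, so $\varphi\in\Aut(\mathbb{F}_{p^k})$. For d): if $\mathbb{F}_{p^s}\subseteq\mathbb{F}_{p^k}$ then the larger field is an $\mathbb{F}_{p^s}$-vector space, so $p^k=(p^s)^m$ for some $m$ and $s\mid k$; conversely $s\mid k$ gives $p^s-1\mid p^k-1$, hence $f_s\mid f_k$, so $R_s\subseteq R_k$ and $R_s=\mathbb{F}_{p^s}$ is a subfield of order $p^s$, unique by the uniqueness argument of~a). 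Part g) is then immediate: for $s\mid k$ every element of $\mathbb{F}_{p^s}=R_s$ satisfies $x^{p^s}=x$, so the $s$-fold composite of $\varphi$ fixes $\mathbb{F}_{p^s}$ pointwise.

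For f) I would show $\Aut(\mathbb{F}_{p^k})=\langle\varphi\rangle\cong\mathbb{Z}_k$ by a two-sided count. The order of $\varphi$ divides $k$ since $\varphi^k=\id$ on $R_k$, and it is no smaller, because $\varphi^s=\id$ would force all $p^k$ elements to be roots of $f_s$, impossible when $s<k$. On the other hand $|\Aut(\mathbb{F}_{p^k})|\le k$: every automorphism fixes $\mathbb{F}_p$, and writing $\mathbb{F}_{p^k}=\mathbb{F}_p(\alpha)$ with $\alpha$ a generator of the multiplicative group (so that $\alpha$ has degree $k$ over $\mathbb{F}_p$), an automorphism is determined by the image of $\alpha$, which must lie among the at most $k$ roots of the minimal polynomial of $\alpha$. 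Hence the two estimates coincide and every automorphism has the form $x\mapsto x^{p^i}$ for some $i\in[k]$.

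The one step I expect to be the real point, rather than bookkeeping, is part~b), the cyclicity of $\mathbb{F}_q^*$, which is also what supplies the degree-$k$ primitive element used in~f). The clean route is the counting lemma for finite abelian groups: if $G$ has order $n$ and $x^d=1$ has at most $d$ solutions for every $d\mid n$, then the number of elements of order exactly $d$ is either $0$ or $\phi(d)$ (Euler's totient); since $\sum_{d\mid n}\phi(d)=n$ and every element has order dividing $n$, each of these counts must equal $\phi(d)$, so there are $\phi(n)\ge 1$ elements of order $n$ and $G$ is cyclic. The hypothesis holds for $G=\mathbb{F}_q^*$ with $n=q-1$ because $X^d-1$ has at most $d$ roots in the field $\mathbb{F}_q$. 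All of this is standard, which is why the authors simply point to a reference such as Lidl--Niederreiter.
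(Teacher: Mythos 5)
Your proposal is correct, but there is nothing in the paper to compare it against: the paper does not prove Lemma~\ref{lemma:FiniteFieldBasics} at all, merely directing the reader to a standard reference (Lidl--Niederreiter~\cite{LN}). What you have written is the usual self-contained development: build $\mathbb{F}_{p^k}$ as the root set $R_k$ of the separable polynomial $X^{p^k}-X$, verify it is a field via Frobenius, get uniqueness as a splitting field, obtain the subfield lattice from the divisibility $X^{p^s}-X \mid X^{p^k}-X$ for $s\mid k$, prove cyclicity of $\mathbb{F}_q^*$ via the counting argument with $\sum_{d\mid n}\phi(d)=n$, and pin down $\Aut(\mathbb{F}_{p^k})$ by the two-sided bound ($\varphi$ has order exactly $k$, and an automorphism is determined by its action on a primitive element, whose minimal polynomial has degree $k$). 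All of these steps are sound; you also correctly read the condition in part~d) as $s\mid k$, fixing what is evidently a typo ($s\mid kl$) in the paper's statement. One very minor expository point: in the existence argument for part~a) you invoke closure of $R_k$ under ring operations by appeal to Frobenius, which you prove in part~e); this is fine since e) does not depend on a), but it is worth stating explicitly that there is no circularity.
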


For a prime power $q$ and $t\in \mathbb{N}^+$ the norm map $N_{q,t}: \mathbb{F}_{q^t} \rightarrow \fq$ is defined as
\begin{equation*}
N_{q,t}(X)=X\cdots X^q\cdots X^{q^{t-1}}.
\end{equation*}
In most cases $q$ and $t$ will be clear from the context (the
parameters of $\NG(q,t)$ at hand), and usually we will simply write $N(X)$ instead of $N_{q,t-1}(X)$.
The following lemma summarizes some important properties of the norm map $N=N_{q,t-1}$.

\begin{lemma} \label{lemma:NormProperties} Let $q$ be a prime power
  and $t\geq 2$ an integer. Then for $N=N_{q,t-1}$ we have the following.
\	\begin{enumerate}[a)]
\item $N(A)\in \mathbb{F}_q$ for every $A\in \mathbb{F}_{q^{t-1}}$ and $N(X)=0 \gdw X=0$.
\item The restriction of $N$ to the multiplicative group $\fqt^*$ is a
  group homomorphism onto $\fq^*$. 
\item $N(x)=x^{t-1}$ for every $x\in\zb F_q$.
\item If $X$ is a generator of $\zb F_{q^{t-1}}^*$ then $N(X)$ is a generator of $\fqu$.
\item $X\in\zb F_{q^{t-1}}^*$ is a square in $\zb F_{q^{t-1}}^*$ if and only if $N(X)$ is a square in $\fqu$.
\item $\anz{N^{-1}(x)} = \frac{q^{t-1}-1}{q-1}$ for every $x\in\fqu$.	
\end{enumerate} 
\end{lemma}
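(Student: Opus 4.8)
The plan is to deduce all six parts from two ingredients: the description of $N=N_{q,t-1}$ on $\fqt^*$ as the product $X\mapsto\prod_{j=0}^{t-2}X^{q^j}$ of the Galois conjugates of the cyclic extension $\fqt/\fq$, and the cyclicity of $\fqt^*$ from \Cref{lemma:FiniteFieldBasics}(b).

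For (a) I would apply the $q$-power Frobenius to $N(A)=A\cdot A^q\cdots A^{q^{t-2}}$: since $A^{q^{t-1}}=A$ for every $A\in\fqt$, the $t-1$ factors are cyclically permuted, so $N(A)^q=N(A)$; as $X^q-X$ has at most $q$ roots and the $q$ elements of $\fq$ are all among them, $c^q=c$ forces $c\in\fq$, hence $N(A)\in\fq$, and the equivalence $N(X)=0\gdw X=0$ is immediate since a product of field elements vanishes iff one of its factors does. Part (b) follows because Frobenius commutes with multiplication, so $N(AB)=N(A)N(B)$, and by (a) $N$ restricts to a group homomorphism $\fqt^*\to\fq^*$; its surjectivity I would postpone and extract from (d). Part (c) is immediate: for $x\in\fq$ one has $x^q=x$, so all $t-1$ factors equal $x$ and $N(x)=x^{t-1}$.

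For (d), fix a generator $g$ of $\fqt^*$ and compute $N(g)=g^{1+q+\cdots+q^{t-2}}=g^{(q^{t-1}-1)/(q-1)}$; since $(q^{t-1}-1)/(q-1)$ divides $q^{t-1}-1=\ord(g)$, the order of $g^{(q^{t-1}-1)/(q-1)}$ equals $q-1$, so $N(g)$ generates $\fq^*$. This at once finishes the surjectivity left open in (b), and since $N$ is then a surjective homomorphism between groups of orders $q^{t-1}-1$ and $q-1$, all its fibres have size $\frac{q^{t-1}-1}{q-1}$, giving (f). Finally, for (e): if $q$ is even then $|\fqt^*|=q^{t-1}-1$ and $|\fq^*|=q-1$ are odd, squaring is a bijection on both groups, and the equivalence is vacuous; if $q$ is odd, write $X=g^m$, so $X$ is a square iff $m$ is even, while $N(X)=N(g)^m$ with $N(g)$ a generator of $\fq^*$ by (d), so $N(X)$ is a square iff $m$ is even as well.

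I do not expect a genuine obstacle, as this is all standard finite-field arithmetic; the only points that need a little care are the order computation in (d)---which rests on the divisibility $(q-1)\mid(q^{t-1}-1)$ and then supplies both the surjectivity in (b) and the fibre count in (f)---and the parity bookkeeping in (e) separating even and odd $q$.
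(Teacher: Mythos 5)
Your proof is correct. The paper itself does not supply a proof of this lemma---it presents it as a collection of standard finite-field facts in the Appendix and refers the reader to Lidl--Niederreiter \cite{LN}---so there is no argument in the paper to compare against. Your self-contained deduction, hinging on the Frobenius fixed-field computation for (a)--(c), the exponent identity $N(g)=g^{(q^{t-1}-1)/(q-1)}$ for the generator computation in (d), the homomorphism fibre count for (f), and the parity bookkeeping for (e), is exactly the standard route and handles the even-$q$ degeneracy in (e) appropriately.
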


\subsection{Direct and semidirect products of groups}

Here we briefly recall the definitions of direct and semidirect product of groups and state a lemma we will be using when proving results about the group structure of the automorphism group of $\NG(q,t)$. For more details we refer to Chapters 4 and 7 of \cite{R}.

For a group $G$ and an element $a\in G$, the map defined by $x\mapsto
x^a=a\inv xa$ is a group automorphism of $G$, called \emph{conjugation
  by $a$}. A subgroup $N\leq G$ is called \emph{normal}, denoted by
$N\triangleleft G$, if $N^a\subset N$ for all $a\in G$. For two
subsets $S_1, S_2 \subseteq G$, we write  $S_1\cdot S_2 = \{ gh : g\in S_1, h\in S_2\}$. Now $G$ is said to be the \emph{internal direct product} of the subgroups $N_1$ and $N_2$, denoted by $G=N_1\times N_2$, if $G=N_1\cdot N_2$, $N_1\cap N_2=\men{1_G}$ and both $N_1$ and $N_2$ are normal subgroups of $G$. 

A natural way of generalizing the inner direct product is to weaken the restriction on the normality of the subgroups. More precisely a group $G$ is said to be the \emph{internal semidirect product} of the subgroups $N$ and $H$, denoted by $G=N\rtimes H$, if $G=N\cdot H$, $N\cap H=\men{1_G}$ and $N$ is a normal subgroup. 

The following lemma (whose proof is an easy exercise) will be used several times to prove that a given group is the inner semidirect product of two of its subgroups.

\begin{lemma} \label{lemma:ProductCondition}
Let $G$ be a group and $N,H$ subgroups such that $N^h\subseteq N$ for every $h\in H$. Then $N\cdot H$ is a subgroup of $G$ and $N\triangleleft N\cdot H$. If $N\cap H=\men{1_G}$, then $N\cdot H=N\rtimes H$.
\end{lemma}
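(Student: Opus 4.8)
The plan is to verify the three assertions in sequence, straight from the definitions of subgroup, normal subgroup and internal semidirect product recalled just above, using the conjugation convention $x^a=a^{-1}xa$ throughout. First I would record a preliminary observation that streamlines everything: since $H$ is a subgroup, $h^{-1}\in H$ for every $h\in H$, so the hypothesis gives both $N^h\subseteq N$ and $N^{h^{-1}}\subseteq N$; conjugating the latter inclusion by $h$ yields $N\subseteq N^h$, hence $N^h=N$ for every $h\in H$. In other words, conjugation by any element of $H$ restricts to an automorphism of $N$.

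To show $N\cdot H$ is a subgroup I would check that it is nonempty and closed under products and inverses. It contains $1_G=1_G\cdot 1_G$. Given $n_1h_1,n_2h_2\in N\cdot H$, I would insert $h_1^{-1}h_1$ to rewrite their product as $\big(n_1\,(h_1 n_2 h_1^{-1})\big)(h_1h_2)$, where $h_1n_2h_1^{-1}=n_2^{h_1^{-1}}\in N$ by the observation, so the product lies in $N\cdot H$. For inverses, $(nh)^{-1}=h^{-1}n^{-1}=(h^{-1}n^{-1}h)h^{-1}$ with $h^{-1}n^{-1}h=(n^{-1})^h\in N$, again by the observation. For normality $N\triangleleft N\cdot H$, I would take an arbitrary $n'h'\in N\cdot H$ and $n\in N$ and compute $(n'h')^{-1}n(n'h')=h'^{-1}(n'^{-1}nn')h'$; the inner factor $n'^{-1}nn'$ lies in $N$ since $N$ is a subgroup, and conjugating it by $h'\in H$ keeps it in $N$, so $N^{n'h'}\subseteq N$. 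Finally, under the extra hypothesis $N\cap H=\{1_G\}$ all three clauses in the definition of internal semidirect product hold---the subset $N\cdot H$ coincides with the subgroup it generates, $N\cap H$ is trivial, and $N$ is normal in $N\cdot H$---which is exactly the assertion $N\cdot H=N\rtimes H$.

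There is no genuine obstacle: the lemma is routine manipulation with the group axioms. The only thing to be careful about is keeping the convention $x^a=a^{-1}xa$ straight, so that the hypothesis is invoked with the correct element ($h$ versus $h^{-1}$) at each step---a concern that the preliminary observation $N^h=N$ for all $h\in H$ removes entirely.
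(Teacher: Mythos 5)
Your proof is correct. The paper gives no proof of this lemma, stating only that it "is an easy exercise," so there is nothing to compare against; your argument is the standard one, and the preliminary upgrade of the hypothesis to $N^h=N$ for all $h\in H$ (using that $H$ is a subgroup, hence closed under inverses) is exactly the right move to make the closure and normality checks clean under the convention $x^a=a^{-1}xa$.
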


\subsection{Characters}

Next we recall some basic facts about characters of finite groups that we will be using in later sections. For proofs and further results the interested reader may consult e.g. \cite[Chapter 5.1]{LN}.

For a finite abelian group $G$ a group homomorphism $\chi$ from $G$ to the multiplicative group $\zb C^*$ of complex numbers is called a \emph{character} of $G$. The smallest integer $m\inn$ such that $\chi^m\equiv 1$ is called the \emph{order} of $\chi$. When $\zb F$ is a finite field and $\chi$ is a character of $\zb F^*$, it is convenient to extend $\chi$ to $0 \in \zb F$ by setting $\chi(0)=0$. Abusing terminology, here we identify $\chi$ with this extension and call the extension itself a character of the field $\zb F$. A nice property of a character $\chi$ of order $m>1$ of a group $G$ is that $\fsum{a\in G}{}\chi(a)=0$. The following result of Weil (see \cite[Thm 5.41]{LN}) states that for characters of finite fields the above result can be generalized.

\begin{theorem}\label{thm:Weil}
Let $\chi$ be an order $m$ character of $\fq$ and $f\in\fq[X]$ a univariate polynomial of degree $d\geq 1$ which is not of the form $cg^m$ for some $c\in\fqu$ and $g\in\fq[X]$. Then
\begin{equation*}
\anz{\fsum{a\in\fq}{} \chi(f(a))} \leq (d-1)\sqrt{q}.
\end{equation*}
If $f=cg^m$ for some $c\in\fqu$ and $g\in\fq[X]$, then 
\begin{equation*}
\fsum{a\in\fq}{} \chi(f(a)) = (q-r)  \cdot \chi(c),
\end{equation*}
where $r$ is the number of distinct roots of $g$ over $\fq$.
\end{theorem}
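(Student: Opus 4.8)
To prove Theorem~\ref{thm:Weil}, the plan is to realise the character sum as a point count on an auxiliary curve and then feed in the Riemann Hypothesis for curves over finite fields. Since $\chi$ has exact order $m$ we have $m\mid q-1$, and the multiplicative characters of $\mathbb{F}_q$ that are trivial on $m$-th powers are precisely $\chi^0,\chi^1,\dots,\chi^{m-1}$; hence for every $t\in\mathbb{F}_q^*$ the equation $y^m=t$ has $\sum_{j=0}^{m-1}\chi^j(t)$ solutions $y\in\mathbb{F}_q$, while it has exactly one solution when $t=0$. Summing over $x=a\in\mathbb{F}_q$ and using the convention $\chi(0)=0$, the number of affine $\mathbb{F}_q$-points of the curve $C\colon y^m=f(x)$ works out to
\[
\#C_{\mathrm{aff}}(\mathbb{F}_q)\;=\;q\;+\;\sum_{j=1}^{m-1}S_j,\qquad\text{where}\qquad S_j:=\sum_{a\in\mathbb{F}_q}\chi^j(f(a)),
\]
so that the sum $S_1$ we must bound is exactly the $j=1$ term.

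The degenerate case I would dispatch first: if $f=cg^m$, then $\chi(f(a))=\chi(c)\,\chi^m(g(a))=\chi(c)$ whenever $g(a)\neq0$ and $\chi(f(a))=0$ otherwise, so $\sum_a\chi(f(a))=(q-r)\chi(c)$ with $r$ the number of distinct roots of $g$ in $\mathbb{F}_q$ --- this is the second assertion of the theorem. Now assume $f$ is not a constant times an $m$-th power. The standard fact is that under this hypothesis $C$ is geometrically irreducible; passing to its smooth projective model $\widetilde C$ changes the count only by a bounded amount from points at infinity and desingularisation, so $\#\widetilde C(\mathbb{F}_q)=q+1+O(d)+\sum_{j=1}^{m-1}S_j$, and the genus $g(\widetilde C)$ is bounded in terms of $d$ and $m$ by Riemann--Hurwitz applied to the degree-$m$ cover $\widetilde C\to\mathbb{P}^1$ ramified over the roots of $f$ and possibly $\infty$.

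The genuinely substantive step is to isolate $S_1$ from the mixed sum $\sum_j S_j$, and here one uses the factorisation of the zeta function of $\widetilde C$ into Artin--Weil $L$-functions: $\zeta(\widetilde C,T)=\big((1-T)(1-qT)\big)^{-1}\prod_{j=1}^{m-1}L(\chi^j,T)$, where each $L(\chi^j,T)=\prod_i(1-\alpha_{j,i}T)$ is a polynomial of degree $d_j\le d-1$ determined by the ramification data (the exact value depends on the degree and leading coefficient of $f$ relative to $\chi^j$), and $-S_j=\sum_i\alpha_{j,i}$ by the standard translation of point counts into power sums of the inverse roots. Weil's Riemann Hypothesis for curves asserts $|\alpha_{j,i}|=\sqrt q$ for all $i,j$, which gives $|S_j|\le d_j\sqrt q\le(d-1)\sqrt q$, and specialising to $j=1$ is exactly the claimed estimate. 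The main obstacle is, unsurprisingly, this last input --- the Riemann Hypothesis for curves over finite fields --- which is precisely why the paper cites it rather than proving it; a self-contained alternative avoiding algebraic geometry would be Stepanov's polynomial method (in the Schmidt--Bombieri form), where one builds an auxiliary polynomial vanishing to high order along many points of $C$ over extensions $\mathbb{F}_{q^n}$ to get matching upper and lower bounds for $\#C(\mathbb{F}_{q^n})$, and then extracts $S_1$ by the same zeta-function bookkeeping.
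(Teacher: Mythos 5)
The paper does not prove Theorem~\ref{thm:Weil} at all: it is imported as a known result, with the remark ``The following result of Weil (see \cite[Thm 5.41]{LN})~\dots'' immediately preceding the statement. So there is no in-paper proof to compare against, and any correct proof you supply is additional content rather than an alternative route.

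Your sketch is indeed the standard way this theorem is established. The reduction of the degenerate case ($f=cg^m$) is correct and elementary. For the main estimate, the translation of the character sum into a point count on the Kummer curve $C:y^m=f(x)$, the passage to a smooth projective model, and the extraction of $S_1$ from the Artin--Weil $L$-function factorisation of the zeta function are exactly the textbook outline, and you correctly identify the one truly deep ingredient --- Weil's Riemann Hypothesis for curves over finite fields --- as the step you are taking as a black box. One imprecision worth flagging: the hypothesis ``$f$ is not of the form $cg^m$'' does \emph{not} in general imply that $C$ is geometrically irreducible when $m$ is composite (take $m=4$ and $f$ a perfect square that is not a fourth power; then $y^4-f$ factors as $(y^2-g)(y^2+g)$). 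What the hypothesis really guarantees is that the multiplicative character $\chi$, pulled back along $f$, corresponds to a nontrivial abelian character of the function field $\overline{\mathbb{F}}_q(x)$, which is what is needed for the Riemann Hypothesis bound on the associated $L$-polynomial $L(\chi,T)$. One can either argue directly with $L(\chi,T)$ without ever invoking irreducibility of the full cover, or decompose $C$ into its geometrically irreducible components and apply the RH to each. Modulo that technicality, and the fact that this is an outline rather than a proof (since it leaves the RH for curves, the genus computation via Riemann--Hurwitz, and the bookkeeping at infinity unverified), the argument is sound and goes beyond what the paper provides.
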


In further sections we will be interested in one particular type of character. If $G$ is a finite cyclic group then its \emph{quadratic character} $\eta_G$ is defined as
\begin{equation*}
\eta_G(a) = \absw{ 1 & \text{ if } \exists b\in G: b^2 = a \\ -1 & \text{ otherwise} }.
\end{equation*}
$\eta_G$ is indeed a character of $G$ and is of order $1$ or $2$, depending on whether $\anz{G}$ is odd or even. As the multiplicative group of any finite field $\zb F$ is cyclic, there is also an associated quadratic character $\eta_\zb F$. Usually we extend $\eta_\zb F$ to the whole field by setting $\eta_\zb F(0)=0$.  Among others, it can be used to express the number of roots of a quadratic polynomial.

\begin{lemma} \label{lemma:rootsQuadratic}
Let $\zb F$ be a finite field with $\Char(\zb F)\neq 2$ and $p\in \zb F[X]$ a quadratic polynomial with discriminant $D\in \zb F$. Then $p$ has $1+\eta_\zb F(D)$ distinct roots in $\zb F$.
\end{lemma}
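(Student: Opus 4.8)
The plan is to reduce the statement to the elementary fact that, in a field of characteristic different from $2$, a nonzero element has exactly two distinct square roots if it is a square and none otherwise, while $0$ has exactly one. Write $p(X)=aX^2+bX+c$ with $a\in\zb F^*$, so that $D=b^2-4ac$. Since $\Char(\zb F)\neq 2$, the element $2a$ is invertible, and completing the square gives
\begin{equation*}
p(X)=a\kl{X+\frac{b}{2a}}^2-\frac{D}{4a}=a\kl{\kl{X+\frac{b}{2a}}^2-\frac{D}{4a^2}}.
\end{equation*}
Hence $X_0\in\zb F$ is a root of $p$ if and only if $Y_0:=X_0+\frac{b}{2a}$ satisfies $Y_0^2=\frac{D}{4a^2}$, and since $X\mapsto X+\frac{b}{2a}$ is a bijection of $\zb F$, the number of distinct roots of $p$ equals the number of square roots of $D/(4a^2)$ in $\zb F$.

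Next I would note that $4a^2=(2a)^2$ is a nonzero square in $\zb F$, so by multiplicativity of the quadratic character, $\eta_\zb F\kl{D/(4a^2)}=\eta_\zb F(D)\cdot\eta_\zb F\kl{(2a)^{-2}}=\eta_\zb F(D)$. Thus it suffices to count the square roots of an element $E$ with $\eta_\zb F(E)=\eta_\zb F(D)$ and to check that this count is $1+\eta_\zb F(E)$. If $E=0$ (equivalently $D=0$), then $Y_0^2=0$ forces $Y_0=0$, so $p$ has exactly one root, matching $1+\eta_\zb F(0)=1$ under the convention $\eta_\zb F(0)=0$. If $E\neq 0$ is a square, choose a square root $s\neq 0$; then $Y_0^2=E$ iff $(Y_0-s)(Y_0+s)=0$, and $s\neq -s$ because $\Char(\zb F)\neq 2$, so there are exactly two distinct roots, matching $1+\eta_\zb F(E)=2$. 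If $E\neq 0$ is a non-square, there is no $Y_0$ with $Y_0^2=E$, so $p$ has no root, matching $1+\eta_\zb F(E)=0$.

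Combining the three cases yields that $p$ has exactly $1+\eta_\zb F(D)$ distinct roots in $\zb F$, as claimed. There is no genuine obstacle here: the only places the hypothesis $\Char(\zb F)\neq 2$ is needed are the invertibility of $2a$ (to complete the square and to make sense of the discriminant) and the distinctness $s\neq -s$ in the split case, and the only convention invoked is $\eta_\zb F(0)=0$.
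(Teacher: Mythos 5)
Your proof is correct and complete. The paper itself states Lemma~\ref{lemma:rootsQuadratic} without proof, treating it as a standard fact about finite fields (the surrounding appendix material refers the reader to Lidl--Niederreiter for details); your completing-the-square argument, reduction via the multiplicativity of $\eta_{\zb F}$ on the nonzero square $(2a)^2$, and the three-way case analysis on $D/(4a^2)$ is exactly the standard derivation and handles all cases, including $D=0$ under the convention $\eta_{\zb F}(0)=0$, so there is nothing to compare against or correct.
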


\subsection{Proof of Theorem~\ref{thm:main}(b)}

\begin{claim}\label{coeffclaim}
For $c_1,c_2\in \mathbb{F}_q^*$ the polynomial 
\begin{equation*}
L_{c_1,c_2}(b)=b^4+2(c_1-c_2-1)b^3+\big((1+c_1-c_2)^2-6c_1\big)b^2+2c_1(1-c_1-c_2)b+c_1^2
\end{equation*}
is of the form $(b^2+\alpha_1b+\alpha_0)^2$ for some $\alpha_1,\alpha_0\in \mathbb{F}_0$ if and only if $(c_1,c_2)=(1,-1)$, in which case we have $(\alpha_1,\alpha_0)=(1,1)$.
\end{claim}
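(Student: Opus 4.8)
The plan is a straightforward comparison of coefficients, with a small amount of case analysis. Expanding,
\[
(b^2+\alpha_1 b+\alpha_0)^2 = b^4 + 2\alpha_1 b^3 + (\alpha_1^2+2\alpha_0)\,b^2 + 2\alpha_1\alpha_0\, b + \alpha_0^2 ,
\]
one sees that the identity $L_{c_1,c_2}(b)=(b^2+\alpha_1b+\alpha_0)^2$ is equivalent to the solvability of the system
\[
\alpha_1 = c_1-c_2-1,\qquad \alpha_0^2 = c_1^2,
\]
\[
\alpha_1\alpha_0 = c_1(1-c_1-c_2),\qquad \alpha_1^2+2\alpha_0 = (1+c_1-c_2)^2-6c_1
\]
for $\alpha_1,\alpha_0\in\mathbb{F}_q$. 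Here I use that $q$ is odd, so that halving the $b^3$- and $b$-coefficients is harmless; in characteristic $2$ the statement would in fact be false, since then $L_{c_1,c_2}$ collapses to $(b^2+(1+c_1+c_2)b+c_1)^2$ for \emph{every} $c_1,c_2$.

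First I would read off $\alpha_1 = c_1-c_2-1$ from the cubic coefficient, and use $\alpha_0^2=c_1^2$ together with $c_1\ne 0$ to write $\alpha_0=\varepsilon c_1$ for some $\varepsilon\in\{+1,-1\}$. Substituting into $\alpha_1\alpha_0 = c_1(1-c_1-c_2)$ and cancelling $c_1$ gives $\varepsilon\alpha_1 = 1-c_1-c_2$. Comparing with $\alpha_1=c_1-c_2-1$: the choice $\varepsilon=-1$ would force $-(1-c_1-c_2)=c_1-c_2-1$, i.e. $2c_2=0$, impossible for $c_2\in\mathbb{F}_q^*$ since $q$ is odd; hence $\varepsilon=+1$, so $\alpha_0=c_1$ and $1-c_1-c_2 = c_1-c_2-1$, which yields $c_1=1$ and then $\alpha_1=-c_2$.

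It then remains to impose the quadratic-coefficient equation. With $c_1=1$, $\alpha_0=1$, $\alpha_1=-c_2$ it reads $c_2^2+2 = (2-c_2)^2-6 = c_2^2-4c_2-2$, i.e. $c_2=-1$; hence necessarily $(c_1,c_2)=(1,-1)$ and $(\alpha_1,\alpha_0)=(1,1)$. For the converse, one just checks that substituting $(c_1,c_2)=(1,-1)$ makes all four equations hold with $(\alpha_1,\alpha_0)=(1,1)$, equivalently $L_{1,-1}(b)=(b^2+b+1)^2$ by a one-line expansion. The whole argument is mechanical; the only points needing genuine attention are the sign bookkeeping in the $\alpha_0=\pm c_1$ split and the (implicit) hypothesis $\Char(\mathbb{F}_q)\ne 2$, which as noted above is really needed.
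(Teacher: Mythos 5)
Your proof is correct and takes essentially the same approach as the paper: expand $(b^2+\alpha_1 b+\alpha_0)^2$, compare coefficients, split on $\alpha_0=\pm c_1$, rule out the minus sign (giving $c_2=0$), then solve to get $(c_1,c_2)=(1,-1)$. Your added remark that the statement genuinely fails in characteristic $2$ is accurate and a useful observation, though the paper operates under the standing hypothesis that $q$ is odd in this part of the argument.
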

\begin{proof}
First suppose $(c_1,c_2)=(1,-1)$. Then $L_{1,-1}(b)=b^4+2b^3+3b^2+2b+1=(b^2+b+1)^2$.

For the other direction suppose 
\begin{equation*}
L_{c_1,c_2}(b)=(b^2+\alpha_1b+\alpha_0)^2=b^4+2\alpha_1b^3+(\alpha_1^2+2\alpha_0)+2\alpha_1\alpha_1b+\alpha_0^2
\end{equation*}
for some $\alpha_1,\alpha_0\in \mathbb{F}_0$. By comparing coefficients we arrive at the system
\begin{align}
2(c_1-c_1-1)&=2\alpha_1\label{compare11}\\
(1+c_1-c_2)^2-6c_1&=\alpha_1^2+2\alpha_0\label{compare12}\\
2c_1(1-c_1-c_2)&=2\alpha_1\alpha_0\label{compare13}\\
c_1^2&=\alpha_0^2. \label{compare14}
\end{align}
(\ref{compare14}) just means that we either have $\alpha_0=c_1$ or $\alpha_{0}=-c_1$.

\textbf{Case $\alpha_0=c_1$}: After substitution, using (\ref{compare11}) and (\ref{compare13}) we obtain
\begin{equation*}
c_1-c_2-1=\alpha_1=1-c_1-c_2,
\end{equation*}
and accordingly $\alpha_0=c_1=1$ and $\alpha_1=-c_2$. Substituting all this into (\ref{compare12}) we obtain
\begin{equation*}
(2-c_2)^2-6=c_2^2+2,
\end{equation*}
and accordingly $c_2=-1$ and $\alpha_1=1$.

\textbf{Case $\alpha_0=-c_1$}: After substitution, using (\ref{compare11}) and (\ref{compare13}) we obtain
\begin{equation*}
c_1-c_2-1=\alpha_1=-1+c_1+c_2,
\end{equation*}
and accordingly $c_2=0$, which is impossible. 
\end{proof}

\subsection{Proof of \cref{lemma:SpecialQuadruples}}

\begin{claim}\label{linear_eqs}
Let $q$ be a prime power and $A\neq B\in \fqq{3}$ such that $N(A)=N(B)=1$. Then
$X_1=X_1(A,B)=\displaystyle\frac{-A^{q+1}B+AB^{q+1}}{A^{q+1}-B^{q+1}}$ and $X_2=X_2(A,B)=\displaystyle\frac{A^{q+1}-B^{q+1}}{-A^q+B^q}$ are solutions to (\ref{f1})
 and (\ref{f4})
 respectively.
\end{claim}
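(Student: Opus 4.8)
The plan is to exploit the fact that $X_1$ and $X_2$ are, by their very construction, the unique roots of the linear polynomials (\ref{f1*}) and (\ref{f4*}), which were extracted from the systems (\ref{f1}) and (\ref{f4}) by eliminating $Y^q$ between the two equations. Since that elimination only produces a necessary condition, the one thing left to do is to check that at $Y=X_1$ (respectively $Y=X_2$) the common value of the two competing expressions for ``$Y^q$'' is in fact $X_1^q$ (respectively $X_2^q$). Writing $h(Y,Z)=Y^{q+1}+Y^qZ+Z^{q+1}$ as in the proof of \Cref{lemma:SpecialQuadruples}, this amounts to verifying the two identities
\[
X_1^q\,(X_1+A)=-A^{q+1}\qquad\text{and}\qquad X_2\,(X_2^q+A^q)=-A^{q+1},
\]
which are exactly $h(X_1,A)=0$ and $h(A,X_2)=0$. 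The companion equations $h(X_1,B)=0$ and $h(B,X_2)=0$ then come for free from the symmetries $X_1(A,B)=X_1(B,A)$ and $X_2(A,B)=X_2(B,A)$, both visible from the explicit formulas by simultaneously negating numerator and denominator.

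For the two displayed identities I would run a direct computation whose only inputs are the hypotheses $N(A)=A^{1+q+q^2}=1$ and $N(B)=B^{1+q+q^2}=1$, conveniently rewritten as $A^{q^2+q}=A^{-1}$, $A^{q^2}=A^{-(q+1)}$ and similarly for $B$. Applying the Frobenius $x\mapsto x^q$ to the fraction defining $X_1$ and feeding in these relations, I expect to arrive (after clearing denominators) at the closed form $X_1^q=(B^{q+1}-A^{q+1})/(A-B)$; separately, the definition of $X_1$ gives $X_1+A=A^{q+1}(A-B)/(A^{q+1}-B^{q+1})$, whence $-A^{q+1}/(X_1+A)$ is the same expression, so the first identity holds. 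The second is the same kind of manipulation: Frobenius applied to $X_2$ together with the norm relations should give $X_2^q=A^qB^q(B-A)/(A^{q+1}-B^{q+1})$, while $-A^q-A^{q+1}/X_2$ collapses to the very same thing, and rearranging yields $X_2\,(X_2^q+A^q)=-A^{q+1}$. I would also record, as part of this step, that the relevant denominators are nonzero: $A\neq B$ forces $A^q\neq B^q$ because the Frobenius is injective, and (in the situation of \Cref{lemma:SpecialQuadruples}, where additionally $A/B\notin\fq$) the case $A=-B$ is excluded, so that $A^{q+1}\neq B^{q+1}$ — since $A^{q+1}=B^{q+1}$ would give $A^2=B^2$ after raising to the power $q^2-q+1$ and using $A^{q^3}=A$ — and hence also $X_1+A\neq 0$ and $X_2\neq 0$.

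I do not expect a genuine obstacle here; it is essentially bookkeeping. The one subtlety to keep in mind is that being a root of the eliminated linear equation is a priori weaker than solving the original system, which is precisely why the back-substitution above is necessary — and also why the degenerate configurations $A=\pm B$ have to be ruled out (the hypothesis $A\neq B$ takes care of the denominator of $X_2$, and the extra hypothesis $A/B\notin\fq$ available in \Cref{lemma:SpecialQuadruples} handles the remaining case). The symmetry remark is what keeps the amount of computation down to roughly half.
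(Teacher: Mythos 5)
Your proposal is correct, and for $X_1$ it is essentially the paper's argument: expand $h(X_1,A)=X_1^q(X_1+A)+A^{q+1}$, simplify with $A^{q^2+q+1}=B^{q^2+q+1}=1$, conclude $h(X_1,A)=0$, and then get $h(X_1,B)=0$ for free from the symmetry $X_1(A,B)=X_1(B,A)$. Your organization (first deriving the clean closed forms $X_1^q=(B^{q+1}-A^{q+1})/(A-B)$ and $X_1+A=A^{q+1}(A-B)/(A^{q+1}-B^{q+1})$) is a tidier way to run the same computation the paper does by brute-force denominator clearing; all the formulas you record are correct. Where you genuinely diverge is $X_2$: you propose a parallel Frobenius computation giving $X_2^q=A^qB^q(B-A)/(A^{q+1}-B^{q+1})$ and hence $h(A,X_2)=0$ directly, whereas the paper notices the identity $X_2=AB/X_1$ and exploits the homogeneity of $h$ to write $h(A,X_2)=\tfrac{A^{q+1}}{X_1^{q+1}}h(X_1,B)$ and $h(B,X_2)=\tfrac{B^{q+1}}{X_1^{q+1}}h(X_1,A)$, so the $X_2$ case reduces to the already finished $X_1$ case with no new computation. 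Your route is slightly longer but more symmetric; the paper's is a shortcut. Both are sound. One small remark: you do not need to import $A/B\notin\fq$ from \Cref{lemma:SpecialQuadruples} to rule out $A=-B$ (and hence $A^{q+1}=B^{q+1}$); the hypotheses $N(A)=N(B)=1$ and $A\neq B$ already exclude it, since $q^2+q+1$ is odd so $N(-1)=-1$ in odd characteristic, while in characteristic $2$ the equality $A=-B$ is just $A=B$.
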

\begin{proof} For $X_1$ first consider
\begin{align*}
h(X_1,A)&=X_1^{q+1}+X_1^qA+A^{q+1}=X_1^q(X_1+A)+A^{q+1}\\
&=\left(\frac{-A^{q+1}B+AB^{q+1}}{A^{q+1}-B^{q+1}}\right)^{q}\left(\frac{-A^{q+1}B+AB^{q+1}}{A^{q+1}-B^{q+1}}+A\right)+A^{q+1}\\
&=\left(\frac{-A^{q^2+q}B^q+A^qB^{q^2+q}}{A^{q^2+q}-B^{q^2+q}}\right)\left(\frac{-A^{q+1}B+AB^{q+1}+A^{q+2}-AB^{q+1}}{A^{q+1}-B^{q+1}}\right)+A^{q+1}\\
&=\frac{\big(-A^{q^2+q}B^q+A^qB^{q^2+q}\big)A^{q+1}\big(A-B)}{\big(A^{q^2+q}-B^{q^2+q}\big)\big(A^{q+1}-B^{q+1}\big)}+A^{q+1}.
\end{align*}
Putting $\displaystyle m(A,B)=\frac{A^{q+1}}{\big(A^{q^2+q}-B^{q^2+q}\big)\big(A^{q+1}-B^{q+1}\big)}$ we get
\begin{align*}
h(X_1,A)=&m(A,B)\Big(\big(-A^{q^2+q}B^q+A^qB^{q^2+q}\big)\big(A-B)+\big(A^{q^2+q}-B^{q^2+q}\big)\big(A^{q+1}-B^{q+1}\big)\Big)\\
=&m(A,B)\Big(-A^{q^2+q+1}B^q+A^{q^2+q}B^{q+1}+A^{q+1}B^{q^2+q}-A^qB^{q^2+q+1}\\
&+A^{q^2+q+1}A^q-A^{q^2+q}B^{q+1}-A^{q+1}B^{q^2+q}+B^{q^2+q+1}B^q\Big)
\end{align*}
and so using $N(A)=A^{q^2+q+1}=1$ and $N(B)=B^{q^2+q+1}=1$ we have
\begin{align*}
h(X_1,A)=&m(A,B)\Big(-B^{q}+A^{q^2+q}B^{q+1}+A^{q+1}B^{q^2+q}-A^q+A^q-A^{q^2+q}B^{q+1}\\
&-A^{q+1}B^{q^2+q}+B^q\Big)=m(A,B)\cdot 0=0.
\end{align*}
By switching the roles of $A$ and $B$ we also have that $h(X_1(B,A),B)=0$. However $X_1(B,A)=X_1(A,B)$ and so
\begin{equation*}
h(X_1,B)=h(X_1(A,B),B)=h(X_1(B,A),B)=0,
\end{equation*}
which means that $X_1$ is in fact a solution to (\ref{f1}).

\noindent For $X_2$ first note that 
\begin{equation*}
X_2=\frac{A^{q+1}-B^{q+1}}{-A^q+B^q}=AB \frac{A^{q+1}-B^{q+1}}{-A^{q+1}B+AB^{q+1}}=\frac{AB}{X_1}.
\end{equation*}
However then
\begin{align*}
h(A,X_2)=h(A,\frac{AB}{X_1})=A^{q+1}+A^q\frac{AB}{X_1}+\left(\frac{AB}{X_1}\right)^{q+1}
=\frac{A^{q+1}}{X_1^{q+1}}h(X_1,B)=0
\end{align*}
and
\begin{align*}
h(B,X_2)=h(B,\frac{AB}{X_1})=B^{q+1}+B^q\frac{AB}{X_1}+\left(\frac{AB}{X_1}\right)^{q+1}
=\frac{B^{q+1}}{X_1^{q+1}}h(X_1,A)=0
\end{align*}
which means that $X_2$ is really a solution to (\ref{f4})
\end{proof}

\begin{claim}\label{quadratic_eqs}
Let $q$ be an odd prime power and $A,B\in \fqq{3}$ such that $N(A)=N(B)=1$. Then 
all $\fqq{3}$-solutions of (\ref{f2*})
and (\ref{f3*})
are solutions to (\ref{f2})
 and (\ref{f3}),
 respectively.
\end{claim}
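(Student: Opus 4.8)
The plan is to prove the statement for (\ref{f2*})/(\ref{f2}); the assertion for (\ref{f3*})/(\ref{f3}) then follows verbatim by interchanging $A$ and $B$, which sends (\ref{f2}) to (\ref{f3}) and (\ref{f2*}) to (\ref{f3*}) and preserves the hypothesis $N(A)=N(B)=1$. Write $P(Y)$ for the quadratic polynomial appearing in (\ref{f2*}) and set $M(Y):=\tfrac1A\bigl(B^qY+B^{q+1}-A^{q+1}\bigr)$. A direct check of the identities that produced (\ref{f2*}) in the first place gives
\begin{equation*}
h(Y,A)-h(B,Y)=A\bigl(Y^{q}-M(Y)\bigr)\qquad\text{and}\qquad h(Y,A)\big|_{Y^{q}\mapsto M(Y)}=\tfrac1A\,P(Y),
\end{equation*}
where the second identity means that one replaces $Y^{q}$ by $M(Y)$ in $h(Y,A)=Y^{q}(Y+A)+A^{q+1}$. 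Hence, if $Y_0\in\fqq3$ is a root of $P$ and one can show $Y_0^{\,q}=M(Y_0)$, then $h(Y_0,A)=\tfrac1A P(Y_0)=0$, and the first identity forces $h(B,Y_0)=h(Y_0,A)=0$ as well, so $Y_0$ solves (\ref{f2}). Since neither $0$ nor $-A$ is a root of $P$ (as $P(0)=AB^{q+1}$ and $P(-A)=A^{q+2}$ are nonzero), the whole claim thus reduces to the single point: \emph{every root $Y_0\in\fqq3$ of $P$ satisfies $Y_0^{\,q}=M(Y_0)$.}

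To prove this I would bring in the Frobenius twists of $P$ and $M$. First, two structural facts, each a finite-field computation in which $N(A)=N(B)=1$ is used repeatedly (typically in the form $A^{q^2+q}=A^{-1}$, $B^{q^2+q+1}=1$): \emph{(i)} the affine map $M$ sends the root set of $P$ into the root set of the Frobenius twist $P^{(q)}$ — equivalently, $M^{(q)}\!\bigl(M(r)\bigr)\bigl(M(r)+A^{q}\bigr)=-A^{q^2+q}$ whenever $M(r)(r+A)=-A^{q+1}$ — and, applying the $q$- and $q^2$-power Frobenius, $M^{(q)}$ sends roots of $P^{(q)}$ to roots of $P^{(q^2)}$ and $M^{(q^2)}$ sends roots of $P^{(q^2)}$ to roots of $P^{(q^3)}=P$; \emph{(ii)} the composite affine map $M^{(q^2)}\circ M^{(q)}\circ M$ is the identity, since its linear coefficient equals $N(B)/N(A)=1$ and a short computation collapses its constant term to $\bigl(N(B)-N(A)\bigr)\cdot(\cdots)=0$. (For sanity one may also note the weaker consequence that, by Vieta and $N(A)=N(B)=1$, the two roots $r_1,r_2$ of $P$ satisfy $r_1^{q}r_2^{q}=(AB)^{q}=M(r_1)M(r_2)$ and $r_1^{q}+r_2^{q}=M(r_1)+M(r_2)$, i.e. $\{r_1^{q},r_2^{q}\}=\{M(r_1),M(r_2)\}$ as multisets.)

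Now fix an $\fqq3$-root $Y_0=r_1$ of $P$; then the second root $r_2=AB/r_1$ is also in $\fqq3$, and by \emph{(i)} both the Frobenius $F\colon x\mapsto x^{q}$ and $M$ are bijections from $\{r_1,r_2\}$ onto the root set of $P^{(q)}$. Thus $\pi:=M^{-1}\circ F$ is a permutation of $\{r_1,r_2\}$, so either $\pi=\mathrm{id}$ (in which case $Y_0^{\,q}=M(Y_0)$, done) or $\pi$ is the transposition. In the latter case, using that the transposition of a two-element set commutes through any bijection of two-element sets, one pushes all the twisted transpositions past the $M^{(q^i)}$ and computes $F^{3}=\bigl(M^{(q^2)}\circ M^{(q)}\circ M\bigr)\circ\pi=\pi$ on $\{r_1,r_2\}$; but $F^{3}=\mathrm{id}$ there because $r_1,r_2\in\fqq3$, so $\pi=\mathrm{id}$ after all — a contradiction. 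Hence $Y_0^{\,q}=M(Y_0)$ for every $\fqq3$-root of $P$, which completes the proof. I expect the routine but somewhat lengthy verifications of (i) and (ii) — essentially the ``somewhat longer but still straightforward calculation'' mentioned before the claim — to be the main labor, the one genuinely delicate step being the little permutation argument that converts $F^{3}=\mathrm{id}$ into $\pi=\mathrm{id}$.
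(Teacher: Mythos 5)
Your proof is correct, but for the central step it takes a genuinely different route from the paper's. Both arguments first reduce the claim to the single Frobenius relation $Y_0^q=M(Y_0)$ for each $\fqq{3}$-root $Y_0$ of the quadratic $P$ in (\ref{f2*}), where $M(Y)=\tfrac{1}{A}\bigl(B^qY+B^{q+1}-A^{q+1}\bigr)$; your two identities then give $h(Y_0,A)=\tfrac1A P(Y_0)=0$ and $h(B,Y_0)=h(Y_0,A)$, which is exactly how the paper finishes once it has derived its display (\ref{c1q}), and then the (\ref{f3*})/(\ref{f3}) case follows by swapping $A$ and $B$ in both. The divergence is in proving that Frobenius relation. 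The paper writes $C_1$ via the quadratic formula $C_1=\tfrac{2A^{q+1}-h(B,A)+G_1}{2B^q}$ with $G_1^2=D_1$, computes the twists $h(B,A)^q=\tfrac{B^{q^2}}{A}h(B,A)$ and $D_1^q=\tfrac{B^{2q^2}}{A^2}D_1$, and then resolves the sign in $G_1^q=\pm\tfrac{B^{q^2}}{A}G_1$ by the norm observation $N(G_1^{q-1})=1$. You avoid the square root and the sign entirely: using that $M$ carries roots of $P$ to roots of $P^{(q)}$ --- and here your parenthetical Vieta computation $\{r_1^q,r_2^q\}=\{M(r_1),M(r_2)\}$ is actually all you need, and is lighter than verifying (i) for arbitrary arguments --- together with $M^{(q^2)}\circ M^{(q)}\circ M=\mathrm{id}$, you deduce that $M^{-1}\circ F$ permutes the two-element root set, and the permutation bookkeeping forces it to be the identity since $F^3=\mathrm{id}$ on $\fqq3$ while otherwise $F^3$ would equal the transposition. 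I checked (i) and (ii): both hold under $N(A)=N(B)=1$; the constant term of the composite in (ii) does vanish, though I did not see it collapse literally to $(N(B)-N(A))\cdot(\cdots)$ --- a cosmetic point. Overall the two approaches carry comparable computational weight: yours trades the explicit sign bookkeeping of the square root for the composite identity (ii) plus a clean group-theoretic observation, and it handles the double-root case $D_1=0$ uniformly (the paper, by contrast, leans on the lemma's context to assert $D_1\neq0$, which is not actually part of the claim's hypotheses).
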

\begin{proof}
First note that it is enough to prove that the $\fqq{3}$-solutions of (\ref{f2*})
are solutions to (\ref{f2})
as by swithching the roles of $A$ and $B$ (\ref{f2*})
transforms to (\ref{f3*})
and (\ref{f2})
to (\ref{f3})
. We start the proof by deriving, using $N(A)=A^{q^2+q+1}=1$ and $N(B)=B^{q^2+q+1}=1$, some useful identities.
\begin{align}\label{habq}
h(A,B)^q&=\big(A^{q+1}+A^qB+B^{q+1}\big)^q=A^{q^2+q}+A^{q^2}B^q+B^{q^2+q}\nonumber \\
&=A^{q^2+q}+A^{q^2}B^q+A^{q^2+q+1}B^{q^2+q}=\frac{A^{q^2}}{B}\big(A^qB+B^{q+1}+A^{q+1}B^{q^2+q+1}\big) \\
&=\frac{A^{q^2}}{B}\big(A^{q+1}+A^qB+B^{q+1}\big)=\frac{A^{q^2}}{B}h(A,B) \nonumber
\end{align}
By switching the roles of $A$ and $B$ we also obtain
\begin{equation}\label{hbaq}
h(B,A)^q=\frac{B^{q^2}}{A}h(B,A).
\end{equation}
Then for $D_1=D_1(A,B)$, using again the norm conditions together with (\ref{habq}) and (\ref{hbaq}), we have
\begin{align}\label{d1q}
D_1^q&=\big(h(B,A)^2-4 B^q A ~ h(A,B)\big)^q=\big(h(B,A)^q\big)^2-4^qB^{q^2}A^qh(A,B)^q\nonumber \\
&=\left(\frac{B^{q^2}}{A}h(B,A)\right)^2-4B^{q^2}A^q\frac{A^{q^2}}{B}h(A,B)=\frac{B^{2q^2}}{A^2}\Big(h(B,A)^2-4\frac{A^{q^2+q+1}A}{B^{q^2+1}}h(A,B)\Big) \\
&=\frac{B^{2q^2}}{A^2}\Big(h(B,A)^2-4\frac{AB^{q^2+q+1}}{B^{q^2+1}}h(A,B)\Big)=\frac{B^{2q^2}}{A^2}\big(h(B,A)^2-4 B^q A ~ h(A,B)\big)=\frac{B^{2q^2}}{A^2}D_1 \nonumber.
\end{align}
Now let $C_1\in \fqq{3}$ %and $C_2$ 
be a solution to (\ref{f2*})
. %and to (14$^*$)
%(\ref{f3*})
%respectively. 
We intend to show that $C_1$ is also a solution to (\ref{f2})
, i.e. $h(C_1,A)=h(B,C_1)=0$. % and that $C_2$ is also a solution to (14)
%(\ref{f2})
%, i.e. $h(A,C_2)=h(C_2,B)=0$. 
Clearly $C_1$ %and $C_2$ 
can be written as
\begin{equation*}
C_1=\frac{2 A^{q+1}-h(B,A)+ G_1}{2\cdot B^q}
%\text{ and }C_2=\frac{2 B^{q+1}-h(A,B)+ G_2}{2\cdot A^q},
\end{equation*}
where $G_1%,G_2
\in \fqq{3}^*$ %are 
is such that  $G_1^2=D_1$. % and $G_2^2=D_2$. 
Note that $D_1\not=0$ because exactly one of $h(A,B)$ and $h(B,A)$ is 0,
hence $G_1\not=0$. Next we want to express $G_1^q$ %and $G_2^q$ 
in terms of $A$ and $B$.

Using (\ref{d1q}) we have
\begin{align*}
\left(\frac{G_1^q}{G_1}\right)^2=\frac{\big(G_1^2\big)^q}{G_1^2}=\frac{D_1^q}{D_1}=\frac{\frac{B^{2q^2}}{A^2}D_1}{D_1}=\frac{B^{2q^2}}{A^2}\text{ and hence }\frac{G_1^q}{G_1}=\pm\frac{B^{q^2}}{A}.
\end{align*}
However as $G_1\in \fqq{3}^*$ we have $N\left(\frac{G_1^q}{G_1}\right)=N(G_1^{q-1})=N(1)=1$ which excludes $\frac{G_1^q}{G_1}=-\frac{B^{q^2}}{A}$ as in this case by $N(A)=N(B)=1$ we would have $N\left(\frac{G_1^q}{G_1}\right)=N\left(-\frac{B^{q^2}}{A}\right)=-\frac{N(B)^{q^2}}{N(A)}=-1$. Accordingly
\begin{equation}\label{g1q}
G_1^q=\frac{B^{q^2}}{A}G_1.
\end{equation}
Then using the norm conditions together with (\ref{hbaq}) and (\ref{g1q}) we get
\begin{align}\label{c1q}
\displaystyle C_1^q&=\left(\frac{2 A^{q+1}-h(B,A)+ G_1}{2\cdot B^q}\right)^q=\frac{2^q A^{q^2+q}-h(B,A)^q+ G_1^q}{2^q\cdot B^{q^q}}\nonumber\\
&=\frac{2 A^{q^2+q}-\frac{B^{q^2}}{A}h(B,A)+ \frac{B^{q^2}}{A}G_1}{2\cdot B^{q^2}}=\frac{2 \frac{A^{q^2+q+1}B^{2q+1}}{A}-\frac{B^{q^2+q+1}B^q}{A}h(B,A)+ \frac{B^{q^2+q+1}B^q}{A}G_1}{2\cdot B^{q^2+q+1}B^q}\nonumber\\
&=\frac{B^q}{A}\left(\frac{2 A^{q+1}-h(B,A)+ G_1}{2\cdot B^q}\right)+\frac{B^{q+1}-A^{q+1}}{A}=\frac{B^q}{A}C_1+\frac{B^{q+1}-A^{q+1}}{A}.
\end{align}
Now we are ready to do the final steps, namely to substitute $C_1$ %and $C_2$ 
into the respective polynomials.

Using (\ref{c1q}) and the fact that $C_1$ is a root of (\ref{f2*})
we get
\begin{align*}
h(C_1,A)&=C_1^{q+1}+C_1^qA+A^{q+1}=C_1^q(C_1+A)+A^{q+1}\\
&=\left(\frac{B^q}{A}C_1+\frac{B^{q+1}-A^{q+1}}{A}\right)(C_1+A)+A^{q+1}\\
&=\frac{1}{A}\left(B^qC_1^2+(AB^q+B^{q+1}-A^{q+1})C_1+AB^{q+1}\right)=\frac{1}{A}\cdot 0=0
\end{align*}
and
\begin{align*}
h(B,C_1)&=B^{q+1}+B^qC_1+C_1^{q+1}=B^{q+1}+C_1(B^q+C_1^{q})\\
&=B^{q+1}+C_1\left(B^q+\frac{B^q}{A}C_1+\frac{B^{q+1}-A^{q+1}}{A}\right)\\
&=\frac{1}{A}\left(B^qC_1^2+(AB^q+B^{q+1}-A^{q+1})C_1+AB^{q+1}\right)=\frac{1}{A}\cdot 0=0,
\end{align*}
hence $C_1$ is indeed a root of (13)

\end{proof}


\begin{thebibliography}{1}

\bibitem{AMY} N. Alon, S. Moran, A. Yehudayoff. Sign Rank, VC Dimension and
Spectral Gaps. In: V. Feldman, A. Rakhlin, O. Shamir (eds.), Proceedings of COLT'16, Proceedings of Machine Learning Research vol. 49, PMLR, 2016. pp. 47--80. Also: Mathematicheskii Sbornik 208:4--41, 2017. 

\bibitem{AKS1} N. Alon, M. Krivelevich, B. Sudakov. MaxCut in $H$-Free Graphs. Combinatorics, Probability and Computing 14:629--647, 2005.

\bibitem{AKS2} N. Alon, M. Krivelevich, B. Sudakov. Tur\'an numbers of bipartite graphs and related Ramsey-type questions. Combinatorics, Probability and Computing 12:477--494, 2003.

\bibitem{AP} N. Alon, P. Pudl\'ak. Constructive lower bounds for off-diagonal Ramsey numbers. Israel Journal of Mathematics 122(1):243--251, 2001.

\bibitem{ARSz} N. Alon, L. R\'onyai, T. Szab\'o. Norm-graphs: variations and applications. Journal of Combinatorial Theory, Series B 76:280--290, 1999.

\bibitem{AR} N. Alon, V. R\"odl. Sharp bounds for some multicolor Ramsey numbers. Combinatorica 25(2):125--141, 2005.

\bibitem{ASh} N. Alon, C. Shikhelman. Many T-copies in H-free graphs. Journal of Combinatorial Theory, Series B 121:146--172, 2016.

\bibitem{ASp} N. Alon, J.H. Spencer. The Probabilistic Method. Fourth Edition, Wiley, 2016.

\bibitem{BGW} L. Babai, A. G\'al, A. Widgerson. Superpolynomial lower bounds for monotone span programs. Combinatorica 19(3):301--319, 1999.

\bibitem{BGKRSzW} L. Babai, A. G\'al, J. Koll\'ar, L. R\'onyai, T. Szab\'o, A. Widgerson. Extremal bipartite graphs and superpolynomial lower bounds for monotone span programs. In: G.L. Miller (ed.), Proceedings of STOC'96, ACM, 1996. pp. 603--611.

\bibitem{BP1} S. Ball, V. Pepe. Asymptotic Improvements to the Lower Bound of Certain
Bipartite Tur\'an Numbers. Combinatorics, Probability and Computing 21:323--329, 2012.

\bibitem{BP2} S. Ball, V. Pepe. Forbidden subgraphs in the norm graph. Discrete Mathematics 339(4):1206--1211, 2016.

\bibitem{BS} J. Balogh, W. Samotij. The number of $K_{s,t}$-free graphs. Journal of the London Mathematical Society 83(2):368--388, 2011.

\bibitem{Be} C.T. Benson. Minimal regular graphs of girths eight and twelve. Canadian Journal of Mathematics 18:1091--1094, 1966.

\bibitem{BBK} P. Blagojevi\'c, B. Bukh, R. Karasev. Tur\'an numbers for $K_{s,t}$-free graphs: topological obstructions and algebraic constructions. Israel Journal of Mathematics 197(1):199--214, 2013.

\bibitem{Bo} B. Bollob\'as. Extremal Graph Theory. Academic Press, 1978.

\bibitem{Br} W.G. Brown. On graphs that do not contain a Thomsen graph. Canadian Mathematical Bulletin 9:281--285, 1966.

\bibitem{Bu} B. Bukh. Random algebraic construction of extremal graphs. Bullettin of the London Mathematical Society. 47(6):939--945, 2015.

\bibitem{CG1} F.R.K Chung, R.L. Graham. Erd\H os on Graphs. A K Peters, Wellesley, 1998.

\bibitem{CG2} F.R.K. Chung, R.L. Graham. Sparse quasi-random graphs. Combinatorica 22:217--244, 2002.

\bibitem{CGW} F.R.K. Chung, R.L. Graham, R.M. Wilson. Quasi-random graphs. Combinatorica 9:345--362, 1989.

\bibitem{CFZ} D. Conlon, J. Fox, Y. Zhao. Extremal results in sparse pseudorandom graphs. Advances in Mathematics 256:206--290, 2014.

\bibitem{DKL} Z. Dvir, J. Koll\'ar, S. Lovett. Variety evasive sets. Computational Complexity 23(4):509--529, 2014.

\bibitem{E} P. Erd\H{o}s. On sequences of integers no one of which divides the product of two others and related problems. Mitt. Forsch. Institut Mat. und Mech. Tomsk 2:74--82, 1938.

\bibitem{ESi} P. Erd\H{o}s, M. Simonovits. A limit theorem in graph theory. Studia Scientiarum Mathematicarum Hungarica 1:215--235, 1966.

\bibitem{ESt} P. Erd\H{o}s, A.H. Stone. On the structure of linear graphs. Bulletin of the American Mathematical Society 52:1087--1091, 1946.

\bibitem{FPSSZ} J. Fox, J. Pach, A. Sheffer, A. Suk, J. Zahl. A semi-algebraic version of Zarankiewicz's problem. Journal of the European Mathematical Society 19(6):1785--1810, 2017.

\bibitem{F1} Z. F\"uredi. New asymptotics for bipartite Tur\'an numbers. Journal of Combinatorial Theory, Series A 75(1):141--144, 1996.

\bibitem{F2} Z. F\"uredi. An upper bound on Zarankiewicz'problem. Combinatorics, Probability and Computing 5(1):29--33, 1996.

\bibitem{G} C. Grosu. A note on projective norm graphs. International Journal of Number Theory  14(1):55-62, 2018.

\bibitem{HW} G. H. Hardy, E. M. Wright. An introduction to the theory of numbers. Oxford University Press, 1975.

\bibitem{KRS} Y. Kohayakawa, V. R\"odl, P. Sissokho. Embedding graphs with bounded degree in sparse pseudorandom graphs. Israel Journal of Mathematics 139(1):93--137, 2004.

\bibitem{KRSz} J. Koll\'ar, L. R\'onyai, T. Szab\'o. Norm-graphs and bipartite Tur\'an numbers. Combinatorica 16:399--406, 1996.

\bibitem{KMV} A. Kostochka, D. Mubayi, J. Verstra\"ete. Tur\'an problems and shadows III: expansions of graphs. SIAM Journal on Discrete Mathematics 29(2): 868--876, 2015.

\bibitem{KPR} A. Kostochka, P. Pudl\'ak, V. R\"odl. Some constructive bounds on Ramsey numbers. Journal of Combinatorial Theory, Series B 100:439--445, 2010.

\bibitem{KST} T. K\H{o}v\'ari, V. T. S\'os, P. Tur\'an. On a problem of K. Zarankiewicz. Colloquium Mathematicae 3(1):55--57, 1954.

\bibitem{KS} M. Krivelevich, B. Sudakov. Pseudo-random Graphs. In: E. Gy\H{o}ri, Gy.O.H. Katona, L. Lov\'asz, T. Fleiner (eds.), More Sets, Graphs and Numbers: A Salute to Vera S\'os and Andr\'as Hajnal, Bolyai Society Mathematical Studies 15, Springer-Verlag, Berlin, Heidelberg, 2006. pp. 199--262.

\bibitem{K} A.G. Kurosh. Higher Algebra. MIR Publishers, 1972.

\bibitem{L} H. W. Lenstra. Automorphisms of finite fields. Journal of Number Theory 34(1):33--40, 1990.

\bibitem{LM} F. Lazebnik, D. Mubayi. New lower bounds for Ramsey numbers of graphs and hypergraphs. Advances in Applied Mathematics 28(3):544--559, 2002.

\bibitem{LN} R. Lidl, H. Niederreiter. Introduction to finite fields and their applications. Cambridge University Press, 1986.

\bibitem{Ma} J. Ma. On edges not in monochromatic copies of a fixed bipartite graph. Journal of Combinatorial Theory, Series B 123:240--248, 2017.

\bibitem{Man} W. Mantel. Problem 28. Winkundige Opgaven 10:60--61, 1907.

\bibitem{MS} E. L. Monte Carmelo, J. Sanches. Multicolored set multipartite Ramsey numbers. Discrete Mathematics 339:2775--2784, 2016.

\bibitem{Mu1} D. Mubayi. Some exact results and new asymptotics for hypergraph Tur\'an numbers. Combinatorics, Probability and Computing 11(3): 299--309, 2002.

\bibitem{Mu2} D. Mubayi. Coloring with three-colored subgraphs. Journal of Graph Theory 42(3):193--198, 2003.

\bibitem{MW} D. Mubayi, J. Williford. On the independence number of the Erd\H{o}s-R\'enyi and projective norm graphs and a related hypergraph. Journal of Graph Theory 56(2):113--127, 2007.

\bibitem{Nik1} V. Nikiforov. A contribution to the Zarankiewicz problem. Linear Algebra and its Applications 432:1405--1411, 2010.

\bibitem{Nik2} V. Nikiforov. Some new results in extremal graph theory. In: Robin Chapman (ed.), Surveys in Combinatorics 2011, LMS Lecture Note Series 392, Cambridge University Press, 2011. pp. 141--182.

\bibitem{Nic} B. Nica. Unimodular graphs and Eisenstein sums. Journal of Algebraic Combinatorics 45(2):423--454, 2017.

\bibitem{PST} J. Pach, J. Spencer, G. T\'oth. New Bounds on Crossing Numbers. Discrete and Computational Geometry 24(4)623--644, 2000.

\bibitem{PTTW} C. Palmer, M. Tait, C. Timmons, A.Zs. Wagner. Tur\'an numbers for Berge-hypergraphs and related extremal problems. Discrete Mathematics 342(6):1553--1563, 2019.

\bibitem{PT} X. Peng, C. Timmons. Infinite Tur\'an Problems for Bipartite Graphs. SIAM Journal of Discrete Mathematics 28(2):702--710, 2014.

\bibitem{P} V.V. Prasolov. Polynomials. Springer-Verlag, Berlin, Heidelberg, 2004.

\bibitem{PR} G. Perarnau, B. Reed. Existence of spanning $F$-free subgraphs with large minimum degree. Combinatorics, Probability and Computing 26(3):448--467, 2017.

\bibitem{R} J.J. Rotman. The Theory of Groups: an Introduction. Allyn and Bacon Inc., 1973.

\bibitem{RSV} J. Ru\'e, O. Serra, L. Vena. Counting configuration-free sets in groups. European Journal of Combinatorics 66:28---307, 2017.

\bibitem{SVe} B. Sudakov, J. Verstra\"ete. Cycle lengths in sparse graphs. Combinatorica. 28(3):357--372, 2008.

\bibitem{SVo} B. Sudakov, J. Vondr\'ak. A randomized embedding algorithm for trees. Combinatorica 30(4)445--470, 2010.

\bibitem{Sz} T. Szab\'o. On the spectrum of projective norm-graphs. Information Processing Letters 86(2):71-74, 2003.

\bibitem{Th1} A. Thomason. Pseudo-random graphs. Annals of Discrete Mathematics 33:307--331, 1987.

\bibitem{Th2} A. Thomason. Random graphs, strongly regular graphs and pseudo-random graphs. In: C. Whitehead (ed.), Surveys in Combinatorics 1987, LMS Lecture Note Series 123, Cambridge University Press, 1987. pp. 173--195.

\bibitem{Tu} P. Tur\'an. On an extremal problem in graph theory. Matematikai \'es Fizikai Lapok 48:436--452, 1941.

\bibitem{Ve} J. Verstra\"ete. Product representations of polynomials. European Journal of Combinatorics 27:1350--1361, 2006.

\bibitem{Vi} L.A. Vinh. The sovability of norm, bilinear and quadratic equations over finite fields via spectra of graphs. Forum Mathematicum 26(1):141--175, 2014.

\bibitem{WLi} Y. Wang, Y. Li. Bounds for Bipartite Rainbow Ramsey Numbers. Graphs and Combinatorics 33(4):1065--1079, 2017.

\bibitem{WLin} X. Wang, Q. Lin. Multicolor bipartite Ramsey numbers of $K_{t,s}$ and large $K_{n,n}$. Discrete Applied Mathematics 213:238--242, 2016.

\end{thebibliography}
\end{document}